\newtheorem{MAINtheorem}{Theorem}
\newtheorem{proposition}{Proposition}[section]
\newtheorem{lemma}[proposition]{Lemma}
\newtheorem{theorem}[proposition]{Theorem}
\theoremstyle{definition}
\newtheorem{definition}[proposition]{Definition}
\newtheorem{example}[proposition]{Example} 
\newtheorem{remark}[proposition]{Remark}
\numberwithin{equation}{section}
\renewcommand\subsubsection{\@startsection{subsubsection}{3}%
\normalparindent{.5\linespacing\@plus.7\linespacing}{-.5em}
{\normalfont\bfseries}}
\def\@tocline#1#2#3#4#5#6#7{\relax
  \ifnum #1>\c@tocdepth 
  \else
    \par \addpenalty\@secpenalty\addvspace{#2}%
    \begingroup \hyphenpenalty\@M
    \@ifempty{#4}{%
      \@tempdima\csname r@tocindent\number#1\endcsname\relax
    }{%
      \@tempdima#4\relax
    }%
    \parindent\z@ \leftskip#3\relax \advance\leftskip\@tempdima\relax
    \rightskip\@pnumwidth plus4em \parfillskip-\@pnumwidth
    #5\leavevmode\hskip-\@tempdima
      \ifcase #1
       \or\or \hskip 1em \or \hskip 2em \else \hskip 3em \fi%
      #6\nobreak\relax
    \dotfill\hbox to\@pnumwidth{\@tocpagenum{#7}}\par
    \nobreak
    \endgroup
  \fi}
\newcommand\eps{\varepsilon}
\newcommand\e{{\rm e}}
\newcommand\dd{{\rm d}}
\newcommand\Ker{{\rm Ker}}
\newcommand\Ran{{\rm Ran}}
\newcommand\Id{{\rm Id}}
\def\Re{{\rm Re}}
\newcommand{\dist}{{\rm dist}}
\newcommand{\RR}{\mathbb{R}}
\newcommand{\NN}{\mathbb{N}}
\newcommand{\CC}{\mathbb{C}}
\newcommand{\TT}{\mathbb{T}}
\newcommand{\ZZ}{\mathbb{Z}}
\newcommand{\JJ}{\mathbb{J}}
\newcommand{\jj}{\boldsymbol{j}}
\newcommand{\cI}{\mathcal{I}}
\newcommand{\cL}{\mathcal{L}}
\newcommand{\cS}{\mathcal{S}}
\newcommand{\fU}{\mathfrak{U}}
\newcommand{\bk}{\boldsymbol{k}}
\newcommand{\ini}{{\rm in}}
\newcommand{\supp}{{\rm supp}}
\newcommand\Uabc{U_{\mathsf a \mathsf b \mathsf c}}
\newcommand\hUabc{\widehat{U}_{\mathsf a \mathsf b \mathsf c}}
\newcommand\sfa{\mathsf{a}}
\newcommand\sfb{\mathsf{b}}
\newcommand\sfc{\mathsf{c}}
\begin{document}
\title[Alpha-unstable flows and the fast dynamo problem]{\vspace*{-3cm}Alpha-unstable flows and the fast dynamo problem} 

\author[M. Coti Zelati]{Michele Coti Zelati}
\address{Department of Mathematics, Imperial College London}
\email{m.coti-zelati@imperial.ac.uk}

\author[M. Sorella]{Massimo Sorella}
\email{m.sorella@imperial.ac.uk}

\author[D. Villringer]{David Villringer}
\email{d.villringer22@imperial.ac.uk}

\subjclass[2020]{35Q35, 34L05, 47A55, 76E25}

\keywords{Fast dynamo, alpha-effect, exponential growth, spectral perturbation, Bloch theorem}

\begin{abstract}
We construct a time-independent, incompressible, and Lipschitz-continuous velocity field in $\mathbb{R}^3$ that generates a fast kinematic dynamo - an instability characterized by exponential growth of magnetic energy, independent of diffusivity. Specifically, we show that the associated vector transport-diffusion equation admits solutions that grow exponentially fast, uniformly in the vanishing diffusivity limit $\varepsilon\to0$. Our construction is based on a periodic velocity field $U$ on $\mathbb{T}^3$, such as an Arnold–Beltrami–Childress flow, which satisfies a generic spectral instability property called alpha-instability, established via perturbation theory. This provides a rigorous mathematical framework for the alpha-effect, a mechanism conjectured in the late 1960s to drive large-scale magnetic field generation. By rescaling with respect to $\varepsilon$ and employing a Bloch-type theorem, we extend the solution to the whole space. Finally, through a gluing procedure that spatially localizes the instability, we construct a globally defined velocity field $u$ in $\mathbb{R}^3$
that drives the dynamo instability.
\end{abstract}


\maketitle

\tableofcontents

\section{The fast dynamo problem}\label{sec:intro}

The generation and maintenance of magnetic fields in astrophysical and geophysical settings are often attributed to dynamo action, where fluid motion amplifies magnetic fields through the process of electromagnetic induction. At the heart of this phenomenon lies the \emph{kinematic dynamo equation}, derived from Maxwell’s equations and Ohm’s law in a  non-relativistic    moving conductor, which governs the evolution of a divergence-free magnetic field   $B^\eps:[0,\infty)\times\RR^3\to\RR^3$ and is written as
\begin{equation} \label{passive-vector}\tag{KDE}
\begin{cases}
\partial_t B^\eps=\nabla \times (u\times B^\eps)+\eps \Delta B^\eps ,\\
\nabla \cdot B^\eps =0,\\
B^\eps|_{t=0} = B^\eps_{\ini},
\end{cases}
\qquad (t,x)\in (0,\infty)\times \RR^3.
\end{equation}
In this work, $u:\RR^3\to\RR^3$ is a given, divergence-free and time-independent velocity field, and $\eps>0$ is the magnetic diffusivity, inversely proportional to the magnetic Reynolds number.

A central challenge in this field is understanding the mechanisms behind fast dynamos - flows capable of sustaining magnetic field growth at rates independent of magnetic diffusivity. Mathematically, this corresponds to an $\eps$-independent exponential growth of the $L^2$ norm of $B^\eps$, also known as the total magnetic energy. In precise mathematical terms, a bounded Lipschitz continuous velocity field $u$ is a \emph{kinematic dynamo} on a domain $\mathcal{M}\subset \RR^3$ if for any $\eps >0$ there exists  $B_\ini^\eps \in L^2 (\mathcal{M})$ such that the corresponding solution  of \eqref{passive-vector}, endowed with physical boundary condition if $\partial\mathcal{M}\neq \emptyset$,   satisfies
$$ 
\gamma_\eps:= \limsup_{t\to\infty}\frac1t \log \| B^\eps (t) \|_{L^2(\mathcal{M})}>0.
$$
If $u$ is a kinematic dynamo and $\displaystyle\liminf_{\eps\to0} \gamma_\eps >0$, then $u$ is a \emph{fast} dynamo, otherwise it is said to be a \emph{slow} dynamo.
The question, originally posed by Ya.B. Zeldovich and A.D. Sakharov in the 1970s, asks whether there exists a divergence-free velocity field in $\mathcal{M}$ that is a fast dynamo  (see \cite{AK98}*{Chapter V}  and \cite{Arnold04}*{Pb. 1994-28}).   We resolve this problem in the case $\mathcal{M}=\RR^3$.
\begin{MAINtheorem} \label{thm:main}
There exist an autonomous, divergence-free velocity field $u \in W^{1, \infty} (\RR^3)$ and constants $\gamma, c_0>0$  with the following property. For any $\eps \in (0,1)$ there exists a non-zero, divergence-free initial datum $B^\eps_{\ini}  \in L^2 (\RR^3)$ such that the corresponding solution 
of \eqref{passive-vector}  satisfies 
$B^\eps\in L^\infty_{loc}(0,\infty;L^2(\RR^3))\cap L^2_{loc}(0,\infty;\dot{H}^1(\RR^3))$ and 
\begin{equation}\label{eq:dynamoaction}
    \| B^\eps (t) \|_{L^2(\RR^3)} \geq c_0 \e^{\gamma t} \| B^\eps_{\ini}  \|_{L^2 (\RR^3)},
\end{equation} 
for every $t\geq 0$.
\end{MAINtheorem}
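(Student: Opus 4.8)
The plan follows the three stages announced in the abstract.

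\emph{Stage 1 (periodic model and the alpha tensor).} Fix a smooth, divergence-free, $2\pi$-periodic field $U$ on $\TT^3$ — an ABC flow $\Uabc$ for a suitable (generic) choice of $\sfa,\sfb,\sfc$ — and consider, via the Bloch transform $B=\e^{i\bk\cdot x}b$ with $b$ periodic, the family $\{\cL_\bk\}_{\bk\in\RR^3}$ of operators obtained from the unit-diffusivity kinematic operator for $U$ (the diffusivity normalization is the role of the rescaling below); the constraint $\nabla\cdot B=0$ becomes $(\nabla+i\bk)\cdot b=0$. Since the mean of any solution of \eqref{passive-vector} is conserved, $\cL_0$ acts trivially on the three-dimensional quotient by mean-zero fields, and analytic perturbation theory in $\bk$ reduces the eigenvalues of $\cL_\bk$ near $0$ to those of a $3\times3$ effective matrix $\cM(\bk)$, which to leading order is $i\,\bk\times(\alpha\,\cdot\,)$ — here the $3\times3$ pseudotensor $\alpha$, the \emph{alpha tensor}, is given by explicit cell-problem formulas involving $\cL_0^{-1}$ — plus a dissipative $O(|\bk|^2)$ remainder (eddy diffusivity). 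Say $U$ is \emph{alpha-unstable} if $i\,\hat\bk\times(\alpha\,\cdot\,)$ has an eigenvalue with strictly positive real part for some $\hat\bk\in S^2$; this is an open condition on $U$, it fails precisely when $U$ has a reflection symmetry, and — the main analytic input — it holds for generic ABC flows, established by perturbation theory: one computes $\alpha$ in an explicit reference regime, exhibits a direction $\hat\bk$ with the desired property, and concludes by openness. Granting alpha-instability, for $\bk=s\hat\bk$ with $0<s\le s_0$ the operator $\cL_\bk$ has a simple eigenvalue $\lambda(\bk)$, analytic in $\bk$, with $\Re\lambda(\bk)\ge\gamma_0 s$, and the property persists for all diffusivities in an interval around $1$.

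\emph{Stage 2 (rescaling and large periodic tori).} For $A,\ell>0$ the field $A\,U(\cdot/\ell)$, inserted into \eqref{passive-vector} with diffusivity $\eps$, becomes after $x=\ell y$, $t=(\ell/A)\tau$ the periodic problem $\partial_\tau\tilde B=\nabla_y\times(U\times\tilde B)+\tfrac{\eps}{A\ell}\Delta_y\tilde B$; choosing $A\ell$ comparable to $\eps$ normalizes the effective diffusivity to the instability interval of Stage 1, while the choice $A\approx\sqrt\eps$, $\ell\approx\sqrt\eps$ keeps $\|A\,U(\cdot/\ell)\|_{L^\infty}$ and $\|\nabla(A\,U(\cdot/\ell))\|_{L^\infty}$ bounded uniformly in $\eps\in(0,1)$ and makes $A\,U(\cdot/\ell)$ vanish as $\eps\to0$. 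On the torus of period $2\pi N\ell$, that is $N$ periods of the rescaled flow, the operator splits by Bloch–Floquet into $\bigoplus_{\bk\in\frac1N\ZZ^3/\ZZ^3}\cL_\bk$; since $\tfrac1N\ZZ^3$ meets every cone around $\hat\bk$ once $N$ is large, there is an admissible momentum $\bk_N$ with $|\bk_N|$ small and $\bk_N/|\bk_N|$ near $\hat\bk$, hence a genuine eigenvalue $\lambda(\bk_N)$ of the torus operator with $\Re\lambda(\bk_N)\ge\tfrac{\gamma_0}{2}|\bk_N|>0$. Undoing the rescaling, this yields, for a fixed large $N$, exponential growth at a fixed rate $\gamma_1>0$ independent of $\eps$ (the time factor $\ell/A$ being $\approx1$).

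\emph{Stage 3 (gluing into one $\eps$-independent field).} Fix $N$ and a sequence $\eps_n\downarrow0$; set $\ell_n\approx A_n\approx\sqrt{\eps_n}$ and $u:=\sum_n u_n$, where $u_n$ is a divergence-free cutoff (obtained by truncating a vector potential) of a translate of $A_n\,U(\cdot/\ell_n)$ supported in a cube $Q_n$ containing about $N$ periods, the $Q_n$ pairwise disjoint and marching off to infinity. Then $u\in W^{1,\infty}(\RR^3)$ is divergence-free, with $W^{1,\infty}$-norm uniform in $n$, and $u\to0$ at infinity. For any fixed $\eps\in(0,1)$, the far boxes $Q_n$ with $\eps_n\ll\eps$ are strongly diffusion-damped — the local diffusion eigenvalues on $Q_n$ scale like $\eps/\eps_n\to\infty$ — so the essential spectrum of the operator $\cL_\eps$ associated with $u$ is $(-\infty,0]$, and any spectrum in $\{\Re>0\}$ consists of genuine isolated $L^2(\RR^3)$-eigenvalues. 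Picking $n=n(\eps)$ with $\eps_n$ comparable to $\eps$, the configuration inside $Q_{n}$ is, up to the rescaling of Stage 2, the large periodic torus carrying the unstable eigenvalue; a spectral-perturbation argument — decoupling $Q_n$ from the dissipative exterior and from the remaining (far, strongly damped) boxes — produces a genuine eigenvalue $\lambda^\eps$ of $\cL_\eps$ with $\Re\lambda^\eps\ge\gamma:=\gamma_1/2>0$, uniformly in $\eps$. Taking $B^\eps_\ini$ to be a corresponding eigenfunction gives $B^\eps(t)=\e^{\lambda^\eps t}B^\eps_\ini$, hence \eqref{eq:dynamoaction} with $c_0=1$; well-posedness of \eqref{passive-vector} for $W^{1,\infty}$ drift and the regularity $B^\eps\in L^\infty_{loc}(0,\infty;L^2)\cap L^2_{loc}(0,\infty;\dot H^1)$ follow from the standard energy method.

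\emph{Main obstacle.} The weight of the argument lies in two places. First, Stage 1: showing that $i\,\hat\bk\times(\alpha\,\cdot\,)$ has an eigenvalue with positive real part for some $\hat\bk$ is a genuinely non-degenerate property — forbidden by any reflection symmetry — so it requires the perturbative construction of $\alpha$ with enough control of $\cL_0^{-1}$, an honest computation in one explicit flow, and an openness argument. Second, the spectral-perturbation step of Stage 3: upgrading the large-torus eigenvalue to a genuine $L^2(\RR^3)$-eigenvalue means controlling the resolvent of the glued, non-self-adjoint operator near the unstable value uniformly in $\eps$ — quantifying that the dissipative exterior and the mismatched boxes cannot reabsorb the instability — and this is precisely what makes the growth genuinely exponential for \emph{all} $t\ge0$ rather than only on bounded time intervals.
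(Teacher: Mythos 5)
Your Stage 1 is essentially the paper's route (modal/Bloch ansatz, reduction to a $3\times3$ matrix by perturbation theory in $|\jj|$, explicit verification for ABC flows, persistence in the diffusivity), but Stages 2--3 deviate, and the deviation contains a genuine gap. The paper never produces, and never needs, a spectral object for the glued operator: it builds for the \emph{un-truncated} periodic rescaled field a finite-energy, exponentially growing \emph{wave packet} on $\RR^3$ by superposing a continuum of Bloch momenta (the Plancherel-type Lemma \ref{Lemma: Bloch1}, giving Theorem \ref{thm:Section2} together with a uniform-in-$\eps$ spatial concentration of the data), and then transfers the growth to the glued, compactly supported field by purely \emph{local, time-windowed energy estimates} (Lemma \ref{lemma:concentrationSummary} and Proposition \ref{prop:main}): on each unit time interval the growth is exhibited in one designated ball, the truncation error, the exterior blocks and the tails are all shown to be exponentially small relative to the growth thanks to the choice of radii $R_{n,\ell}$ and cutoffs with gradients of size $\fU^{-1}\e^{-(n+1)-\fU(\ell+1)}$, and the $1/\eps$ loss in the stability estimate is absorbed because $\eps\geq\zeta^{n+1}$ with $\zeta>1/\e$. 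The price of this route is that the initial datum is a weighted sum of translated packets (one per time window) rather than an eigenfunction, and $c_0<1$.

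The gap in your plan is precisely the step you yourself flag as the main obstacle: upgrading the large-torus eigenvalue to a genuine isolated eigenvalue of the glued operator on $L^2(\RR^3)$, uniformly in $\eps$. Cutting off the velocity to a box of $N$ periods turns the rational-momentum Bloch eigenfunction into only a quasimode of the non-self-adjoint operator $\cL_\eps$ on $\RR^3$, and for non-self-adjoint operators quasimodes do not imply nearby spectrum without resolvent bounds -- which is exactly the pseudospectral/singular-perturbation difficulty the paper's introduction points out as unresolved (cf. the discussion of \cite{Albritton_Brué_Colombo_2022}, \cite{Chicone_Latushkin_Montgomery-Smith_1995}, \cite{Soward_1987}). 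Your proposal supplies no mechanism for this resolvent control; it is asserted as ``a spectral-perturbation argument''. Moreover the preliminary claim that the essential spectrum of $\cL_\eps$ is $(-\infty,0]$ is itself doubtful as stated: although $u\to0$ at infinity, $\nabla u$ does \emph{not} decay (amplitude and period both scale like $\sqrt{\eps_n}$, so the stretching coefficient is $O(1)$ in every far box), so the drift is not a relatively compact perturbation vanishing at infinity, and ruling out essential spectrum with positive real part would require an additional quantitative damping argument in the far boxes. Finally, the uniform-in-$\eps$ concentration of the unstable data (property \eqref{eq:uniform-ini}), which in the paper is what makes the localization errors controllable uniformly in $\eps$, has no counterpart in your plan; if you abandon the eigenfunction claim and fall back on a localized-growth argument, you will need exactly such a concentration statement, which for a wave packet requires the continuity of $H(\cdot;\jj,\eps)$ in $\eps$ (Lemma \ref{lemma:continuity of resolvent}) or an analogous compactness input.
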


This result provides an example of \emph{fast dynamo action} by a Lipschitz, time-independent velocity field on the whole space $\RR^3$. The exponential growth \eqref{eq:dynamoaction} is the fastest possible that a finite-energy initial datum can experience under the dynamics of \eqref{passive-vector} with a Lipschitz velocity field.

While we outline the proof of Theorem \ref{thm:main} in Section \ref{sub:strategy}, we mention that the construction of $u$ is based on the concept of \emph{alpha-unstable flows} on the periodic domain $\TT^3$, defined as follows.

\begin{definition}[Alpha-unstable flow] \label{def:alpha}
Let $U \in C^\infty (\TT^3)$ be a mean-free, incompressible velocity field  with $\|\nabla U\|_{L^\infty}<1$. For $v\in \CC^3$, consider the solution map $\cS:\CC^3\to L^2(\TT^3)$, $v\mapsto \cS(v)= S$ of the elliptic problem
\begin{equation} \label{eq:ellipticKernel}
            \nabla \times (U \times S)+\Delta  S =  \nabla \times ( v \times U ),\qquad 
            \int_{\TT^3} S(x)\dd x =0,
    \end{equation}
and define the $\CC^{3\times 3}$ matrix $A=A(U,\jj)$ via
\begin{equation}\label{eq:Amatrix}
Av=i \frac{\jj}{|\jj|} \times \fint_{\TT^3}  U \times \cS(v),\qquad \jj\in \RR^3\setminus\{0\}, \qquad v\in \CC^3.
\end{equation}
We say that $U$ is \emph{alpha-unstable} if there exist a nonzero $\jj\in \RR^3$ and a simple eigenvalue $\mu \in \CC$ of $A$ with $\Re (\mu) >0$.
\end{definition}

The assumption $\| \nabla U \|_{L^\infty} < 1$ in Definition \ref{def:alpha} is only needed to ensure existence and uniqueness of solutions to equation \eqref{eq:ellipticKernel}, so that $\cS$ is well-defined. 
Although the matrix in \eqref{eq:Amatrix} may seem obscure at first, it arises naturally when analyzing the spectrum of the linear operator governing the dynamics of \eqref{passive-vector}, as well as its perturbations. 
It also connects with the so-called alpha-effect, as we explain in the next Section \ref{sub:alpha}. To the best of our knowledge, this is the first rigorous justification of this effect for \eqref{passive-vector}.

Alpha-unstable flows can be proved to be generic in the space of smooth, mean-free and incompressible velocities (c.f. Proposition \ref{prop:alphaDensity}). However, their existence does not immediately imply fast dynamo action on $\TT^3$. This remains to this date an outstanding open problem, see \cite{Arnold04}*{Pb. 1994-28}. 

\subsection{Fast dynamos, chaotic flows and anti-dynamos}
The idea that fluid motion in a conducting medium could amplify magnetic fields dates back to Larmor’s 1919 work \cite{larmor1919possible}, which proposed dynamo action as the mechanism behind the Sun’s magnetic field. However, a rigorous mathematical framework for dynamos only emerged in the mid-20th century.

The full dynamo problem is governed by the nonlinear magnetohydrodynamics (MHD) equations, where \eqref{passive-vector} is coupled to the Navier-Stokes system describing fluid motion. In this setting, exponential magnetic energy growth cannot persist indefinitely, as it would violate energy bounds. Instead, one seeks a mechanism for finite-time transfer from kinetic to magnetic energy. While extensive studies exist in applied settings (see \cites{CG95,AK98,brandenburg2005astrophysical,Moffatt78,rudiger2006magnetic}), only a few mathematical works have addressed this problem \cites{FV91,Gerardvaret05,GR07,Vishik86}.

Neglecting the Lorentz force’s feedback on the velocity field leads to the kinematic dynamo equation \eqref{passive-vector}, an approximation valid when the magnetic field is weak - such as in the initial phase of seed field amplification driven by differential rotation. In this setting, searching for exponentially growing solutions is both feasible and an outstanding challenge.

A remarkable result in this direction is provided by Vishik \cite{Vishik89}, who showed that fast dynamo action implies that the (non-diffusive) Lagrangian top Lyapunov exponent of $u$ is positive. Chaotic velocity fields can be constructed using a random dynamical systems approach \cites{CZNF24,BBPS22}. In particular, \cite{CZNF24} demonstrates that a simple randomization of ABC flows yields a universal \emph{ideal} dynamo - i.e., a velocity field for which any nontrivial $L^2$ initial datum leads to an exponentially growing solution of \eqref{passive-vector} in the case $\eps=0$.
However, in general, treating the case $\eps>0$ as a perturbation of $\eps=0$ is a highly non-trivial matter. The Laplacian is singular with respect to the vector-transport operator ($\eps=0$), rendering the spectral theory outlined in Appendix \ref{appendix} inapplicable. Moreover, existing techniques for establishing spectral continuity under singular perturbations - such as those employed in \cite{Albritton_Brué_Colombo_2022} - require the unperturbed operator to have a non-empty discrete spectrum. Yet, as shown in \cite{Chicone_Latushkin_Montgomery-Smith_1995}, the ideal dynamo operator has a spectrum consisting of a single vertical strip, implying that its discrete spectrum is empty. Thus, whether the existence of an ideal dynamo necessarily implies the existence of a fast dynamo remains unclear. In fact, results such as those in \cite{Soward_1987} suggest that this implication fails in general.

Another key difficulty in dynamo theory is its resistance to simplification. Throughout the 20th century, various \emph{anti-dynamo theorems} ruled out dynamo action for specific velocity and magnetic field configurations. Notably, Zeldovich’s theorem \cites{zeldovich1980magnetic,Zeldovich_1992} states that a velocity field with zero vertical component cannot sustain a dynamo, while Cowling’s theorem \cite{Cowling33} rules out axisymmetric solutions. These results highlight that a functioning dynamo requires a genuinely three-dimensional magnetic field \cite{AK98}. The only examples of fast dynamo on $\RR^3$ that we are aware of are the following:

\begin{itemize}
    \item In \cite{ZRMS84}, with a velocity field of the form $u(t,x)=C(t)x$, where $C(t)$ is a traceless random matrix. This construction relies heavily on the \emph{unboundedness} of the velocity field, since it requires the support of the initial datum to grow exponentially. This cannot happen with a bounded velocity field as in our Theorem \ref{thm:main};
    \item In \cite{Gilbert88}, with the so-called Ponomarenko model. This is a \emph{discontinuous} helical flow, for which exponential growth happens in a neighborhood of the discontinuity.
\end{itemize}

\subsection{The alpha-effect}\label{sub:alpha}
The concept of \emph{alpha-instability} in Definition \ref{def:alpha} is named after classical works that sought to exploit the so-called \emph{alpha-effect} - a mechanism introduced to circumvent anti-dynamo theorems while avoiding excessive complexity. The most relevant work for our study is Roberts’ 1970 analysis of dynamo action in periodic velocity fields  \cite{Roberts70}, later refined in \cite{Roberts72}. Roberts considered a periodic magnetic field $B(t,x_1,x_2)$ that, by itself, cannot generate a dynamo due to the anti-dynamo theorems. However, introducing a slow variation in the $x_3$-direction allows the system to escape this constraint, with the alpha-effect determining the extent of dynamo action. While highly effective, Roberts’ approach did not yield quantitative bounds on the dynamo growth rate in terms of the magnetic resistivity 
$\eps$. In particular, it remained unclear whether dynamo action persisted in the limit 
$\eps\to0$, see further studies \cites{Childress_1979,Soward_1987}.

The importance of the alpha-effect in the dynamo problem can be at justified heuristically following \cites{Roberts70,Roberts72,Moffatt78,Childress_1979,CG95}. The crucial mechanism underlying the alpha-effect is the interaction between small and large scales. Specifically, if the velocity field $U$ is concentrated at high frequencies (i.e., small scales) while the initial magnetic field is only at low frequencies (i.e., large scales), the term $\nabla \times (U \times B^\eps )$ in \eqref{passive-vector} induces the creation of high frequencies in the magnetic field. 

To clarify this mechanism, we introduce the two-scale approach in the context of the passive vector equation, first developed by Steenbeck, Krause, and Rädler \cites{SK69,SKR66}. We decompose the magnetic field into small and large scales components: 
\begin{equation}\label{eq:BFBS}
    B^\eps = B_S + B_L, \qquad 
    U = U_S \,,
\end{equation}
where $B_L$ represents the large-scale part, and $B_S$ represents the small-scale part. A common homogenization technique is to assume that $B^\eps$ has the two-scale structure $B^\eps(x)= B_L (x) + B_S (x, \lambda x )$, where $y = \lambda x$ is the small scales variable for $\lambda \gg 1$. Using this approach, the passive vector equation decomposes into separate equations as
\begin{align}
\begin{cases}
    \partial_t B_L = \nabla \times  \langle U_S \times B_S \rangle_y + \eps \Delta B_L \,, \label{eq:slow-dynamo}
    \\
    \partial_t B_S = \nabla \times (U_S \times B_L) + \nabla \times (U_S \times B_S - \langle U_S \times B_S \rangle_y) + \eps \Delta B_S \,,
\end{cases}
\end{align}
where $\langle \cdot  \rangle_y$ 
denotes averaging in the small scales.

The key term here is the mean electromotive force $\mathcal{E} =  \langle U_S \times B_S \rangle_y$, which plays a central role in dynamo theory by potentially sustaining the large scales part $B_L$. A crucial insight is that the growth of $B_L$ and $B_S$ reinforce each other \cite{Moffatt78}. In particular, it is predicted that the relation 
$$ 
\langle  U_S \times B_S \rangle_y \approx  \alpha (B_L)
$$
holds for some matrix $\alpha : \RR^3 \to \RR^3$, known as the \emph{alpha-matrix}. This is the so-called alpha-effect, from which the matrix derives its name.

If this analysis is correct, the first equation in \eqref{eq:slow-dynamo} on the Fourier side $x\mapsto\bk$ reads
$$
\partial_t \widehat{B}_L = i\bk \times  \alpha( \widehat{B}_L)  - \eps|\bk|^2 \Delta \widehat{B}_L.
$$
If the matrix $i  \bk  \times (\alpha (\cdot))$ has an eigenvalue $\mu$ with a positive real part for some
$\bk\in \RR^3\setminus\{0\}$, then the solution with corresponding eigenvector as initial datum exhibits exponential growth for sufficiently small 
$\eps$.
This heuristic argument, originating from \cites{Roberts70,Roberts72,Soward_1987,CG95,Childress_1979}, provides an important qualitative prediction about dynamo behavior. However, to the best of our knowledge, there are no rigorous justifications of this mechanism in a general setting. In Section \ref{sec:rescaled}, we provide a rigorous justification in a specific rescaled framework, where the notion of alpha-unstable flows naturally arises.
In our setting, the decomposition \eqref{eq:BFBS} approximately takes the form 
$$
B_S = \cS (v) \exp ( i \jj \cdot x + \mu t), \qquad B_L = v \exp (i \jj \cdot x + \mu t), \qquad \jj\in \RR^3\setminus \{0\},
$$
where $v\in \CC^3$ is related to the mean magnetic field. Under suitable assumptions, we obtain a solution to \eqref{passive-vector} up to lower-order terms in $|\jj| \ll 1$ using a perturbative  approach. 
The corresponding  alpha-matrix , up to the large-scale function $\exp (i \jj \cdot x+ \mu t)$, is given by
$$ 
\langle U_S \times B_S \rangle_y  \approx    \fint_{\TT^3} U \times \cS (v) 
$$
and appears in Definition \ref{def:alpha}. Alpha-instability can be explicitly verified for the classical ABC flows, and it is in fact a generic property, see Proposition \ref{prop:alphaDensity}.

 \subsection{Notation}

The symbol $Q_R(x)\subset \RR^3$ denotes the closed ball centered at $x\in \RR^3$ of radius $R>0$. When $x$ is the origin, we simply write $Q_R$. The solution semigroup generated  by \eqref{passive-vector} with vector field $u \in   W^{1, \infty} (\RR^3)$ is denoted  $S_\eps^u(t):L^2(\RR^3)\to L^2(\RR^3)$ and acts as
\begin{equation}\label{d:semigroup}
     B_{\ini}^\eps\mapsto S_\eps^u (t) B_{\ini}^\eps=B^\eps(t)\,.
\end{equation}
Furthermore, we shall denote by $C^\infty_c(\RR^3)$ the space of smooth functions with compact support in $\RR^3$. Given an operator $\cL$, the symbols $\sigma(\cL)$ and $\rho(\cL)$ denote the spectrum and resolvent set, respectively.

\subsection{Strategy of the proof}\label{sub:strategy}

The proof of Theorem \ref{thm:main} combines classical ideas originally introduced by G.O. Roberts in \cites{Roberts70,Roberts72}, a novel rescaling of both the velocity and magnetic fields to derive an $\eps$-independent eigenvalue problem, and the construction of a  velocity field on the whole space by appropriately gluing together rescaled building blocks.

\medskip

\emph{Modal form and the rescaled spectral problem.}
In 1970, G.O. Roberts \cite{Roberts70} proposed to approach the problem by looking at solutions with a particular structure that exhibits exponential growth, similar to the classical normal mode form used in countless linear problems. This constitutes the main idea to construct the \emph{building block} eventually used in our velocity field. We take a periodic, bounded and Lipschitz vector field $U:\TT^3\to\RR^3$,  rescale it as
\begin{equation}\label{eq:Ueps}
    U_\eps (x) = \sqrt\eps  U \left (\frac{x}{\sqrt\eps } \right) \in W^{1, \infty} (\TT^3),\qquad \|U_\eps\|_{W^{1,\infty}}\leq \|U\|_{W^{1,\infty}},
\end{equation}
and look initially for solutions to \eqref{passive-vector} with $u= U_\eps$ of the form
\begin{equation}\label{def:ansatz}
    B^\eps(t,x)= H \left (\frac{x}{\sqrt{\eps}} \right ) \e^{ i \eps^{- \frac12} \jj \cdot x + p t},
\end{equation}
for $p \in \CC$, $\jj\in \RR^3$, and $H:\TT^3\to \RR^3$. Notice that at this stage, $B^\eps$ may not be periodic, nor finite-energy, hence it is not the solution we find in Theorem \ref{thm:main}. However, a computation (c.f. Section \ref{sec:rescaled}) shows that $H$ solves the \emph{$\eps$-independent} eigenvalue problem on $\TT^3$
\begin{align} 
    (\nabla +i  \jj )\times (U \times H) +  (\nabla + i\jj)^2 H &=pH,\label{modal:passive-vector:rescaled} \\
    (\nabla + i \jj ) \cdot H &=0. \label{modal:div}
\end{align}
If $U$ exhibits the \emph{alpha-unstable} (c.f. Definition \ref{def:alpha}), then there exists a triple $(H,\jj,p)$ satisfying the above problem with $\Re (p) >0$.

\begin{proposition} \label{prop:growing}
Suppose that $U \in W^{1, \infty } (\TT^3)$ exhibits the  unstable alpha effect. Then
there exist $\jj_\star \in \RR^3$ with $|\jj_\star| \leq 1$, $p_\star= p(\jj_\star) \in \CC$ with $\Re (p_\star) >0 $ and  $H(\cdot;\jj_\star) \in L^2 (\TT^3)$ solution to \eqref{modal:passive-vector:rescaled}-\eqref{modal:div}.
\end{proposition}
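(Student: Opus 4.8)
The plan is to treat \eqref{modal:passive-vector:rescaled}--\eqref{modal:div} as a spectral perturbation of the case $\jj=0$ in the small parameter $|\jj|$, arranged so that the reduced finite-dimensional perturbation is exactly the matrix $A$ of Definition \ref{def:alpha}. Since $A(U,\jj)$ depends on $\jj$ only through $\jj/|\jj|$, pick a \emph{unit} vector $\jj_0\in\RR^3$ for which $A:=A(U,\jj_0)$ has a simple eigenvalue $\mu$ with $\Re\mu>0$ and eigenvector $w\in\CC^3$, and set $\jj=\delta\jj_0$ with $\delta>0$ small. Expanding the operator on the left of \eqref{modal:passive-vector:rescaled} gives $\cL_\delta:=\cL_0+\delta\cL_1-\delta^2\Id$ on $L^2(\TT^3;\CC^3)$, where $\cL_0 H=\nabla\times(U\times H)+\Delta H$ and $\cL_1 H=i\jj_0\times(U\times H)+2i(\jj_0\cdot\nabla)H$. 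I would first record that the divergence constraint \eqref{modal:div} may be ignored: if $\cL_\delta H=pH$ with $\Re p>0$, then $\phi:=(\nabla+i\delta\jj_0)\cdot H$ solves $(\nabla+i\delta\jj_0)^2\phi=p\phi$, and since $\sigma\bigl((\nabla+i\delta\jj_0)^2\bigr)\subset(-\infty,0]$ this forces $\phi=0$. Hence it suffices to produce an eigenfunction of $\cL_\delta$ whose eigenvalue has positive real part.

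Next I would analyze $\cL_0$. It equals $\Delta$ plus a first-order operator with $L^\infty$ coefficients, so it has compact resolvent and discrete spectrum, and $0\in\sigma(\cL_0)$ is an isolated, semisimple eigenvalue with three-dimensional eigenspace
\[
\Ker\cL_0=\{\,v+\cS(v)\ :\ v\in\CC^3\,\}.
\]
Indeed, writing $H=v+\tilde H$ with $v=\fint_{\TT^3}H$ and $\fint_{\TT^3}\tilde H=0$, the equation $\cL_0 H=0$ becomes $\nabla\times(U\times\tilde H)+\Delta\tilde H=\nabla\times(v\times U)$, whose unique mean-free solution is $\tilde H=\cS(v)$ by the well-posedness of \eqref{eq:ellipticKernel} (the hypothesis $\|\nabla U\|_{L^\infty}<1$ is used only here). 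Since $\fint_{\TT^3}\cL_0 H=0$ for every $H$ (it is a sum of derivatives of periodic functions) while $\fint_{\TT^3}(v+\cS(v))=v$, one gets $\Ker\cL_0^2=\Ker\cL_0$, and, for the Riesz projection $P_0$ onto $\Ker\cL_0$, also $\fint_{\TT^3}P_0 H=\fint_{\TT^3}H$.

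The family $\delta\mapsto\cL_\delta$ is holomorphic of type (A) (all $\cL_\delta$ have domain $H^2(\TT^3;\CC^3)$), so by the reduction of a semisimple isolated eigenvalue — equivalently, a Lyapunov--Schmidt argument on $L^2=\Ker\cL_0\oplus\Ran(\Id-P_0)$ — the eigenvalues of $\cL_\delta$ near $0$ are, for small $\delta$, the eigenvalues of an analytic $3\times 3$ matrix $M(\delta)$ with $M(0)=0$ and, in the coordinate $v\leftrightarrow v+\cS(v)$ on $\Ker\cL_0$, first variation $M'(0)=P_0\cL_1|_{\Ker\cL_0}$. The key computation is $M'(0)=A$: for $H_0=v+\cS(v)$,
\[
\fint_{\TT^3}\cL_1 H_0=i\jj_0\times\fint_{\TT^3}U\times\bigl(v+\cS(v)\bigr)=i\jj_0\times\fint_{\TT^3}U\times\cS(v)=Av,
\]
since $(\jj_0\cdot\nabla)H_0$ is mean-free and $\fint_{\TT^3}U\times v=(\fint_{\TT^3}U)\times v=0$ because $U$ is mean-free; combined with $\fint_{\TT^3}P_0(\cdot)=\fint_{\TT^3}(\cdot)$ and $P_0\cL_1 H_0\in\Ker\cL_0$ this yields $M'(0)=A$.

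To finish, write $M(\delta)=\delta\bigl(A+\delta B(\delta)\bigr)$ with $B$ analytic near $0$. Because $\mu$ is a simple eigenvalue of $A$, the matrix $A+\delta B(\delta)$ has an analytic branch of simple eigenvalues $\mu(\delta)=\mu+O(\delta)$ with eigenvector $w(\delta)=w+O(\delta)$, so $p(\delta):=\delta\mu(\delta)$ is an eigenvalue of $\cL_\delta$ whose eigenvector, lifted through the perturbed eigenprojection, is an eigenfunction $H(\cdot;\delta\jj_0)\in L^2(\TT^3)$. Choosing $\delta_\star\in(0,1]$ small enough that $\Re p(\delta_\star)=\delta_\star\Re\mu+O(\delta_\star^2)>0$, and setting $\jj_\star:=\delta_\star\jj_0$ (so $|\jj_\star|=\delta_\star\le 1$), $p_\star:=p(\delta_\star)$, the triple $(\jj_\star,p_\star,H(\cdot;\jj_\star))$ solves \eqref{modal:passive-vector:rescaled}, and by the first paragraph also \eqref{modal:div}. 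I expect the main obstacle to be the bookkeeping of the reduction step — verifying that the constant-vector ($v$) contributions assemble precisely into $A$ and that the constraint stays under control — rather than any deep difficulty, since Definition \ref{def:alpha} is engineered exactly so that $M'(0)=A$.
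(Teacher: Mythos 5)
Your proposal is correct and follows essentially the same route as the paper: perturb $\cL_0+|\jj|\cL_1-|\jj|^2$ around the three-dimensional semisimple kernel $\{v+\cS(v)\}$, reduce via the Riesz projection (using $\fint P_0H=\fint H$) to the matrix $A$ of Definition \ref{def:alpha}, and conclude $\Re p(\jj)>0$ for small $|\jj|$. Your only departures are cosmetic: you invoke analytic type (A) perturbation theory where the paper uses Kato's first-order asymptotics $p_\ell(\jj)=\mu_\ell|\jj|+o(|\jj|)$, and you verify \eqref{modal:div} directly on $\TT^3$ via $\sigma((\nabla+i\jj)^2)\subset(-\infty,0]$ rather than the paper's passage to $\RR^3$ and tempered distributions — both equally valid.
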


The proof of this result is inspired by the works of Roberts \cites{Roberts70,Roberts72} and relies on classical tools from perturbation theory \cite{K76}, which are briefly introduced in Appendix \ref{appendix}. The perturbation parameter in our analysis is $|\jj|\ll 1$, and the key step involves characterizing the kernel of the linear operator in \eqref{modal:passive-vector:rescaled} at  $\jj=0$. This kernel is three-dimensional, reducing the problem to studying the eigenvalues of a $3\times 3$ matrices. 
Explicit computations in Example \ref{ex:ABCflows} for the so-called ABC flows reveal a broad class of alpha-unstable flows for which Proposition \ref{prop:growing} holds. This insight motivates the construction of a fundamental \emph{building block} that plays a central role in the final vector field $u$ of Theorem \ref{thm:main}.

\medskip

\emph{A Bloch-type theorem and extension to $\RR^3$.}
As mentioned above, formula \eqref{def:ansatz} alone does not directly yield a finite-energy, exponentially growing solution of \eqref{passive-vector}. The issue arises from the $\jj$-dependent plane wave and the periodicity of $H$, which prevent square-integrability. To overcome this, we establish a Bloch-type result (see Lemma \ref{Lemma: Bloch1}), which, when combined with Proposition \ref{prop:growing}, ensures the existence of a finite-energy, exponentially growing solution of \eqref{passive-vector} on the whole space $\RR^3$:
\begin{equation}\label{eq:expgrowth2gamma}
\|B^\eps (t) \|_{L^2 (\RR^3)} \geq \e^{2 \gamma t} \| B_{\ini} \|_{L^2 (\RR^3)}
\end{equation}
where $\gamma >0$ is independent of $\eps$, and the velocity field is given by the periodic extension of the rescaled vector field $U_\eps$ in \eqref{eq:Ueps} - see Theorem \ref{thm:Section2}. This constitutes the main building block in proving Theorem \ref{thm:main}.
However, the rescaled velocity field $U_\eps$ in \eqref{eq:Ueps} still depends on $\eps>0$. To construct a velocity field $u$ that is independent of $\eps$, we glue together multiple translated and compactly supported copies of $U_\eps$ for a carefully chosen sequence of values $\eps \in \{ \zeta^n \}_{n \in \NN}$ with $\zeta \in (0,1)$. The details of this gluing procedure are described below.

A key challenge in this construction is ensuring that the exponential growth rate in \eqref{eq:growthF} remains \emph{uniform} in $\eps$ over each interval $(\zeta^{n+1},\zeta^n]$. Additionally, we must control the concentration of the initial data \eqref{eq:uniform-ini} uniformly in $\eps>0$. These considerations are crucial for maintaining robust control over the dynamics of \eqref{passive-vector} under the ``glued'' velocity field $u$.

\medskip

\emph{The glued vector field and heuristics of the proof.}
We now construct the $\eps$-independent glued vector field and outline the proof of Theorem \ref{thm:main}. The rescaled flow $U_\eps$ in \eqref{eq:Ueps}, derived from an alpha-unstable flow $U$ gives rise - thanks to Proposition \ref{prop:growing} and the Bloch-type result described above - to an exponentially growing solution for any
$\eps>0$, with growth rate $2 \gamma = \Re (p_\star) >0$  independent of $\eps$, see \eqref{eq:expgrowth2gamma}.

To construct an $\eps$-independent divergence-free vector field, we assemble rescaled and compactly supported copies of  $U$ using the sequence $\{ \zeta^n\}_{n \in \NN}$, defining
$$
u= \sum_{n, \ell =1}^\infty u_{n, \ell}\,, \qquad  u_{n, \ell} (x) \approx   \zeta^{n/2} U (\zeta^{-n/2} x) \mathbbm{1}_{Q_{n, \ell}} (x )  \,,
$$
for appropriately chosen disjoint balls $\{ Q_{n, \ell} \}_{n, \ell=1}^\infty$ (see \eqref{eq:AnsatzU} for the precise definition). At this stage, $\ell \in \NN$ is a free parameter. 

For any $\eps \in (0,1)$ and $\zeta \in (0,1)$, there exists a unique $n_\eps \in \NN$ so that $\eps \in (\zeta^{n_\eps +1}, \zeta^{n_\eps} ]$. Accordingly, we define the initial datum as
$$ 
B^\eps_{\ini}=\sum_{\ell=1}^\infty \ell^{-2} F^\eps_\ell 
$$
where each $F^\eps_\ell$ is \emph{concentrated} on $Q_{n_\eps,\ell}$ (see \eqref{eq:condition-initial} for a precise definition).

Now, for any  $t>0$, there exists an index $\ell_t \in \NN$ such that $t \in [\ell_t, \ell_t +1)$. We will show that the solution to \eqref{passive-vector} with initial data $B^\eps_{\ini}$ is \emph{quantitatively} exponentially exponentially large around
$Q_{n_\eps, \ell_t}$ for any  $t \in [\ell_t, \ell_t +1) $ and $\eps \in (\zeta^{n_\eps +1}, \zeta^{n_\eps} ]$, due to the velocity field $u_{n_\eps, \ell_t}$ and the contribution of the initial datum $F_{\ell_t}^{n_\eps}$. More precisely, in Proposition \ref{prop:main} we establish  estimates that imply
\begin{equation}\label{eq:chainineq}
\|S^u_\eps (t) B^\eps_\ini\|_{L^2(\RR^3)}
\geq \|S^u_\eps (t) B^\eps_\ini \|_{L^2(Q_{n_\eps,\ell_t})}  
\gtrsim \ell_t^{-2}\|S^{u_{n_\eps, \ell_t}}_\eps (t)F^\eps_{\ell_t}\|_{L^2(Q_{n_\eps,\ell_t})}
\gtrsim \e^{{\gamma (\ell_t +1)} } \| B_{\ini}^\eps \|_{L^2 (\RR^3)} \,,
\end{equation}
where the constant $\gamma >0$ as well as the implicit constant in $\gtrsim$ are independent of $t>0$ and $ \eps \in (0,1)$.   This chain of inequalities confirms that the solution exhibits exponential growth around the ball $Q_{n_\eps, \ell_t}$ for any  $t \in [\ell_t, \ell_t +1)$ and $\eps \in ( \zeta^{n_\eps +1}, \zeta^{n_\eps}]$. Since  $ t>0$ and $\eps \in (0,1)$ are arbitrary, we conclude the proof. To rigorously justify these estimates, we rely on suitable local energy bounds (see Section \ref{sec:energy-estimates}), together with the careful choice of disjoint balls
$\{ Q_{n, \ell}\}_{n, \ell =1}^\infty$ and appropriate cutoffs.

\section{Spectral analysis and modal form}\label{sec:rescaled}

We now state the main theorem we will prove in this section, which is a key tool used in the proof of Theorem \ref{thm:main} in Section \ref{sec:proof-thm}. 
\begin{MAINtheorem}\label{thm:Section2}
    Let $U = \nabla \times \Psi\in C^\infty(\TT^3)$ be an alpha-unstable velocity.
    Then,  there exist  $\zeta \in (1/2,1)$ and $\gamma>0$ such that 
\begin{equation} 
U_n (x) = \zeta^{n/2} U(\zeta^{-n/2}x), \qquad U_n =\nabla \times \Psi_n,
\end{equation}
satisfies the following properties.
    \begin{itemize}
        \item For every $\eps \in (0,1]$  there exists  $F^\eps\in L^2 (\RR^3)$, normalized to $\|F^\eps\|_{L^2(\RR^3)}=1$, such that if $\eps \in (\zeta^{n_\eps+1}, \zeta^{n_\eps}]$ for some $n_\eps\in\NN$  then
        \begin{equation}\label{eq:growthF}
        \|S^{U_{n_\eps}}_\eps(t) F^\eps\|_{L^2 (\RR^3)}\geq \e^{2\gamma t} \,, \quad \forall t \geq 0 \,.
        \end{equation}
        \item For every $\delta\in (0,1)$, there exists $R>0$ independent of $\eps$ such that 
        \begin{equation} \label{eq:uniform-ini} 
        \|F^\eps\|_{L^2(Q_R)}\geq 1-\delta \,, \quad \forall \eps >0 \,.
        \end{equation}
    \end{itemize}
\end{MAINtheorem}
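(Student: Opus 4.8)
\emph{Step 1 (a dilation removes $\eps$ from the velocity and confines the effective diffusivity near $1$).}
Write $\lambda_n:=\zeta^{n/2}$ and $(D_\lambda f)(x):=f(x/\lambda)$. A direct computation gives $\cL_{U_n,\eps}=D_{\lambda_n}\cL_{U,\eps\zeta^{-n}}D_{\lambda_n}^{-1}$ for the generators $\cL_{u,\eps}:=\nabla\times(u\times\cdot)+\eps\Delta$ on $L^2(\RR^3)$ (here $U$ denotes its periodic extension), hence $S^{U_n}_\eps(t)=D_{\lambda_n}S^U_{\eps\zeta^{-n}}(t)D_{\lambda_n}^{-1}$. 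Since $\lambda^{-3/2}D_\lambda$ is unitary, this conjugation preserves the ratio $\|S(t)F\|_{L^2}/\|F\|_{L^2}$ and sends $Q_R$ into $Q_{\lambda_n R}\subseteq Q_R$. As $\eps\in(\zeta^{n_\eps+1},\zeta^{n_\eps}]$ forces $\eps\zeta^{-n_\eps}\in(\zeta,1]$, it suffices to produce $\zeta\in(1/2,1)$, $\gamma>0$, and for each $\tilde\eps\in(\zeta,1]$ a unit vector $G^{\tilde\eps}\in L^2(\RR^3)$ with $\|S^U_{\tilde\eps}(t)G^{\tilde\eps}\|_{L^2}\ge\e^{2\gamma t}$ for all $t\ge0$ and $\|G^{\tilde\eps}\|_{L^2(\RR^3\setminus Q_{R_\delta})}\le\delta$ for some $R_\delta$ \emph{independent of} $\tilde\eps\in[\zeta,1]$; one then sets $F^\eps:=D_{\lambda_{n_\eps}}G^{\eps\zeta^{-n_\eps}}$, renormalized.

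\emph{Step 2 (uniform alpha-instability for $\tilde\eps$ near $1$, via a second rescaling).}
From $\cL_{U,\tilde\eps}=\tilde\eps\,\cL_{U/\tilde\eps,1}$, the $(U,\tilde\eps)$-modal problem \eqref{modal:passive-vector:rescaled}--\eqref{modal:div} (with $\tilde\eps\Delta$ in place of $\Delta$) is $\tilde\eps$ times the $\Delta$-normalized modal problem for the velocity $U/\tilde\eps$. Wherever \eqref{eq:ellipticKernel} is well-posed the solution map $\cS$, hence the matrix $A$ of \eqref{eq:Amatrix}, depends continuously on the velocity, so alpha-instability is an open condition. Because $\|\nabla U\|_{L^\infty}<1$, choosing $\zeta\in(1/2,1)$ close enough to $1$ ensures $\tilde\eps>\|\nabla U\|_{L^\infty}$ — hence $\|\nabla(U/\tilde\eps)\|_{L^\infty}<1$ and \eqref{eq:ellipticKernel} well-posed — and, shrinking the neighborhood, that every $U/\tilde\eps$ with $\tilde\eps\in[\zeta,1]$ is alpha-unstable. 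Applying Proposition \ref{prop:growing} along this family, and using that its proof constructs the unstable mode by a perturbation in $\jj$ near a fixed nonzero $\jj_\star$ (a small multiple of the unstable direction of $A$) with all data continuous in the velocity, we obtain such a $\jj_\star$ with $|\jj_\star|\le1$, a radius $r>0$, and for each $\tilde\eps\in[\zeta,1]$ an analytic family $\jj\mapsto(H^{\tilde\eps}(\cdot;\jj),p^{\tilde\eps}(\jj))$ of solutions of the $\tilde\eps$-modal problem on $Q_r(\jj_\star)$, continuous in $\tilde\eps$, with $\Re p^{\tilde\eps}(\jj)>0$. Shrinking $r$ and using compactness of $[\zeta,1]$, we fix $\gamma>0$ with $\Re p^{\tilde\eps}(\jj)\ge2\gamma$ on $Q_r(\jj_\star)$ for all $\tilde\eps\in[\zeta,1]$, and take $r$ so small that $Q_r(\jj_\star)$ sits in a fundamental domain of the dual lattice.

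\emph{Step 3 (a Bloch wave packet and its localization).}
Fix $0\le\chi\in C^\infty_c(Q_r)$, $\chi\not\equiv0$, and set
\[
G^{\tilde\eps}(x):=\int_{Q_r}\chi(\bk)\,H^{\tilde\eps}(x;\jj_\star+\bk)\,\e^{i(\jj_\star+\bk)\cdot x}\,\dd\bk .
\]
Each integrand is a Bloch wave in the $p^{\tilde\eps}(\jj_\star+\bk)$-eigenspace of the corresponding fibre operator and is divergence-free by \eqref{modal:div}, so $G^{\tilde\eps}\in L^2(\RR^3)$ is divergence-free and nonzero; by the Bloch-type theorem (Lemma \ref{Lemma: Bloch1}) the semigroup acts fibrewise, so $S^U_{\tilde\eps}(t)G^{\tilde\eps}$ is obtained by inserting the factor $\e^{p^{\tilde\eps}(\jj_\star+\bk)t}$ under the integral, and Plancherel in the Bloch variable gives
\begin{align*}
\|S^U_{\tilde\eps}(t)G^{\tilde\eps}\|_{L^2(\RR^3)}^2
&=\int_{Q_r}\chi(\bk)^2\,\e^{2t\Re p^{\tilde\eps}(\jj_\star+\bk)}\,\|H^{\tilde\eps}(\cdot;\jj_\star+\bk)\|_{L^2(\TT^3)}^2\,\dd\bk \\
&\ge\e^{4\gamma t}\,\|G^{\tilde\eps}\|_{L^2(\RR^3)}^2 ,
\end{align*}
i.e.\ $\|S^U_{\tilde\eps}(t)G^{\tilde\eps}\|_{L^2}\ge\e^{2\gamma t}\|G^{\tilde\eps}\|_{L^2}$. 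For localization, $G^{\tilde\eps}$ is the integral, against the fixed amplitude $\chi\in C^\infty_c$, of the plane wave $\e^{i(\jj_\star+\bk)\cdot x}$ times uniformly bounded $\TT^3$-periodic profiles that are smooth in $\bk$; expanding the profiles in Fourier series on $\TT^3$ displays $G^{\tilde\eps}$ as a Schwartz function whose relevant seminorms are bounded uniformly over the compact set $\tilde\eps\in[\zeta,1]$. Hence for every $\delta\in(0,1)$ there is $R_\delta$, independent of $\tilde\eps$, with $\|G^{\tilde\eps}\|_{L^2(\RR^3\setminus Q_{R_\delta})}\le\delta\|G^{\tilde\eps}\|_{L^2}$.

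\emph{Step 4 (conclusion and the main obstacle).}
For $\eps\in(\zeta^{n_\eps+1},\zeta^{n_\eps}]$ set $\tilde\eps:=\eps\zeta^{-n_\eps}\in(\zeta,1]$ and $F^\eps:=\|D_{\lambda_{n_\eps}}G^{\tilde\eps}\|_{L^2}^{-1}D_{\lambda_{n_\eps}}G^{\tilde\eps}$. Then $\|F^\eps\|_{L^2}=1$; Step 1 gives $\|S^{U_{n_\eps}}_\eps(t)F^\eps\|_{L^2}=\|S^U_{\tilde\eps}(t)G^{\tilde\eps}\|_{L^2}/\|G^{\tilde\eps}\|_{L^2}\ge\e^{2\gamma t}$, which is \eqref{eq:growthF}; and since $D_{\lambda_{n_\eps}}$ maps $Q_{R_\delta}$ onto $Q_{\lambda_{n_\eps}R_\delta}\subseteq Q_{R_\delta}$ and scales $L^2$ norms by a fixed constant, $\|F^\eps\|_{L^2(\RR^3\setminus Q_{R_\delta})}\le\delta$, hence $\|F^\eps\|_{L^2(Q_{R_\delta})}\ge\sqrt{1-\delta^2}\ge1-\delta$, which is \eqref{eq:uniform-ini} with $R=R_\delta$. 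I expect Step 2 to be the main obstacle: turning the \emph{pointwise} alpha-instability at $\tilde\eps=1$ (an eigenvalue condition on a single $3\times3$ matrix) into a \emph{uniform, quantitative} instability over the whole interval $\tilde\eps\in(\zeta,1]$, and correspondingly pinning down $\zeta\in(1/2,1)$ so that it is simultaneously close enough to $1$ — so $[\zeta,1]$ lies inside the neighborhood where the simple unstable eigenvalue persists — and larger than $\|\nabla U\|_{L^\infty}$ — so that \eqref{eq:ellipticKernel} stays well-posed after the $1/\tilde\eps$ rescaling. This forces one to use Proposition \ref{prop:growing} through the continuity of its perturbative construction in $\jj$ and in the velocity rather than as a black box; the only other delicate ingredient is the Bloch-type machinery of Step 3, supplied by Lemma \ref{Lemma: Bloch1}.
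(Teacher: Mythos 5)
Your overall architecture coincides with the paper's: the dilation in Step 1 reducing matters to a fixed velocity $U$ with effective diffusivity $\tilde\eps\in(\zeta,1]$ is exactly the paper's starting computation; the perturbation of the unstable simple eigenvalue away from $(\jj_\star,1)$, the Bloch wave packet, and the use of Lemma \ref{Lemma: Bloch1} to convert $\Re\,p\geq 2\gamma$ into \eqref{eq:growthF} all match (minor point: Lemma \ref{Lemma: Bloch1} is only the Plancherel identity; the ``fibrewise'' action of the semigroup is justified, as in the paper, by noting each Bloch wave is an exact solution and integrating in $\jj$). Your Step 2, via the identity $\cL_{U,\tilde\eps}=\tilde\eps\,\cL_{U/\tilde\eps,1}$ and openness of alpha-instability, is a repackaging of the same perturbation problem the paper solves directly with Lemmas \ref{resolvent bounds} and \ref{lemma:continuity of resolvent}: there the contour is placed inside $\{\Re z>\tfrac12\Re\,p(\jj_\star)\}$, so persistence of the simple eigenvalue with the uniform lower bound on its real part over $|\jj-\jj_\star|\leq J$, $\tilde\eps\in[1-\eta,1]$ comes out of the resolvent estimates, with no compactness argument on eigenvalues needed. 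You assert this uniform persistence (``all data continuous in the velocity'', ``analytic family \dots continuous in $\tilde\eps$'') rather than proving it; since you explicitly flag it as the main obstacle and the route is equivalent to the paper's, I read this as an under-justified but correct plan rather than a wrong approach.

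There are, however, two genuine gaps. First, the uniform localization \eqref{eq:uniform-ini}: you deduce it from the claim that $G^{\tilde\eps}$ is Schwartz with seminorms bounded uniformly in $\tilde\eps\in[\zeta,1]$. That requires smoothness (you invoke analyticity) of $\jj\mapsto H^{\tilde\eps}(\cdot;\jj)$ together with uniform-in-$\tilde\eps$ control of its $\bk$-derivatives and of the decay of its Fourier coefficients in $x$ — none of which is established, and this uniform quantitative control is precisely the nontrivial content of the second bullet of the theorem. The paper gets by with far less regularity: only the Lipschitz-in-$\eps$ bound \eqref{eq:continuityOfH} from Lemma \ref{lemma:continuity of resolvent}, combined with Lemma \ref{Lemma: Bloch1} and a compactness/contradiction argument in $(\eps,R)\in[\zeta,1]\times\RR_+$; no differentiability in $\jj$ is ever used. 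Your route could presumably be completed via analytic perturbation theory in $\jj$ with uniform constants, but as written the decisive uniformity is assumed, not proved. Second, reality: your wave packet, built from a single bump $\chi$ near $\jj_\star$, is genuinely complex-valued. The paper symmetrizes over $Q_J(\jj_\star)\cup Q_J(-\jj_\star)$ using $\overline{H(\cdot;\jj,\eps)}=H(\cdot;-\jj,\eps)$ and $\overline{p(\jj,\eps)}=p(-\jj,\eps)$ to obtain a real-valued datum; this matters downstream, since $B^\eps$ in Theorem \ref{thm:main} is a real magnetic field, and passing to real or imaginary parts a posteriori does not preserve the pointwise-in-time lower bound \eqref{eq:growthF}.
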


A consequence of this theorem, considering the case $n=0$ for the velocity field, i.e. $U_0 = U$
and $\eps =1$ for the magnetic diffusivity, is that there exists $F^1 \in L^2 (\RR^3)$ such that
$$ \| S^U_1 (t) F^1  \|_{L^2 (\RR^3)} \geq \e^{2\gamma t} \| F^1 \|_{L^2 (\RR^3)} \,.  $$
In other words, the equation \eqref{passive-vector} admits a solution that grows exponentially in time for 
$\eps = 1$.
The theorem above generalizes this property to a specific rescaling of the velocity field, which preserves a uniform bound on the Lipschitz norm. Furthermore, the initial data corresponding to exponentially growing solutions to \eqref{passive-vector} satisfy a concentration of mass property that holds \emph{uniformly} in 
$\eps$.

The Section is now divided in several subsections.
First, in Subsection \ref{subsec:modal}, we derive an ansatz in the modal form for an exponentially growing solution with a rescaled velocity field. Then, in Subsection \ref{subsec:spectral-analysis}, we analyze the spectral properties of the operator that naturally arises from the eigenvalue problem associated with the exponentially growing solution. Next, in Subsection \ref{subsec:existence-unstable}, we prove Proposition \ref{prop:growing} for the problem on the three-dimensional torus and show that a typical choice of an ABC flow is alpha-unstable. In Subsection \ref{subsec:continuity}, we study the continuity of the operator arising from the eigenvalue problem with respect to magnetic diffusivity. In Subsection \ref{subsec:bloch}, we prove a Bloch-type result, extending our ansatz from the three-dimensional torus to the whole space. Finally, in Subsection \ref{subsec:proof-theorem}, we use these tools to prove Theorem \ref{thm:Section2}.

\subsection{Modal form} \label{subsec:modal}
The goal of this section is to prove Proposition \ref{prop:growing} and construct a solution to \eqref{passive-vector} in $L^2(\RR^3)$. 
To derive the spectral problem \eqref{modal:passive-vector:rescaled}-\eqref{modal:div}, we first set $B^\eps(t,x)= H \left(\frac{x}{\sqrt{\eps}} \right) \e^{ i \eps^{- \frac12} \jj \cdot x + p t}$ as in \eqref{def:ansatz}, use vector calculus identities\footnote{ $\nabla \times (A \times C) = (C \cdot \nabla) A + (\nabla \cdot C) A - (A \cdot \nabla ) C - (\nabla \cdot A) C $ and $A \times (C \times D) =  (A \cdot D) C -  (A \cdot C) D$}  and the divergence free property of $u$ to get 
\begin{align*} 
    \nabla \times (u\times B^\eps) & = \eps^{-\frac12}\e^{i \eps^{-\frac12}\jj \cdot x + p t} \left( \eps^{\frac12}(H \cdot \nabla) u + (\nabla \cdot H) u - (u \cdot \nabla )H + (i\jj\cdot H) u - (i\jj \cdot u) H  \right)\notag \\
    \Delta B^\eps &= \eps^{-1}\e^{pt + \eps^{-\frac12}i\jj \cdot x} ((\nabla + i\jj)^2 H),\\
    \nabla\cdot B^\eps&=(\nabla + i \jj ) \cdot H=0.
\end{align*}
Thus, \eqref{modal:passive-vector:rescaled}-\eqref{modal:div} is derived by  defining $u=U_\eps$ as in \eqref{eq:Ueps}-\eqref{def:ansatz}. 

The main object of our focus is the linear operator $\cL$ in \eqref{modal:passive-vector:rescaled} defined as
\begin{equation}\label{eq:mainop}
    \cL H=(\nabla +i  \jj )\times (U \times H) +  (\nabla + i\jj)^2 H.
\end{equation}
We then define
\begin{equation} 
\label{unperturbed}
\cL_0H=\nabla \times (U \times H)+\Delta H
\end{equation}
and its first order perturbation 
\begin{equation}\label{eq:L1def}
\cL_1H= i\frac{\jj}{|\jj|}\times (U \times H)+2 i\frac{\jj}{|\jj|}\cdot \nabla H.
\end{equation}
In this way,
\begin{equation}\label{def:mainoperatorL}
\cL=\cL_0+| \jj| \cL_1- |\jj|^2,
\end{equation}
and we can appeal to tools from perturbation theory \cite{K76} using $|\jj|\ll 1$ as a small parameter, independently on $\eps$.

\subsection{Spectral analysis} \label{subsec:spectral-analysis}
To analyze the spectral properties of $\cL$ in \eqref{def:mainoperatorL}, we first give a fairly accurate description of $\cL_0$. Its kernel $\Ker(\cL_0)$ was already characterized in the original work of Roberts \cite{Roberts70}, in a perturbative way.

\begin{lemma} \label{lemma:delta-small}
There exists $\delta_0>0$ so that for any divergence-free $U \in W^{1, \infty} (\TT^3)$ with $\|U\|_{W^{1, \infty}}<\delta_0$  and any $v\in \CC^3$, there exists a unique $L^2(\TT^3)$-solution to the problem
\begin{align} \label{eq:SSS}
            \cL_0 S =  \nabla \times ( v \times U ),\qquad 
            \int_{\TT^3} S(x)\dd x =0.
    \end{align}
Calling $\cS:\CC^3\to L^2(\TT^3)$ the corresponding solution map $v\mapsto \cS (v)=S$, we have that
\begin{equation}\label{eq:KerL0}
\Ker(\cL_0)=\left\{v+\cS(v):v\in \CC^3\right\},
\end{equation}
and it is therefore  three-dimensional.  
\end{lemma}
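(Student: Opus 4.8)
The plan is to recast \eqref{eq:SSS} as a contraction-mapping problem on the space $L^2_\ast(\TT^3)$ of mean-free $\CC^3$-valued fields, exploiting that $\cL_0$ is a lower-order perturbation of the Laplacian, which is boundedly invertible on $L^2_\ast(\TT^3)$. The starting observation is that $\Delta^{-1}\nabla\times$ defines a bounded operator from $L^2(\TT^3;\CC^3)$ into $L^2_\ast(\TT^3)$ whose norm is a universal constant $C$: on the Fourier side its symbol at frequency $k\neq0$ is $v\mapsto-|k|^{-2}(ik)\times v$, of norm $|k|^{-1}\leq C$. I would then fix $\delta_0:=1/(2C)$.

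Given $U$ with $\|U\|_{W^{1,\infty}}<\delta_0$ and $v\in\CC^3$, I define $\Phi\colon L^2_\ast(\TT^3)\to L^2_\ast(\TT^3)$ by $\Phi(S)=\Delta^{-1}\nabla\times(v\times U)-\Delta^{-1}\nabla\times(U\times S)$; this is well-defined because $v\times U$ and $U\times S$ are products of an $L^\infty$ and an $L^2$ factor, hence lie in $L^2$. The contraction estimate $\|\Phi(S_1)-\Phi(S_2)\|_{L^2}\leq C\|U\|_{L^\infty}\|S_1-S_2\|_{L^2}\leq\tfrac12\|S_1-S_2\|_{L^2}$ is immediate, so the Banach fixed-point theorem produces a unique $S=:\cS(v)$ with $S=\Phi(S)$. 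Applying $\Delta$ shows that $S=\Phi(S)$ is equivalent to \eqref{eq:SSS}, while conversely any distributional $L^2_\ast$-solution of \eqref{eq:SSS} satisfies $S=\Phi(S)$ (in $\Delta S=\nabla\times(v\times U)-\nabla\times(U\times S)$ the right-hand side is a mean-free element of $H^{-1}$, so $\Delta^{-1}$ may be applied), which yields the uniqueness claimed. Uniqueness forces $\cS$ to be linear, the contraction bound gives $\|\cS(v)\|_{L^2}\lesssim|v|$, and a short bootstrap using $U\in W^{1,\infty}$ places $\cS(v)$ in $H^2(\TT^3)$.

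For the kernel identity \eqref{eq:KerL0}, the inclusion $\supseteq$ is a direct computation: for constant $v$ one has $\Delta v=0$ and $\nabla\times(U\times v)=-\nabla\times(v\times U)$, so $\cL_0(v+\cS(v))=-\nabla\times(v\times U)+\cL_0\cS(v)=0$ by \eqref{eq:SSS}. For $\subseteq$, take $H\in L^2(\TT^3)$ with $\cL_0H=0$ and split $H=\bar H+(H-\bar H)$ with $\bar H:=\fint_{\TT^3}H\in\CC^3$; then $H-\bar H\in L^2_\ast(\TT^3)$ and $\cL_0(H-\bar H)=-\cL_0\bar H=\nabla\times(\bar H\times U)$, so uniqueness gives $H-\bar H=\cS(\bar H)$, i.e. $H=\bar H+\cS(\bar H)$. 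Finally, $v\mapsto v+\cS(v)$ is a linear surjection $\CC^3\to\Ker(\cL_0)$, and it is injective because the mean of $v+\cS(v)$ equals $v$; hence $\dim\Ker(\cL_0)=3$.

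This is routine elliptic perturbation theory, so I do not anticipate a serious obstacle; the one point that genuinely requires attention is that the contraction constant — and therefore the threshold $\delta_0$ — must be chosen uniformly over all admissible $U$, which is exactly why I would isolate the universal bound on $\|\Delta^{-1}\nabla\times\|_{L^2\to L^2}$ before introducing $\Phi$. A secondary point is checking that arbitrary distributional $L^2$-solutions of \eqref{eq:SSS} are captured by the fixed-point reformulation, which is handled by the mean-free/$H^{-1}$ remark above together with elliptic regularity on $\TT^3$.
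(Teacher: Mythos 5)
Your proposal is correct and follows essentially the same route as the paper: invert $\cL_0$ on mean-free fields by a smallness/Neumann-type argument (your Banach fixed point for the affine map $\Phi$ is just the paper's Neumann series with the smoothing operator placed on the left, $\Delta^{-1}\nabla\times$, instead of the paper's factorization $(1+\nabla\times(U\times\Delta^{-1}\cdot))\Delta$), and then characterize $\Ker(\cL_0)$ by the identical splitting of $H$ into its mean and mean-free parts. The only difference is cosmetic but harmless: your placement of $\Delta^{-1}\nabla\times$ makes the contraction constant depend only on $\|U\|_{L^\infty}$, whereas the paper's estimate uses $\|U\|_{W^{1,\infty}}$, which is all the lemma assumes anyway.
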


\begin{proof}
We preliminarily notice that $S$ is a mean free solution to \eqref{eq:SSS} if and only if 
\begin{equation}\label{eq:SSSaux}
\left(1+ \nabla \times ( U \times \Delta^{-1})\right)\Delta S  =   \nabla \times ( v \times U ) .
\end{equation}
In particular, note that the invertibility in the space of mean zero functions of the left hand-side of \eqref{eq:SSSaux} is sufficient to guarantee a unique solution to \eqref{eq:SSS}. Furthermore, we have for some $\delta_0\in (0,1)$ that
\begin{align}
\|\nabla \times (U \times \Delta^{-1}S)\|_{L^2} =\|\Delta^{-1}S \cdot \nabla U+\nabla \cdot (\Delta^{-1}S)U-U \cdot \nabla \Delta^{-1}S\|_{L^2} \le  \frac{1}{2\delta_0}\|U\|_{W^{1, \infty}} \|S\|_{L^2},
\end{align}
and therefore, as soon as $\|U\|_{W^{1, \infty}}<\delta_0$, the operator $\Delta+\nabla \times (U \times \cdot)$ is invertible  and 
\begin{equation}\label{eq:NeumannSeries}
(\Delta+\nabla \times (U \times \cdot))^{-1}=\Delta^{-1}\sum_{n \geq 0}(-1)^n (\nabla \times (U \times \Delta^{-1}\cdot))^n.
\end{equation}
To prove \eqref{eq:KerL0}, we first observe that any element of the form $v+\cS(v)$ belongs to the kernel by a straightforward computation. To prove the $\subset$ inclusion, we consider $H\in L^2(\TT^3)$ and write it as 
\begin{equation}
H=\langle H \rangle +H_{\neq}, \qquad \langle H \rangle:= \fint_{\TT^3}H(x)\dd x, \qquad H_{\neq}=H-\langle H \rangle.
\end{equation}
Then $\cL_0 H=0$ implies 
\begin{equation}
\cL_0 H_{\neq} =- \nabla \times (U \times \langle H \rangle)=\nabla \times (\langle H \rangle \times U),
\end{equation}
so that $H_{\neq}=\cS(\langle H \rangle)$. Therefore $H=\langle H \rangle +\cS(\langle H \rangle)$, and the proof is over.
\end{proof}

The Riesz projection of $\cL_0$ onto  $0\in \sigma(\cL_0)$ is defined as
\begin{equation}
    P =\frac{1}{2\pi i} \int_\Gamma (\mu-\cL_0)^{-1}\dd\mu,
\end{equation}
where $\Gamma$ is a contour enclosing a region entirely contained in the resolvent set $\rho(\cL_0)$, and whose interior contains no
spectral points of $\cL_0$ other than zero. This is particularly useful to characterize the spectrum of $\cL$ as follows.

\begin{lemma}
\label{spectral Lemma}
Let $\mu_1, \mu_2, \mu_3$ be the (possibly repeated) eigenvalues $P\cL_1P$ on $PL^2(\TT^3)$. 
Then $P$ maps onto $\Ker(\cL_0)$ and there exists $\delta_1 \in (0,1) $ such that  for all $| \jj | < \delta_1$ there exist three eigenvalues of $\cL$, denoted $p_\ell(\jj)$, $\ell=1,2,3$, which are of the form 
\begin{equation}
\label{eig eq}
p_\ell(\jj)= \mu_\ell| \jj | + o(|\jj|) \,.
\end{equation}
\end{lemma}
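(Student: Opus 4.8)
\emph{Setup.} The operator $\cL_0=\Delta+\nabla\times(U\times\cdot)$ is closed on $L^2(\TT^3)$ with domain $H^2(\TT^3)$, and since $\nabla\times(U\times\cdot)$ is of first order while $H^2(\TT^3)\hookrightarrow L^2(\TT^3)$ is compact, $\cL_0$ has compact resolvent; hence $\sigma(\cL_0)$ consists of isolated eigenvalues of finite algebraic multiplicity and $P$ is well defined. I would proceed in three steps: (i) prove that the eigenvalue $0$ is \emph{semisimple}, which is exactly the assertion that $P$ maps onto $\Ker(\cL_0)$, a $3$-dimensional space by Lemma~\ref{lemma:delta-small}; (ii) view $\cL=\cL_0+|\jj|\,\cL_1-|\jj|^2$ as an analytic perturbation of $\cL_0$ in the scalar parameter $s=|\jj|$ and use the Riesz projection to reduce the eigenvalue problem near $0$ to an analytic family of $3\times3$ matrices; (iii) read off the first-order expansion from the derivative of that matrix family at $s=0$, which is $P\cL_1P$.

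\emph{Step (i).} The structural fact I would exploit is that $\Ran(\cL_0)$ lies in the subspace $L^2_0(\TT^3)$ of mean-free fields, since $\fint_{\TT^3}\nabla\times(U\times H)=0=\fint_{\TT^3}\Delta H$ for every $H\in H^2$, whereas by \eqref{eq:KerL0} the only mean-free element of $\Ker(\cL_0)$ is $0$ (if $v+\cS(v)$ has zero mean then $v=\fint_{\TT^3}(v+\cS(v))=0$). Hence if $\cL_0^2H=0$ then $\cL_0H\in\Ker(\cL_0)\cap\Ran(\cL_0)\subseteq\Ker(\cL_0)\cap L^2_0(\TT^3)=\{0\}$, so $\Ker(\cL_0^2)=\Ker(\cL_0)$, and by induction $\Ker(\cL_0^k)=\Ker(\cL_0)$ for all $k\ge1$. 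Since $\Ran(P)=\bigcup_{k\ge1}\Ker(\cL_0^k)$, we get $\Ran(P)=\Ker(\cL_0)$ and $\Ker(P)=\Ran(\cL_0)$; as a byproduct $PG=\langle G\rangle+\cS(\langle G\rangle)$ for every $G\in L^2(\TT^3)$, because $G-PG\in\Ran(\cL_0)\subseteq L^2_0(\TT^3)$ has zero mean.

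\emph{Steps (ii)--(iii).} Fix the unit vector $\omega=\jj/|\jj|$ and set $\cL(s)=\cL_0+s\,\cL_1-s^2$ for $s\in\CC$ near $0$, with $\cL_1$ as in \eqref{eq:L1def}. Since $\cL_1$ is of first order it is $\cL_0$-bounded with relative bound $0$, so $\{\cL(s)\}$ is a holomorphic family of type (A): for $|s|$ small the contour $\Gamma$ defining $P$ lies in $\rho(\cL(s))$, the total Riesz projection $P(s)=\frac1{2\pi i}\int_\Gamma(\mu-\cL(s))^{-1}\dd\mu$ is analytic with $P(0)=P$, and $\operatorname{rank}P(s)=3$; hence $\cL(s)$ has exactly three eigenvalues (with multiplicity) inside $\Gamma$. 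By the reduction process for analytic perturbations of an isolated semisimple eigenvalue \cite{K76}*{Ch.~II \S5 and Ch.~VII \S1--2}, these are the eigenvalues of a matrix family $\widetilde\cL(s)$ on the fixed space $\Ran(P)=\Ker(\cL_0)$ with $\widetilde\cL(0)=\cL_0|_{\Ker(\cL_0)}=0$ and $\widetilde\cL(s)=s\big(P\cL_1P|_{\Ker(\cL_0)}+O(s)\big)$, the first-order coefficient being $P\cL_1P$ by the classical first-order perturbation formula (the $-s^2$ term contributes only at order $s^2$). By continuity of the spectrum of a matrix, the eigenvalues of $\widetilde\cL(s)$ are $p_\ell(\jj)=s(\mu_\ell+o(1))=\mu_\ell|\jj|+o(|\jj|)$, where $\mu_1,\mu_2,\mu_3$ are the eigenvalues of $P\cL_1P$; this is the claim, with $\delta_1$ the radius of validity of the expansion (and since every object depends continuously on $\omega\in S^2$, $\delta_1$ can be taken independent of direction). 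Finally, in the basis $\{e_k+\cS(e_k)\}_{k=1}^3$ of $\Ker(\cL_0)$ ($e_k$ the standard basis of $\CC^3$) one checks, using $PG=\langle G\rangle+\cS(\langle G\rangle)$ and the mean-freeness of $U$ (which kills $\fint_{\TT^3}i\,\omega\times(U\times v)$ and $\fint_{\TT^3}2i\,\omega\cdot\nabla\cS(v)$), that $P\cL_1(v+\cS(v))=Av+\cS(Av)$ with $Av=i\frac{\jj}{|\jj|}\times\fint_{\TT^3}U\times\cS(v)$; so $P\cL_1P$ is conjugate to the matrix $A$ of Definition~\ref{def:alpha}, which is what links this lemma to alpha-instability and to Proposition~\ref{prop:growing}.

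\textbf{Main obstacle.} The only non-routine step is (i): "$P$ maps onto $\Ker(\cL_0)$" is precisely semisimplicity of the eigenvalue $0$, and this is what makes the perturbed eigenvalues open up at genuine order $|\jj|$ rather than as fractional powers. It relies on the mean-value structure of $\cL_0$ — range in mean-free fields, kernel transversal to them — and not on any soft genericity argument. Everything downstream is the standard Riesz-projection/reduction machinery, the only mild care being the unboundedness of $\cL_0$, dealt with by the relative-bound-$0$ observation.
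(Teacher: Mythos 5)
Your proposal is correct and follows essentially the same route as the paper: you establish semisimplicity of the eigenvalue $0$ by the same mean-value argument (range of $\cL_0$ is mean-free while nonzero kernel elements $v+\cS(v)$ have nonzero mean), note the relative boundedness of $\cL_1$ with respect to $\cL_0$, and then invoke Kato's reduction to the $3\times3$ family $P\cL_1P$ on $\Ker(\cL_0)$ to get $p_\ell(\jj)=\mu_\ell|\jj|+o(|\jj|)$, with the $-|\jj|^2$ term harmless at first order. Your closing identification of $P\cL_1P$ with the matrix $A$ is extra here (it is the content of the paper's Lemma~\ref{lemma:relation-eigenvalues}), and your formula $PG=\langle G\rangle+\cS(\langle G\rangle)$ likewise reproves the paper's Lemma~\ref{averaging lemma}, but neither changes the argument.
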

\begin{proof}

The proof is an application of Theorem \ref{Kato}. Recalling \eqref{def:mainoperatorL}, it is enough to apply Theorem \ref{Kato} to the operator $\cL_0+ |\jj| \cL_1$, since $\sigma(\cL)=\sigma(\cL_0+ |\jj| \cL_1)-|\jj|^2$ .   Note that $\cL_0$ has a compact resolvent, i.e. $(\mu - \cL_0)^{-1}$ is a compact operator for any $ \mu \in \rho (\cL_0)$, so that in particular, all its eigenvalues are isolated, and the corresponding generalized eigenspaces are finite dimensional, see Lemma \ref{Riesz Projector}. In particular, since $0 \in \sigma(\cL_0)$, we only need to check that the eigenvalue $0$ is  semisimple (which also implies that $P$ maps onto $\Ker(\cL_0)$), and that the operator $\cL_1$ satisfies $\|\cL_1H\|\lesssim  \|\cL_0H\|+ \|H\|$.

\medskip

\emph{Step 1: $0$ is semisimple.} Suppose that $\cL_0^2 v=0$. In view of Lemma \ref{lemma:delta-small}, $\cL_0v=w+\cS(w)$, where $w\in \CC^3$. Since $\cL_0v$ and $\cS(w)$ have zero average, this implies that $w=0$. In turn, $\cS(w)=0$ and $\cL_0v=0$ as well. 

\medskip

\emph{Step 2: Estimate on $\cL_1$.}
By a straightforward computation,
\begin{equation}
-2\langle \nabla \times  (U  \times H), H \rangle   + 2\|\nabla H\|_{L^2}^2=-2\langle \cL_0 H,H\rangle
\leq  \|\cL_0 H\|^2_{L^2}+  \| H\|^2_{L^2}
\end{equation}
An integration by parts shows that
\begin{equation}
2|\langle \nabla \times  (U  \times H), H \rangle|
\lesssim\|U\|_{L^\infty} \|H\|_{L^2}\|\nabla H\|_{L^2},
\end{equation}
so that 
\begin{equation}\label{eq:grad-est}
\|\nabla H\|_{L^2}^2\lesssim  \|\cL_0 H\|^2_{L^2}+  (1+\|U\|_{L^\infty}^2)\| H\|^2_{L^2}
\end{equation}
It is clear from \eqref{eq:L1def} that
\begin{equation}
\|\cL_1H\|^2 \lesssim \|\nabla H\|^2+\|U\|_{L^\infty}^2\|H\|^2,
\end{equation}
and thus the conclusion of the proof follows from combining the above estimate with \eqref{eq:grad-est}.
\end{proof}

Thanks to Lemmas \ref{lemma:delta-small}-\ref{spectral Lemma}, the eigenvalue problem is reduced to a finite dimensional one. The following property is crucial to understand it in detail.

\begin{lemma}
\label{averaging lemma}
The Riesz projector $P$ and averaging $\langle\cdot\rangle$ satisfy
\begin{equation}
\langle PH \rangle =\langle H \rangle, \qquad PH=\langle H \rangle+ \cS(\langle H \rangle),
\end{equation}
for all $H \in L^2(\TT^3)$.
\end{lemma}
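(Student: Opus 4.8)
The plan is to exploit the explicit description of both $\Ker(\cL_0)$ from Lemma~\ref{lemma:delta-small} and the mapping properties of the Riesz projector $P$ from Lemma~\ref{spectral Lemma}. Recall that $P$ is the spectral projection onto the eigenvalue $0$ of $\cL_0$, and since $0$ is semisimple (Step~1 of Lemma~\ref{spectral Lemma}), $P$ maps $L^2(\TT^3)$ onto $\Ker(\cL_0)$, with $\Id - P$ mapping onto the complementary $\cL_0$-invariant subspace on which $\cL_0$ is invertible. By \eqref{eq:KerL0}, $\Ran(P) = \{v + \cS(v) : v \in \CC^3\}$. Hence for any $H \in L^2(\TT^3)$ we may write $PH = v_H + \cS(v_H)$ for a uniquely determined $v_H \in \CC^3$ (uniqueness because $\cS(v)$ is mean-free, so $v$ is recovered as $\langle PH\rangle$; this already gives $v_H = \langle PH\rangle$ and the second claimed identity will follow once we show $v_H = \langle H\rangle$).

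First I would establish the averaging identity $\langle PH \rangle = \langle H \rangle$. The natural route is to show that the finite-rank operator $H \mapsto \langle H \rangle \in \CC^3 \subset L^2(\TT^3)$ (viewing constants as functions) commutes with, or is compatible with, the spectral decomposition. Concretely: apply $\langle \cdot \rangle$ to the identity $H = PH + (\Id - P)H$, so it suffices to prove $\langle (\Id - P)H\rangle = 0$, i.e.\ that every element of $\Ran(\Id - P)$ is mean-free. Since $\cL_0$ is invertible on $\Ran(\Id-P)$, any $G \in \Ran(\Id - P)$ can be written $G = \cL_0 G'$ with $G' \in \Ran(\Id-P)$; but $\cL_0 G' = \nabla \times (U \times G') + \Delta G'$ is a sum of two terms each of which is mean-free on $\TT^3$ (a curl has zero mean, and $\Delta G'$ has zero mean). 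Hence $\langle G \rangle = 0$, which gives $\langle PH\rangle = \langle H\rangle$. Alternatively, and perhaps more cleanly, one can use the contour-integral definition $P = \frac{1}{2\pi i}\int_\Gamma (\mu - \cL_0)^{-1}\,\dd\mu$ together with the observation that on the subspace of constants $\cL_0$ acts as $0$ (since $\nabla \times (U \times v) = \nabla \times (v \times U)\cdot(-1)$... more precisely $\cL_0 v = \nabla\times(U\times v)$ which is generally nonzero) — so the cleaner argument is the invariance one above; I would present that.

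Then, with $\langle PH \rangle = \langle H\rangle$ in hand, the second identity is immediate: $PH \in \Ker(\cL_0)$ forces $PH = \langle PH\rangle + \cS(\langle PH\rangle)$ by \eqref{eq:KerL0} (the decomposition of a kernel element into its mean and the $\cS$ of its mean, exactly as in the proof of Lemma~\ref{lemma:delta-small}), and substituting $\langle PH\rangle = \langle H\rangle$ yields $PH = \langle H\rangle + \cS(\langle H\rangle)$.

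The main obstacle is the very first reduction — justifying that $\Ran(\Id - P)$ consists of mean-free functions. This hinges on $\cL_0$ being invertible (not merely injective) on that complementary subspace, which is where semisimplicity of the eigenvalue $0$ and the compact-resolvent property (Lemma~\ref{Riesz Projector}) enter; one should be slightly careful that "$\cL_0$ restricted to $\Ran(\Id-P)$ is invertible" really does let us write each $G$ as $\cL_0 G'$ with $G'$ still in $\Ran(\Id-P)$, and that the boundedness/closedness issues are handled by the standard spectral-projection formalism recalled in Appendix~\ref{appendix}. Everything else is a short formal computation.
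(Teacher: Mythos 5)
Your argument is correct, but it takes a genuinely different route from the paper on the key identity $\langle PH\rangle=\langle H\rangle$. You prove it by showing that $\Ran(\Id-P)$ consists of mean-free functions: since $0$ is isolated, the part of $\cL_0$ in $\Ran(\Id-P)$ has $0$ in its resolvent set, so every $G\in\Ran(\Id-P)$ is $\cL_0 G'$ with $G'\in D(\cL_0)\cap\Ran(\Id-P)$, and $\cL_0 G'=\nabla\times(U\times G')+\Delta G'$ has zero average on $\TT^3$. This is sound, but it leans on the separation-of-spectrum theorem (invertibility of the part of $\cL_0$ in the complementary invariant subspace), a standard Kato fact that the paper's appendix does not explicitly record for $\Ran(\Id-P)$; you correctly flag this as the delicate point. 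The paper instead exploits the same mean-free observation one level earlier: averaging the resolvent identity $(\mu-\cL_0)R(\mu;\cL_0)H=H$ kills the curl and Laplacian terms and gives $\langle R(\mu;\cL_0)H\rangle=\mu^{-1}\langle H\rangle$, after which $\langle PH\rangle=\langle H\rangle$ follows from $\frac{1}{2\pi i}\int_\Gamma \mu^{-1}\,\dd\mu=1$. Note this is different from (and unaffected by) the contour-based idea you considered and rightly discarded, which would have needed constants to be annihilated by $\cL_0$; the paper's computation requires no invariance of constants and no decomposition of the spectrum, so it is more elementary and self-contained, while your route buys a slightly more structural statement (all of $\Ran(\Id-P)$ is mean-free). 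Your derivation of the second identity, $PH=\langle H\rangle+\cS(\langle H\rangle)$ from $PH\in\Ker(\cL_0)$, the mean-freeness of $\cS(v)$, and the averaging identity, coincides with the paper's.
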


\begin{proof}
Fix $\mu \in \CC\setminus\{0\}$. By definition of the resolvent $R(\mu;\cL_0) = (\mu - \cL_0 )^{-1}$, we have  $(\mu - \cL_0) R(\mu;\cL_0) H =H$, which we can write explicitly as
\begin{equation}
\mu R(\mu;\cL_0) H-\Delta (R(\mu;\cL_0)H)-\nabla \times (u\times (R(\mu;\cL_0)H))=H.
\end{equation}
Applying $\langle \cdot \rangle$ to this equation, we obtain
\begin{equation}
\mu \langle R(\mu;\cL_0) H \rangle=\langle H\rangle, \qquad \mu\in \CC\setminus\{0\},
\end{equation}
so that $\langle R(\mu;\cL_0) H\rangle=\mu^{-1} \langle H \rangle$ for any $H\in L^2(\TT^3)$. Hence, it remains to apply $\langle \cdot \rangle$ to the definition of $P$ which yields 
\begin{equation}
\langle P H\rangle =\frac{1}{2\pi i}\int_\Gamma \langle R(\mu;\cL_0)H \rangle\dd \mu =\frac{1}{2\pi i}\int_{\Gamma} \mu^{-1}\langle H \rangle \dd\mu=\langle H \rangle,
\end{equation}
as required. 
Thanks to Lemma \ref{spectral Lemma}, $P$ maps onto  $\Ker(\cL_0)$ which is precisely those vector fields of the form $v+\cS(v)$, for some  $v \in \CC^3$, thanks to \eqref{eq:KerL0}. Since $PH \in \Ker(\cL_0)$ and $\langle PH \rangle=\langle H \rangle$, the expression for $P$ follows immediately.
\end{proof}

It remains to characterize the eigenvalues of $P\cL_1P$ in a way that gives us the possibility to control their behavior and, in particular, the positivity of their real part.

\begin{lemma} \label{lemma:relation-eigenvalues}
Under the assumptions of Lemma \ref{spectral Lemma}, $\mu_\ell$ is an eigenvalue of $P\cL_1P$ on $PL^2(\TT^3)$ if and only if there exists some $v_\ell \in \CC^3$ satisfying
\begin{equation}
i \frac{\jj}{|\jj|} \times\fint_{\TT^3}   U \times \cS(v_\ell)  =\mu_\ell v_\ell.
\end{equation}
\end{lemma}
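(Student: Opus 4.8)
The plan is to use the explicit description of the range of $P$ afforded by Lemmas \ref{lemma:delta-small}, \ref{spectral Lemma} and \ref{averaging lemma}, and to show that, in the right coordinates, $P\cL_1P$ restricted to $PL^2(\TT^3)$ is nothing but the $3\times3$ matrix $v\mapsto i\frac{\jj}{|\jj|}\times\fint_{\TT^3}U\times\cS(v)$ (i.e. the matrix $A$ of Definition \ref{def:alpha}). Indeed, by Lemma \ref{spectral Lemma} $P$ maps onto $\Ker(\cL_0)$, and by \eqref{eq:KerL0} the map $\Phi:\CC^3\to PL^2(\TT^3)$, $\Phi(v)=v+\cS(v)$, is a linear isomorphism: it is surjective by \eqref{eq:KerL0}, and injective because $\cS(v)$ is mean-free, so that $v=\langle\Phi(v)\rangle$. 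In particular nonzero eigenvectors of $P\cL_1P$ correspond to nonzero vectors $v_\ell\in\CC^3$, which is the content of the equivalence once the conjugation is established.

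To compute $P\cL_1P\Phi(v)$ for $v\in\CC^3$, set $H=\Phi(v)=v+\cS(v)\in PL^2(\TT^3)$, so $PH=H$ and (by elliptic regularity of the kernel, cf. \eqref{eq:grad-est}) $H\in H^1(\TT^3)$, hence $\cL_1 H\in L^2(\TT^3)$. The key step is to apply the averaging $\langle\cdot\rangle$ to $\cL_1H=i\frac{\jj}{|\jj|}\times(U\times H)+2i\frac{\jj}{|\jj|}\cdot\nabla H$. The second term is a constant-coefficient first-order derivative of a periodic function, so $\langle 2i\frac{\jj}{|\jj|}\cdot\nabla H\rangle=0$; for the first term, $\langle i\frac{\jj}{|\jj|}\times(U\times H)\rangle=i\frac{\jj}{|\jj|}\times\fint_{\TT^3}U\times H$, and since $U$ is mean-free, $\fint_{\TT^3}U\times v=\langle U\rangle\times v=0$, so only the $\cS(v)$-part contributes. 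Hence $\langle\cL_1H\rangle=w$, where $w:=i\frac{\jj}{|\jj|}\times\fint_{\TT^3}U\times\cS(v)$. Now Lemma \ref{averaging lemma} applied to $G=\cL_1H$ gives $P\cL_1H=\langle\cL_1H\rangle+\cS(\langle\cL_1H\rangle)=w+\cS(w)=\Phi(w)$. Thus $P\cL_1P\,\Phi(v)=\Phi(w)$, i.e. $\Phi^{-1}(P\cL_1P)\Phi$ is exactly the map $v\mapsto w=i\frac{\jj}{|\jj|}\times\fint_{\TT^3}U\times\cS(v)$.

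The equivalence is then immediate. If $\mu_\ell$ is an eigenvalue of $P\cL_1P$ on $PL^2(\TT^3)$ with nonzero eigenvector $H$, write $H=\Phi(v_\ell)$ with $v_\ell\neq0$; applying $\Phi^{-1}$ to $P\cL_1P\,H=\mu_\ell H$ and using the computation above yields $i\frac{\jj}{|\jj|}\times\fint_{\TT^3}U\times\cS(v_\ell)=\mu_\ell v_\ell$. Conversely, given $v_\ell\neq0$ with $i\frac{\jj}{|\jj|}\times\fint_{\TT^3}U\times\cS(v_\ell)=\mu_\ell v_\ell$, the vector $H=\Phi(v_\ell)\in PL^2(\TT^3)\setminus\{0\}$ satisfies $P\cL_1P\,H=\Phi(\mu_\ell v_\ell)=\mu_\ell H$, so $\mu_\ell$ is an eigenvalue. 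I do not expect a genuine obstacle: the argument is entirely bookkeeping with the mean/mean-free decomposition. The only points needing a moment of care are that $\Phi$ is an isomorphism onto $PL^2(\TT^3)$ (so eigenvectors match nonzero $v_\ell$), and the two elementary vanishings $\langle 2i\frac{\jj}{|\jj|}\cdot\nabla H\rangle=0$ and $\langle U\rangle\times v=0$.
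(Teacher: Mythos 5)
Your argument is correct and follows essentially the same route as the paper: decompose elements of $PL^2(\TT^3)$ as $v+\cS(v)$ via Lemma \ref{lemma:delta-small}, average $\cL_1H$ using that the $\frac{\jj}{|\jj|}\cdot\nabla$ term and the $U\times v$ term have zero mean, and apply the explicit formula $PG=\langle G\rangle+\cS(\langle G\rangle)$ from Lemma \ref{averaging lemma} to both directions of the equivalence. Packaging this as a conjugation of $P\cL_1P$ by the isomorphism $v\mapsto v+\cS(v)$ is a slightly tidier presentation of the same computation, not a different method.
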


\begin{proof}
Fix $\jj\in \RR^3$ with $|\jj|<\delta_1$ as in Lemma \ref{spectral Lemma}, and suppose that $P\cL_1 Pv=\lambda v$ for some $v \in PL^2(\TT^3)$. We observe that $P$ maps onto the kernel of $\cL_0$ and by Lemma  \ref{lemma:delta-small} we have that $v\in \Ker(\cL_0)$ can be decomposed as $v=w+\cS(w)$ for some $w \in \CC^3$ with $\langle \cS (w) \rangle =0$.
Then, by Lemma \ref{averaging lemma}, we may apply $\langle\cdot\rangle$ to both sides of the equation to see 
\begin{equation}
\label{averaged step 1}
\langle \cL_1Pv \rangle=\lambda  w \,.
\end{equation}
 Substituting this into the expression \eqref{averaged step 1} and noting that the $\frac{\jj}{|\jj|} \cdot \nabla $ contribution of $\cL_1$ maps onto  zero average functions,  we see that
\begin{equation}
i \frac{\jj}{|\jj|}\times\fint_{\TT^3} U \times (w+\cS(w)) =\lambda w.
\end{equation}
Finally, since $U$ is zero average, we may further reduce this to 
\begin{equation}
\label{averaged eig equation}
i \frac{\jj}{|\jj|}\times \fint_{\TT^3}  U \times \cS(w) =\lambda w,
\end{equation}
showing the first direction of the implication. To prove the reverse direction, suppose that 
\begin{equation}
i \frac{\jj}{|\jj|}\times \fint_{\TT^3}  U \times \cS(v_\ell) =\mu_\ell v_\ell,\qquad \ell=1,2,3.
\end{equation}
Set $H=v_\ell +\cS(v_\ell)$. Applying $\cL_1$ and decomposing the result into a mean-free and constant parts yields 
\begin{equation}
\cL_1H= i\frac{\jj}{|\jj|} \times \langle U \times  \cS(v_\ell)\rangle+i\frac{\jj}{|\jj|} \times (U \times  \cS(v_\ell))_{\neq}
+i\frac{\jj}{|\jj|} \times (U \times v_\ell)+2 \frac{\jj}{|\jj|} \cdot \nabla \cS(v_\ell).
\end{equation}
Thus, applying $P$ and using its explicit form we found in Lemma \ref{averaging lemma}, we deduce that 
\begin{equation}
P\cL_1H
=\langle \cL_1H \rangle +\cS(\langle\cL_1H\rangle) 
= i\frac{\jj}{|\jj|} \times \langle  U \times  \cS(v_\ell)\rangle +\cS\left( i\frac{\jj}{|\jj|} \times \langle U \times  \cS(v_\ell)\rangle\right).
\end{equation}
Now, $ i\frac{\jj}{|\jj|} \times \langle U \times \cS(v_\ell)\rangle=\mu_\ell v_\ell$ by assumption. Therefore, we get
\begin{equation}
P\cL_1H=\mu_\ell (v_\ell+\cS(v_\ell))=\mu_\ell H,
\end{equation}
which completes the proof.
\end{proof}

\subsection{Existence of an unstable eigenvalue and  alpha-unstable flows} \label{subsec:existence-unstable}

The characterization of eigenvalues given in Lemma \ref{lemma:relation-eigenvalues} is precisely the motivation behind the instability condition \eqref{eq:Amatrix} in Definition \ref{def:alpha}.
With this definition at hand, the proof of Proposition \ref{prop:growing} readily follows. 

\begin{proof}[Proof of Proposition \ref{prop:growing}]
By assumption we know that there exists $\jj, v\in \CC^3$ and $\mu \in \CC$ with $\Re(\mu) >0$ such that \eqref{eq:Amatrix} holds. Using Lemma \ref{spectral Lemma} and Lemma \ref{lemma:relation-eigenvalues} we have that $\mu$ is an eigenvalue of $P\cL_1P$, and therefore $\cL$ has an eigenvalue of the form
\begin{equation} 
p(\jj) =  \mu|\jj|  + o (|\jj|).
\end{equation}
Up to replacing $\jj$ by $\beta \jj$ for $\beta \in (0,1)$ small enough, we get that $p(\jj) $ is such that $\Re (p(\jj)) >0$. Notice  that the equation for $\cS (v)$ \eqref{eq:SSS} is independent of $\jj$. 

To conclude, we need to verify the modal divergence condition \eqref{modal:div}. This is in fact equivalent to verifying the usual divergence-free condition for $B(x)=\e^{i\jj\cdot x}H(x)$, which solves 
\begin{equation}
\nabla \times (U \times B)+\Delta B=pB
\end{equation}
in a distributional sense. In particular, we may take divergence of the equation, to see that, distributionally it holds 
\begin{equation}
\Delta \nabla \cdot B=p \nabla \cdot B.
\end{equation}
on $\RR^3$. Since $\Re(p)>0$, and $\e^{i\jj\cdot x}H(x)$ is a tempered distribution, we may take the distributional Fourier transform and conclude $\nabla \cdot B=0$.
This concludes the proof.
\end{proof}

Finally, we provide an observation suggesting that the property of having a positive eigenvalue is in some sense generic for the profile $U$.
\begin{lemma}
Let $\mu_\ell$ be as in Lemma \ref{spectral Lemma}. Then one and only one of the following is true: 
\begin{enumerate}
    \item For all $\jj \in \RR^3$ and all $\ell=1,2,3$, it holds $\Re(\mu_\ell)=0$.
    \item There exists a nonzero $\jj\in\RR^3$ and $\ell=1,2,3$, so that $\Re(\mu_\ell)>0$.
\end{enumerate}
\end{lemma}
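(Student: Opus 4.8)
The plan is to recast the statement as an elementary observation about the spectrum of a real $3\times 3$ matrix. By Lemma~\ref{lemma:relation-eigenvalues}, for each nonzero $\jj\in\RR^3$ the eigenvalues $\mu_\ell=\mu_\ell(\jj)$ are exactly the eigenvalues of the matrix $A=A(U,\jj)$ in \eqref{eq:Amatrix}. Writing $\hat\jj=\jj/|\jj|$ and letting $J_{\hat\jj}$ denote the real skew-symmetric matrix of $w\mapsto\hat\jj\times w$, we have $A(U,\jj)=iJ_{\hat\jj}M$, where $Mv:=\fint_{\TT^3}U\times\cS(v)$ is a matrix that does not depend on $\jj$ (the equation \eqref{eq:SSS} defining $\cS$ does not).

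The key point is that $M$ is a \emph{real} matrix. Indeed, $U$ is real-valued and $\cL_0$ maps real fields to real fields, so for $v\in\RR^3$ the right-hand side $\nabla\times(v\times U)$ of \eqref{eq:SSS} is real-valued; uniqueness of the mean-zero solution then forces $\cS(v)\in L^2(\TT^3;\RR^3)$, whence $Mv=\fint_{\TT^3}U\times\cS(v)\in\RR^3$. Therefore $N_{\jj}:=J_{\hat\jj}M$ is a real $3\times3$ matrix and $A(U,\jj)=iN_{\jj}$. Since the characteristic polynomial of $N_\jj$ has real coefficients, its non-real eigenvalues occur in conjugate pairs; hence if $A(U,\jj)=iN_\jj$ has an eigenvalue with nonzero real part, say $-\beta+i\alpha$ with $\beta\neq0$ (arising from an eigenvalue $\alpha+i\beta$ of $N_\jj$), then $\alpha-i\beta$ is also an eigenvalue of $N_\jj$ and contributes the $A$-eigenvalue $\beta+i\alpha$, so that one of $\pm\beta$ is strictly positive. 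In other words: for every nonzero $\jj$, \emph{either} all $\mu_\ell(\jj)$ are purely imaginary, \emph{or} some $\mu_\ell(\jj)$ has strictly positive real part.

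The dichotomy then follows at once. Alternatives (1) and (2) are mutually exclusive. If (2) fails, then for every nonzero $\jj$ no $\mu_\ell(\jj)$ has positive real part, so by the preceding remark all $\mu_\ell(\jj)$ are purely imaginary for every $\jj$, which is exactly alternative (1). The only step that calls for a (routine) verification is the reality of $\cS$, which hinges on the well-posedness of \eqref{eq:SSS}—available under the smallness/rescaling hypothesis on $U$ underlying Definition~\ref{def:alpha}—and beyond that the argument is pure linear algebra, so I do not anticipate a genuine obstacle.
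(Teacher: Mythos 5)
Your proof is correct, but it takes a genuinely different route from the paper's. The paper's argument is a one-liner exploiting the odd dependence on the direction of $\jj$: since $A(U,\jj)=i\tfrac{\jj}{|\jj|}\times\fint_{\TT^3}U\times\cS(\cdot)$ and $\cS$ does not depend on $\jj$, replacing $\jj\mapsto-\jj$ replaces $A$ by $-A$ and hence each $\mu_\ell$ by $-\mu_\ell$; so an eigenvalue with negative real part at some $\jj$ produces one with positive real part at $-\jj$. You instead fix $\jj$ and use the structural fact that $A(U,\jj)=iJ_{\hat\jj}M$ with $J_{\hat\jj}$ and $M$ real (reality of $M$ following from reality of $U$, reality of the coefficients of $\cL_0$, and uniqueness in \eqref{eq:SSS}), so that eigenvalues of $A$ with nonzero real part come in pairs with opposite real parts and the same imaginary part. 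Your version buys a slightly stronger conclusion — the dichotomy holds at each fixed $\jj$, with no need to reverse its direction — at the modest cost of verifying that $\cS$ maps real vectors to real fields; the paper's version avoids any such verification but only locates the positive eigenvalue after flipping $\jj$. Both arguments correctly reduce, via Lemma \ref{lemma:relation-eigenvalues}, to the $3\times3$ matrix $A$, and both silently read condition (1) as ranging over nonzero $\jj$ (the matrix, like $\cL_1$, is undefined at $\jj=0$), so no gap arises there.
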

\begin{proof}
This is a straightforward result. Indeed, suppose that there exists some $\mu_\ell$ with nonzero real part, i.e. there exists a nonzero $\jj$ in $\RR^3$, and a vector $v \in \CC^3$ so that 
\begin{equation}
\fint_{\TT^3} i \frac{\jj}{|\jj|}\times (U \times \cS(v))=\mu_\ell v,
\end{equation}
If $\Re(\mu_\ell)>0$ we are done. Otherwise, simply replace $\jj \mapsto -\jj$, which completes the proof.
\end{proof}

As we shall see in Proposition \ref{prop:alphaDensity}, alpha-instability is a \emph{generic} property of smooth velocity fields. Before dealing with this, we show that the classical ABC flow is an explicit example   of velocity field that is alpha-unstable.

\begin{example}[ABC flows]\label{ex:ABCflows}
The ABC flows (named after Arnold, Beltrami, and Childress) are a family of steady, three-dimensional velocity fields that solve the $3d$ Euler equations. Given amplitute parameters $\sfa,\sfb,\sfc\in \RR$, the corresponding velocity field is given by
\begin{equation} \label{eq:ABC}
\Uabc(x_1,x_2,x_3)=\begin{pmatrix}
\sfa \sin(x_3)+\sfc \cos(x_2)\\
\sfb \sin(x_1)+ \sfa \cos(x_3)\\
\sfc \sin(x_2) +\sfb \cos(x_1)
\end{pmatrix}.
\end{equation}
Since
\begin{equation}
\|\Uabc\|_{W^{1,\infty}(\TT^3)}=\max\{|\sfa|+|\sfc|, |\sfa|+|\sfb|,|\sfb|+|\sfc|\},
\end{equation}
we consider a small positive parameter  $\delta_0 \ll 1$ to be able to apply Lemma \ref{lemma:delta-small} for $U=\delta_0 \Uabc$. In particular, the solution operator of \eqref{eq:SSS} can be written as in \eqref{eq:NeumannSeries} as
\begin{equation} \label{eq:expansion}
\cS(v)=\Delta^{-1}\sum_{n \geq 0}(-1)^n  (\nabla \times (\delta_0\Uabc \times \Delta^{-1} ))^n G_{\delta_0}(v),
\end{equation}
where $G_{\delta_0}(v)=- \nabla \times (\delta_0\Uabc \times v)$. Inserting this into the expression \eqref{eq:Amatrix} for the alpha-instability, we see that we need to compute
\begin{equation} \label{eq:full-matrix}
\fint_{\TT^3} i \frac{\jj}{|\jj|}\times (\delta_0 \Uabc \times \cS(v))=
\sum_{n \geq 0} (-1)^n i \frac{\jj}{|\jj|}\times \fint_{\TT^3} \delta_0 \Uabc \times   \Delta^{-1}\left(\nabla \times (\delta_0 \Uabc \times \Delta^{-1} )\right)^n G_{\delta_0}(v).
\end{equation}
In particular, at lowest order in $\delta_0$, i.e. the $n=0$ term above, this is 
\begin{equation}
\label{eq: first order matrix}
i\delta_0^{2}  \frac{\jj}{|\jj|} \times \cI(v),
 \qquad \cI(v):= -\fint_{\TT^3} \Uabc \times \Delta^{-1} (\nabla \times (\Uabc \times v)).
\end{equation}
For sufficiently small 
$\delta_0 $ we can apply the perturbative arguments from Lemma \ref{Abstract spectral Lemma} to show that if \eqref{eq: first order matrix} has a simple eigenvalue with a positive real part, then the full matrix \eqref{eq:full-matrix} will as well.

Using the fact that $\Uabc$ is divergence-free and $v$ is constant, we have
\begin{equation}\label{eq:IvFourier1}
\cI(v)= -\fint_{\TT^3} \Uabc \times \Delta^{-1} ((v\cdot i\nabla)  \Uabc  )=\sum_{\bk \in \ZZ^3, |\bk|=1} \hUabc(\bk)\times\left((v\cdot \bk)  \hUabc(-\bk)  \right),
\end{equation}
where we used that $\hUabc$ is concentrated in the modes with $|\bk|=1$. Indeed, we can compute explicitly 
\begin{equation}
\hUabc(\bk)=\frac{\sfa}{2} \begin{pmatrix} \pm i\\ 1 \\0\end{pmatrix}\mathds{1}_{(0,0, \pm 1)}(\bk)
+\frac{\sfb}{2}\begin{pmatrix} 0 \\ \pm i \\ 1\end{pmatrix}\mathds{1}_{(\pm 1,0,0)}(\bk)
+\frac{\sfc}{2} \begin{pmatrix} 1\\0\\ \pm i\end{pmatrix}
\mathds{1}_{(0,\pm 1,0)}(\bk).
\end{equation}
From this, it follows that
\begin{equation}\label{eq:IvFourier2}
\hUabc(\bk)\times  \hUabc(-\bk)= 
\pm i\frac{\sfa^2}{2} \begin{pmatrix} 0\\ 0 \\1\end{pmatrix}\mathds{1}_{(0,0, \pm 1)}(\bk)
\pm i\frac{\sfb^2}{2}\begin{pmatrix}  1 \\0 \\ 0\end{pmatrix}\mathds{1}_{(\pm 1,0,0)}(\bk)
\pm i \frac{\sfc^2}{2} \begin{pmatrix} 0\\ 1\\ 0\end{pmatrix}
\mathds{1}_{(0,\pm 1,0)}(\bk),
\end{equation}
and thus a computation shows
\begin{equation}
    \cI(v)= J v, \qquad
    J:=\begin{pmatrix} \sfb^2 &0&0\\ 0& \sfc^2 &0 \\ 0 & 0 &\sfa^2 \end{pmatrix},\qquad v\in\CC^3.
\end{equation}
In turn,
\begin{equation}
i\frac{\jj}{|\jj|} \times \cI(v)= L v,\qquad 
L:=\frac{i}{|\jj|}
\begin{pmatrix} 
0&- \sfc^2j_3&  \sfa^2 j_2\\ 
\sfb^2 j_3 & 0&-\sfa^2j_1  \\ 
-\sfb^2 j_2 & \sfc^2j_1 & 0 
\end{pmatrix},
\qquad \jj=(j_1,j_2,j_3)\in \RR^3.
\end{equation}
In this case, the alpha-instability condition \eqref{eq:Amatrix} is equivalent to finding a nonzero  $\jj\in \RR^3$ such that (recall \eqref{eq: first order matrix}) the matrix $L$ above has an eigenvalue with positive real part. The three eigenvalues of $L$ can be explicitly computed as
\begin{equation}
    \mu_0=0, \qquad \mu_{\pm} = \pm \frac{1}{|\jj|}\sqrt{\sfa^2 \sfb^2 j_2^2 + \sfb^2 \sfc^2 j_3^2 + \sfa^2\sfc^2  j_1^2}\,.
\end{equation}
In particular, as long as any two of $\sfa,\sfb,\sfc$ are nonzero, there exists a nonzero $\jj \in \RR^3$  so that the ABC flow is alpha-unstable.
\end{example}

 Let $\dot{H}^\infty_{\text{div}}$ the space of divergence-free $H^\infty$ vector fields with zero average. The topology on $\dot{H}^\infty_{\text{div}}$ is induced by the countable family of seminorms $\|\cdot \|_{H^n}$, for $n \in \NN$. Consequently, $\dot{H}^\infty_{\text{div}}$ is  metrizable, with a metric given, for instance, by 
\begin{equation}
\dd_{H^\infty}(U,V)=\sum_{n \geq 0}2^{-n}\frac{\|U-V\|_{H^n}}{1+\|U-V\|_{H^n}} \, .
\end{equation}
With this example in hand, we can establish that this property holds for a dense set of velocity profiles  $U$. More precisely, for any $U \in \dot{H}^\infty_{\text{div}}$, we can use the matrix $A$ defined in \eqref{eq:Amatrix} and define 
\begin{equation}
 \Omega:= \{ U \in \dot{H}^\infty_{\text{div}} : \exists \delta_0>0, \jj\in\RR^3 \text{ s.t. } A (\delta_0 U, \jj) \text{ has a simple eigenvalue $p$ with } \Re (p) >0  \}.
\end{equation}
\begin{proposition}\label{prop:alphaDensity}
    The set $\Omega$ is open and dense in $\dot{H}^\infty_{\text{div}}$. 
\end{proposition}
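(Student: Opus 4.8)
\emph{Plan.} I would prove openness by a continuity-plus-eigenvalue-perturbation argument, and density by perturbing any given profile with a \emph{high-frequency} copy of the standard ABC flow, exploiting that the reduced $3\times3$ matrix depends quadratically on the profile with no interaction between separated Fourier bands.

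\emph{Openness.} From the Neumann series \eqref{eq:NeumannSeries} one records that, whenever $\delta_0\|U\|_{W^{1,\infty}(\TT^3)}$ lies below the threshold of Lemma \ref{lemma:delta-small}, the solution map $\cS$ — hence the matrix $A(\delta_0 U,\jj)$ of \eqref{eq:Amatrix} — depends continuously (indeed real-analytically) on $U$ in the $W^{1,\infty}$-topology; since convergence in the metric $\dd_{H^\infty}$ forces convergence in $H^3(\TT^3)\hookrightarrow W^{1,\infty}(\TT^3)$, the map $U\mapsto A(\delta_0 U,\jj)$ is continuous on $\dot H^\infty_{\mathrm{div}}$ where defined. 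If $V\in\Omega$ with witnesses $(\delta_0,\jj)$, then $A(\delta_0 V,\jj)$ has an isolated simple eigenvalue $p$ with $\Re(p)>0$; a simple eigenvalue of a matrix is a locally holomorphic function of the entries and stays simple under small perturbations, so on a sufficiently small $\dd_{H^\infty}$-ball around $V$ the quantity $\delta_0\|\cdot\|_{W^{1,\infty}}$ stays below threshold and $A(\delta_0\,\cdot\,,\jj)$ still has a simple eigenvalue with positive real part. That ball lies in $\Omega$.

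\emph{Density, reduction to a model matrix.} Fix $U_0\in\dot H^\infty_{\mathrm{div}}$ and $\eta>0$. For $V\in\dot H^\infty_{\mathrm{div}}$ set
\[ \cI_V(v):=-\fint_{\TT^3}V\times\Delta^{-1}\bigl(\nabla\times(V\times v)\bigr),\qquad v\in\CC^3 , \]
so that expanding \eqref{eq:Amatrix} in powers of $\delta_0$ via \eqref{eq:NeumannSeries}, exactly as in Example \ref{ex:ABCflows}, gives $A(\delta_0 V,\jj)=\delta_0^2\,i\tfrac{\jj}{|\jj|}\times\cI_V(\cdot)+O(\delta_0^3)$ (fixed $V,\jj$). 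A short Fourier-side computation using $\nabla\cdot V=0$ shows $\cI_V$ is a \emph{real symmetric} matrix (on the Fourier side it equals $-2\sum_{\bk\neq0}|\bk|^{-2}\lambda_V(\bk)\,\bk\otimes\bk$ with $\lambda_V(\bk)$ the modal helicity, which is rank-one along $\bk$). Diagonalising $\cI_V=O\,\mathrm{diag}(\alpha_1,\alpha_2,\alpha_3)\,O^{\top}$ with $O\in SO(3)$ and using $O(a\times b)=(Oa)\times(Ob)$, the very computation of Example \ref{ex:ABCflows} (done there for $\mathrm{diag}(\sfb^2,\sfc^2,\sfa^2)$) shows that, for $\jj$ along the eigenvector of $\alpha_\ell$ with $\{i,j,\ell\}=\{1,2,3\}$, the matrix $i\tfrac{\jj}{|\jj|}\times\cI_V(\cdot)$ has eigenvalues $0$ and $\pm\sqrt{\alpha_i\alpha_j}$. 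Hence if some pair satisfies $\alpha_i\alpha_j>0$, the leading matrix has a \emph{simple} eigenvalue $\sqrt{\alpha_i\alpha_j}>0$, and by the perturbative Lemma \ref{Abstract spectral Lemma} the same persists for $A(\delta_0 V,\jj)$ for all small $\delta_0$, so $V\in\Omega$. It therefore suffices to produce $V$ with $\dd_{H^\infty}(U_0,V)<\eta$ whose symmetric matrix $\cI_V$ has two eigenvalues of a common strict sign.

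\emph{Density, the perturbation.} Let $W$ be the Fourier truncation of $U_0$ to $\{|\bk|\le M\}$; it lies in $\dot H^\infty_{\mathrm{div}}$ (truncation preserves mean-freeness and divergence-freeness), and $\dd_{H^\infty}(U_0,W)\to0$ as $M\to\infty$, so fix $M$ with $\dd_{H^\infty}(U_0,W)<\eta/2$. Let $\Uabc$ be the standard ABC flow ($\sfa=\sfb=\sfc=1$), pick an integer $K>M$, and set $\Uabc^{(K)}(x):=\Uabc(Kx)\in\dot H^\infty_{\mathrm{div}}$, with Fourier support $\{\pm Ke_1,\pm Ke_2,\pm Ke_3\}$ — disjoint from, and (like $\{|\bk|\le M\}$) symmetric under $\bk\mapsto-\bk$. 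Put $V_\tau:=W+\tau\,\Uabc^{(K)}$. Since $V\mapsto\cI_V$ is the diagonal restriction of a symmetric bilinear form, and since $\fint_{\TT^3}fg=\sum_\bk\hat f(\bk)\hat g(-\bk)$ vanishes when the Fourier supports of $f,g$ are disjoint and symmetric, all cross terms vanish: $\cI_{V_\tau}=\cI_W+\tau^2\cI_{\Uabc^{(K)}}$. A change of variables $y=Kx$ (one derivative from the curl, a factor $K^{-2}$ from $\Delta^{-1}$) gives $\cI_{\Uabc^{(K)}}=K^{-1}\cI_{\Uabc}$, and $\cI_{\Uabc}=\mathrm{diag}(\sfb^2,\sfc^2,\sfa^2)=\Id$ by Example \ref{ex:ABCflows}. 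Hence $\cI_{V_\tau}=\cI_W+c\,\Id$ with $c:=\tau^2K^{-1}>0$. Writing $\alpha_1\ge\alpha_2\ge\alpha_3$ for the eigenvalues of $\cI_W$: if $\alpha_2\ge0$ then $\alpha_1+c,\alpha_2+c>0$ for every $c>0$; if $\alpha_2<0$ then $\alpha_2+c,\alpha_3+c<0$ for $0<c<|\alpha_2|$. Either way $\cI_{V_\tau}$ has two eigenvalues of a common strict sign once $c$ is small enough. Choosing $\tau>0$ small enough that both this smallness of $c=\tau^2K^{-1}$ and $\dd_{H^\infty}(W,V_\tau)<\eta/2$ hold, the reduction step yields $V_\tau\in\Omega$ with $\dd_{H^\infty}(U_0,V_\tau)<\eta$. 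As $U_0,\eta$ were arbitrary, $\Omega$ is dense.

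\emph{Main obstacle.} The delicate point is the density, specifically realising the shift $\cI_V=\cI_W+c\,\Id$ by a \emph{small} perturbation: one cannot add a large multiple of $\Id$ to $\cI_W$, so one leans on three facts — $\cI$ is quadratic with no interaction between separated Fourier bands, the standard ABC flow has scalar $\cI_{\Uabc}$, and an arbitrarily small $c>0$ already suffices because adding $c\,\Id$ to a real symmetric matrix always leaves a pair of eigenvalues of a common strict sign. The remaining work (tuning $M,K,\tau$, then $\delta_0$, against the metric $\dd_{H^\infty}$, and checking symmetry and the scaling of $\cI$) is routine.
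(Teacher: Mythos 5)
Your proposal is correct and follows essentially the same route as the paper: reduction via the Neumann series to the leading matrix $i\jj\times\cI_V$, symmetry and diagonalization of $\cI_V$, density by adding a small high-frequency ABC flow whose Fourier support is disjoint from that of the truncated profile so that $\cI$ shifts by a small positive multiple of the identity, and openness by continuity plus simple-eigenvalue persistence. The only (harmless) variations are that you arrange directly for two eigenvalues of $\cI$ to share a strict sign instead of passing through the trivial-kernel set $\Omega_1$, you run the openness argument on $A(\delta_0\,\cdot\,,\jj)$ itself rather than on $\cI_U$, and you keep the scaling factor $K^{-1}$ in $\cI_{\Uabc^{(K)}}$, which the paper's statement $\cI_{U_1^{N+1}}=\Id$ elides.
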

\begin{remark}
    The small parameter $\delta_0>0$ primarily serves to simplify computations by enabling the justification of the series expansion \eqref{eq:expansion}. This, in turn, provides a first-order approximation of the operator $S(v)$ making its analysis more tractable. 
\end{remark}
\begin{proof}[Proof of Proposition \ref{prop:alphaDensity}]
This proof proceeds along much the same lines as that of Lemma 3.2. in \cite{Gerardvaret05}. To establish the density claim, we begin by noting that, as in \eqref{eq:NeumannSeries}, for any $U \in \dot{H}^\infty_{\text{div}}$ and sufficiently small $\delta_0 \ll 1$, we have 
\begin{equation}
\fint_{\TT^3} (\delta_0 U\times S(v))  = \sum_{n \geq 0}(-1)^n\fint_{\TT^3} \delta_0 U \times \Delta^{-1}\left(\nabla \times \left(\delta_0 U\times \Delta^{-1}\right)\right)^nG_{\delta_0} (v) 
\end{equation}
with $G_{\delta_0} (v)=-\nabla \times (\delta_0 U \times v)$. In particular, we may write  
\begin{equation}
\fint_{\TT^3} (\delta_0 U\times S(v))=\delta_0^2\left[-\fint_{\TT^3} U \times \Delta^{-1}\nabla \times (U \times v) +B_{\delta_0}(v)\right]=\delta_0^2 \left[ \cI_{U}(v)+B_{\delta_0}(v)\right]
\end{equation}
where $\|B_{\delta_0}\|=O(\delta_0)$, and 
\begin{equation} \label{eq:IU}
\cI_{U}(v):= -\fint_{\TT^3} U \times \Delta^{-1} (\nabla \times (U \times v)).
\end{equation} 
Taking the cross product with $\jj\in \mathbb{S}^2$,  Lemma \ref{Abstract spectral Lemma} implies that if $i \jj \times \cI_{U}(v)$
has a simple eigenvalue with positive real part, then for sufficiently small $\delta_0$ the same holds for $A(\delta_0 U, \jj)$. 
Thus, to establish density, it suffices to show that the set
\begin{equation}
\Omega_0:=\{U \in \dot{H}^\infty_{\text{div}}: i \jj \times \cI_U   \text{ has a simple eigenvalue with positive real part for some $\jj \in \mathbb{S}^2$}\}
\end{equation}
is dense in $\dot{H}^\infty_{\text{div}}$. 

However, we can further reduce the complexity of this problem. Specifically, for any real velocity field $U$, the matrix $\cI_{U}$ is self-adjoint. To see this, note that since $\cI_{U}$ is real-valued, it suffices to show that it is symmetric. Given $v,w \in \RR^3$, we use the anti-self-adjoint property of the cross product and the self-adjointness of the curl operator in $L^2$ to obtain 
\begin{equation}
\langle v, \cI_U(w)\rangle=\fint_{\TT^3}\langle U \times v, \nabla \times (\Delta^{-1} U \times w)\rangle =-\fint_{\TT^3}\langle  U \times (\Delta^{-1}\nabla \times (U \times v)),w \rangle =\langle \cI_{U}(v),w\rangle.
\end{equation}
Thus, $\cI_U$ is symmetric, allowing us to write it in the form
 $\cI_U=O^\top D O$, where $O$ is an orthogonal matrix and $D$ is a diagonal matrix given by $D= \text{diag} (\alpha_1, \alpha_2, \alpha_3)$, with its diagonal entries representing the real eigenvalues of $\cI_U$. 
 
 We now analyze the matrix $i\jj \times \cI_U=i\jj\times O^\top DO$. 
Using the identity 
$$
Mv\times Mw=\text{det} (M) M^{-T} (v \times w)
$$
for any matrix $M$ and  viewing  $O \jj \times D$ as a linear map\footnote{Denoting by $D_n$ the n$^{\text{th}}$ column of the matrix $D$, the matrix $O\jj \times D$ can be explicitly computed to be  $( O \jj \times D_1, O \jj \times D_2, O \jj \times D_3).$}, we obtain
\begin{equation}
i\jj\times (O^\top  DOv)=O^\top  (i O\jj \times DOv)=O^\top  (i O\jj \times D) Ov \,,
\end{equation}
Thus, it suffices to study the eigenvalues of $i(O\jj)\times D$. Finally, since $O$ is an isometry of $\RR^3$, it maps $\mathbb{S}^2$ onto itself, allowing us to replace $O\jj$ with any $\jj \in \mathbb{S}^2$. This flexibility means we retain full freedom in choosing
 $\jj$ in the Definition \ref{def:alpha} of alpha-instability. 
Hence, it remains to study the spectrum of 
\begin{equation}
i \jj \times D =i\begin{pmatrix}
0 & -\alpha_2 j_3 & \alpha_3 j_2\\
\alpha_1 j_3 & 0 & -\alpha_3 j_1\\
-\alpha_1 j_2 & \alpha_2 j_1 & 0
\end{pmatrix} \,.
\end{equation}
 A simple computation shows that the eigenvalues are given by 
\begin{equation}
\mu_0=0, \qquad \mu_{\pm}=\pm \sqrt{\alpha_1 \alpha_3 j_2^2+\alpha_1 \alpha_2 j_3^2+\alpha_2 \alpha_3 j_1^2}.
\end{equation}
As long as all of $\alpha_1,\alpha_2, \alpha_3$ are non-zero, at least one product - without loss of generality we can take $\alpha_1 \alpha_3$ - must be strictly positive. Setting $\jj=(1,0,0)$, we obtain a simple eigenvalue with a positive real part. 

Thus, it suffices to prove that the set 
\begin{equation}
\Omega_1:=\{U \in \dot{H}^\infty_{\text{div}}: \Ker(\cI_U )=\{0\}\}
\end{equation}
is dense in $\dot{H}^\infty_{\text{div}}$.
Fix $\epsilon>0$, and let $U_N =P_{|\bk | \leq N} U$, where $P_{|\bk| \leq N}$ is the Fourier projection onto $|\bk| \leq N$ modes, and $N=N(\epsilon)$ is big enough such that  
\begin{equation}
\dd_{H^\infty}(U_{N},U) < \frac{\epsilon}{2} \,. 
\end{equation}
    If $\cI_{U_{N}}$has a trivial kernel, the proof is complete.  
    Otherwise, for $\tilde\epsilon\in (0,1)$, we consider 
\begin{equation}
\widetilde{U}_{N} = U_{N} + \tilde{\epsilon} \,U_{1}^{N+1}
\end{equation}
where $U_{1}$ is the ABC flow in \eqref{eq:ABC} with $\sfa=\sfb=\sfc=1$
and 
\begin{equation}
U_{1}^{M}(\cdot) = U_{1} (M \cdot), \qquad \forall M \in \NN .
\end{equation}
For sufficiently small $\tilde{\epsilon}$, it follows that 
\begin{equation} 
\dd_{H^\infty}(\widetilde{U}_{N},U_{N}) < \frac{\epsilon}{2} ,
\end{equation}
so it remains to show that $\widetilde{U}_{N}$ has a simple eigenvalue with a positive real part. To do this, we claim from \eqref{eq:IU}  that
\begin{equation}
\cI_{\widetilde{U}_{N}}=\cI_{U_{N}}+\tilde{\epsilon}^2\cI_{U^{N+1}_1}.
\end{equation}
Indeed, we have the following expression:
\begin{align}\label{eq:compFourierU}
\cI_{\widetilde{U}_{N}}(v)&=\cI_{U_{N}}(v)+\tilde{\epsilon}^2 \cI_{U_{1}^{N+1}}(v)\notag\\
&\quad -\tilde{\epsilon}\left[\fint_{\TT^3}U_{N} \times \Delta^{-1}\nabla \times (U^{N+1}_{1}\times v)+\fint_{\TT^3}U^{N+1}_{1} \times \Delta^{-1}\nabla \times (U_{N} \times v)\right].
\end{align}
In Fourier space, we compute  
\begin{equation}
\fint_{\TT^3}U_{N} \times \Delta^{-1}\nabla \times (U^{N+1}_{1}\times v)=  - \sum_{\bk \in \ZZ^3}\widehat{U}_{N}(\bk)\times i\bk \times \left(|\bk|^{-2} \widehat{U}^{N+1}_{1}(\bk)\times v\right).
\end{equation}
Now, observe that $U_{N}$ is supported on Fourier modes with $|\bk|\leq N$, whereas $U^{N+1}_{1}$ is supported solely on Fourier modes with $|\bk|=N+1$. Thus, the sum vanishes. The second term in \eqref{eq:compFourierU} is handled in an identical manner. Finally, since $\cI_{U^{N+1}_{1}}=\Id$, it follows that we can take $\tilde{\epsilon}$ sufficiently small so that $\cI_{\widetilde{U}_{N}}$ has trivial kernel, thus completing the proof of density.

Next, we prove the openness. By a direct computation similar to \eqref{eq:compFourierU}, we know that if  $U \in \Omega$ and $\widetilde{U} \in H^\infty$, then 
\begin{equation}
\cI_{U+\widetilde{U}}(v)-\cI_{U}(v)=\cI_{\widetilde{U}}(v)-\fint_{\TT^3} U \times \Delta^{-1} \nabla \times (\widetilde{U}\times v)-\fint_{\TT^3}\widetilde{U}\times \Delta^{-1}\nabla \times ( U \times v).
\end{equation}
By applying Cauchy-Schwarz, we obtain
\begin{equation}
\left|\cI_{U+\widetilde{U}}(v)-\cI_{U}(v)\right| \leq \|U\|_{L^2}\|\widetilde{U}\|_{L^2}|v|,
\end{equation}
and similarly for the other terms.
Therefore, as long as $\dd_{H^\infty}(U,U+\widetilde{U})<\epsilon$ we have 
\begin{equation}
 \|\cI_{U+\widetilde{U}}-\cI_{U}\| \lesssim (1+\|U\|_{L^2})\epsilon.
\end{equation}
Thus, by Lemma \ref{Abstract spectral Lemma}, for sufficiently small $\epsilon$   depending only on $\|U\|_{L^2}$  and the spectrum of $ \cI_{U}$,   the matrix $\cI_{U+\widetilde{U}}$ still has an empty kernel, so that $i\jj \times A(\delta_0 U, \jj)$ has a simple eigenvalue with positive real part for some $\jj \in \mathbb{S}^2, \delta_0>0$, completing the proof of openness.
\end{proof}

\subsection{Continuity with respect to diffusivity} \label{subsec:continuity}
We now undertake a more detailed analysis of the spectral properties of the passive vector operator. Thus far, we have shown that alpha-instability guarantees the existence of an unstable eigenvalue for the operator $\cL$ when $|\jj|$ is sufficiently small and the diffusivity parameter is fixed. However, it is essential to establish that this unstable eigenvalue persists under small perturbations of both the mode $\jj$ and the diffusivity parameter $\eps$.  

Since these perturbations are relatively bounded with respect to $\cL$, we can invoke the classical perturbation theory of Kato \cite{K76}, as outlined in Appendix \ref{appendix}. With this in mind, we first derive an elementary resolvent bound estimate for $\cL$. Given that our primary focus is on the continuity of $\cL$ with respect to $\jj$ and $\eps$, we will make this dependence explicit in the notation and write
\begin{equation}
\cL(\jj,\eps)H=(\nabla +i \jj)\times (U \times H)+\eps(\nabla +i \jj)^2 H.
\end{equation}
 We now prove that $\cL (\jj, \eps)$ is Lipschitz continuous in $\jj, \eps$ upon composing on the right with its resolvent. 
\begin{lemma}
\label{resolvent bounds}
Let $U \in C^\infty (\TT^3)$ be a divergence-free velocity field with zero average, and let  $\eps >0$, $\jj \in \mathbb{R}^3$. For any $\mu \in \rho (\cL(\jj, \eps))$, the following bound holds:
\begin{equation}\label{eq:resolventDifference}
\|(\cL( \jj,\eps)-\cL(\jj ',\eps '))(\cL(\jj,\eps)-\mu)^{-1}\|_{L^2\to L^2}\leq C  \left( |\jj-\jj '|+ | \eps - \eps'|\right)(\|(\cL(\jj,\eps)-\mu)^{-1}\|_{L^2\to L^2}+1)
\end{equation}
for any $\jj, \jj', \eps, \eps'$,
where the constant $C>0$ depends continuously on $\jj, \jj', \eps,\eps',\mu$.  In particular, $(\cL(\jj,\eps)-\mu)^{-1}$ is jointly continuous in its arguments, so that for any $\jj,\eps, \mu$ with $\mu \in \rho(\cL(\jj,\eps))$, there holds
\begin{equation}
\label{eq:uniform Resolvent bound}
\|(\cL( \jj,\eps)-\cL(\jj ',\eps '))(\cL(\jj,\eps)-\mu)^{-1}\|_{L^2\to L^2}\leq C  \left( |\jj-\jj '|+ | \eps - \eps'|\right)
\end{equation}
where now the constant $C>$ depends continuously on $\jj, \jj', \eps,\eps', \mu$ with $\mu \in \rho(\cL(\jj,\eps))$.
\end{lemma}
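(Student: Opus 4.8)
The plan is to treat $\cL(\jj,\eps)-\cL(\jj',\eps')$ as a differential operator of order at most two whose coefficients are $O(|\jj-\jj'|+|\eps-\eps'|)$, to show that it is relatively bounded with respect to $\cL(\jj,\eps)$ with a bound depending continuously on all parameters, and then to compose on the right with the resolvent. First I would expand, using $(\nabla+i\jj)^2=\Delta+2i\jj\cdot\nabla-|\jj|^2$,
\begin{align*}
\cL(\jj,\eps)H-\cL(\jj',\eps')H
&= i(\jj-\jj')\times(U\times H)+(\eps-\eps')\Delta H\\
&\quad +2i(\eps\jj-\eps'\jj')\cdot\nabla H-(\eps|\jj|^2-\eps'|\jj'|^2)H,
\end{align*}
and observe that, since $\eps\jj-\eps'\jj'=\eps(\jj-\jj')+(\eps-\eps')\jj'$ and $\eps|\jj|^2-\eps'|\jj'|^2=\eps(\jj+\jj')\cdot(\jj-\jj')+(\eps-\eps')|\jj'|^2$, each coefficient is bounded by $C(|\jj-\jj'|+|\eps-\eps'|)$ with $C$ continuous in $(\jj,\jj',\eps,\eps')$; hence for $H\in H^2(\TT^3)$
\begin{equation*}
\|(\cL(\jj,\eps)-\cL(\jj',\eps'))H\|_{L^2}\le C\big(|\jj-\jj'|+|\eps-\eps'|\big)\big(\|\Delta H\|_{L^2}+\|\nabla H\|_{L^2}+\|H\|_{L^2}\big).
\end{equation*}

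The analytic core will be the elliptic a priori bound
\begin{equation*}
\|\Delta H\|_{L^2}+\|\nabla H\|_{L^2}\le C\big(\|\cL(\jj,\eps)H\|_{L^2}+\|H\|_{L^2}\big),\qquad H\in H^2(\TT^3),
\end{equation*}
with $C=C(\|U\|_{W^{1,\infty}},\jj,\eps)$ continuous in $(\jj,\eps)$ for $\eps>0$, which I would obtain by solving the definition of $\cL(\jj,\eps)$ for $\eps\Delta H$, bounding the advection term via $\|(\nabla+i\jj)\times(U\times H)\|_{L^2}\lesssim\|U\|_{W^{1,\infty}}(1+|\jj|)(\|\nabla H\|_{L^2}+\|H\|_{L^2})$ and the drift term via $2\eps|\jj|\|\nabla H\|_{L^2}$, and then absorbing the gradient by the torus interpolation $\|\nabla H\|_{L^2}^2=\langle-\Delta H,H\rangle\le\|\Delta H\|_{L^2}\|H\|_{L^2}$, i.e. $\|\nabla H\|_{L^2}\le\eta\|\Delta H\|_{L^2}+(4\eta)^{-1}\|H\|_{L^2}$, choosing $\eta$ small relative to $\eps$. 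It is precisely here that $\eps>0$ matters: the resulting constant behaves like $\eps^{-1}$, which is harmless for fixed positive diffusivity but does not survive the limit $\eps\to0$.

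With these two ingredients in hand I would insert the resolvent: for $\mu\in\rho(\cL(\jj,\eps))$ and $G\in L^2(\TT^3)$, putting $H=(\cL(\jj,\eps)-\mu)^{-1}G\in H^2(\TT^3)$, the identity $\cL(\jj,\eps)H=G+\mu H$ gives $\|\cL(\jj,\eps)H\|_{L^2}\le(1+|\mu|\,\|(\cL(\jj,\eps)-\mu)^{-1}\|)\|G\|_{L^2}$ and $\|H\|_{L^2}\le\|(\cL(\jj,\eps)-\mu)^{-1}\|\,\|G\|_{L^2}$; combined with the two displays this is exactly \eqref{eq:resolventDifference}. For the ``in particular'' claims I would invoke the standard Neumann-series stability of the resolvent set: writing $R=(\cL(\jj,\eps)-\mu)^{-1}$, \eqref{eq:resolventDifference} forces $\|(\cL(\jj,\eps)-\cL(\jj',\eps'))R\|<1/2$ once $|\jj-\jj'|+|\eps-\eps'|$ is small relative to $\|R\|$, whence $\mu\in\rho(\cL(\jj',\eps'))$ with $(\cL(\jj',\eps')-\mu)^{-1}=R(\Id-(\cL(\jj,\eps)-\cL(\jj',\eps'))R)^{-1}$ and $\|(\cL(\jj',\eps')-\mu)^{-1}\|\le2\|R\|$ locally; this yields joint continuity of $(\jj,\eps)\mapsto(\cL(\jj,\eps)-\mu)^{-1}$, and substituting the local bound on $\|(\cL(\jj',\eps')-\mu)^{-1}\|$ back into \eqref{eq:resolventDifference} (equivalently, via the second resolvent identity) upgrades it to \eqref{eq:uniform Resolvent bound}.

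The only genuinely delicate point I anticipate is the elliptic estimate together with the careful bookkeeping of \emph{continuous} dependence of every constant on $(\jj,\jj',\eps,\eps',\mu)$: one must verify that the relative bound of the difference operator with respect to $\cL(\jj,\eps)$ is finite — which is what forces $\eps>0$ — and that the absorption of $\|\nabla H\|_{L^2}$ and the Neumann-series step go through uniformly on compact parameter sets. The rest is routine manipulation of relatively bounded perturbations.
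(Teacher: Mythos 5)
Your proposal is correct and follows essentially the same route as the paper: you expand the difference operator with coefficients of size $O(|\jj-\jj'|+|\eps-\eps'|)$, solve the equation for $\eps\Delta H$ and absorb $\|\nabla H\|_{L^2}$ by interpolation (which is exactly where $\eps>0$ enters, as in the paper), compose with the resolvent to get \eqref{eq:resolventDifference}, and then use a Neumann-series perturbation of the resolvent to obtain joint continuity and hence \eqref{eq:uniform Resolvent bound}. The only cosmetic differences are that you state the elliptic bound as an a priori estimate for general $H\in H^2(\TT^3)$ before inserting $H=(\cL(\jj,\eps)-\mu)^{-1}G$, whereas the paper works directly on $(\cL(\jj,\eps)-\mu)^{-1}h$, and the paper folds the shift $\mu-\mu'$ into the same Neumann argument to make joint continuity in $\mu$ explicit.
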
  

\begin{remark}
    The constant $C>0 $ in Lemma \ref{resolvent bounds} may diverge as $\eps,\eps'\to 0$.  However, in our analysis, we will only apply the lemma for the case $\eps =1$  and $\eps'\to1$.
\end{remark} 
\begin{proof}[Proof of Lemma \ref{resolvent bounds}]
A computation shows that $ \left( \cL(\jj,\eps)-\cL(\jj',\eps') \right)H$ is equal to
\begin{equation*}
i(\jj-\jj' )\times (U \times H) +\eps(\Delta H+2i \jj \cdot \nabla  H-|\jj|^2H)-\eps'(\Delta H+2i \jj' \cdot \nabla  H-|\jj'|^2H)
\end{equation*}
and so, bounding this expression  we get 
\begin{equation*}
\|\left(\cL(\jj,\eps)-\cL(\jj',\eps') \right) H\|_{L^2}\lesssim \big[ |\jj -\jj'| \left( 1+|\jj'|+|\jj | \right)+  |\eps-\eps'| (1+|\jj|) +\eps'|\jj ' -\jj| )\big](\| H\|_{L^2}+\|\Delta H\|_{L^2}).
\end{equation*}
If we set $h=(\cL(\jj,\eps)-\mu)H$, estimate \eqref{eq:resolventDifference} is proven once we prove a bound on $\Delta H$.
Rearranging the equation for $\cL(\jj,\eps)$, we see that for any $H$ there holds 
\begin{equation}
\eps \Delta H =(\cL(\eps,\jj)-\mu)H-\eps (2i \jj \cdot \nabla -|\jj|^2)H+\mu H -  (\nabla +i \jj)\times (U \times H).
\end{equation}
Therefore, using the definition of the resolvent and an interpolation inequality, there holds 
\begin{align*}
\|\Delta (\cL(\jj,\eps)-\mu)^{-1}h\|_{L^2} 
&\leq C(\eps,\jj)\left(\|h\|_{L^2}+\|(\cL(\jj,\eps)-\mu)^{-1}h\|_{L^2}+\|\nabla (\cL(\jj,\eps)-\mu)^{-1}h\|_{L^2}\right) \\
&\leq C(\eps,\jj)\left(\|h\|_{L^2}+\|(\cL(\jj,\eps)-\mu)^{-1}h\|_{L^2}\right)+\frac12\|\Delta (\cL(\jj,\eps)-\mu)^{-1}h\|_{L^2}.
\end{align*}
Rearranging, this proves the first claim of the theorem.

It thus remains to prove \eqref{eq:uniform Resolvent bound}. To do so, it suffices to prove that the term $\|(\cL(\jj,\eps)-\mu)^{-1}\|_{L^2\to L^2}$ is jointly continuous in $\jj,\eps,\mu$. To do so, note that 
\begin{align*}
\cL(\jj',\eps')-\mu'& =\cL(\jj,\eps)-\mu+[\cL(\jj',\eps')-\cL(\jj,\eps)+(\mu-\mu')]
\\
& =\left\{1+[\cL(\jj',\eps')-\cL(\jj,\eps)+(\mu-\mu')](\cL(\jj,\eps)-\mu)^{-1}\right\}(\cL(\jj,\eps)-\mu).
\end{align*}
We now recall the bound \eqref{eq:resolventDifference} and defining $$
M=\|[\cL(\jj',\eps')-\cL(\jj,\eps)+(\mu-\mu')](\cL(\jj,\eps)-\mu)^{-1})\|_{L^2\to L^2},
$$
it holds that
\begin{equation}
M \leq C(|\jj-\jj'|+|\eps-\eps'|+|\mu-\mu'|)(\|\cL(\jj,\eps)-\mu)^{-1}\|_{L^2\to L^2}+1).
\end{equation}
In particular, as $(\jj',\eps',\mu') \to (\jj,\eps,\mu)$, it follows that $M \to 0$.
Therefore, as long as $M<1$, we can use a Neumann series expansion to write 
\begin{align*}
&(\cL(\jj',\eps')-\mu')^{-1}-(\cL(\jj,\eps)-\mu)^{-1}
=\\
&\qquad\qquad (\cL(\jj,\eps)-\mu)^{-1}\sum_{n \geq 1}(-1)^n \left\{ [\cL(\jj',\eps')-\cL(\jj,\eps)+(\mu-\mu')](\cL(\jj,\eps)-\mu)^{-1}\right\}^n.
\end{align*}
Taking norms of this equation, we see that 
\begin{equation}
\|(\cL(\jj',\eps')-\mu')^{-1}-(\cL(\jj,\eps)-\mu)^{-1}\|_{L^2\to L^2}\leq \|(\cL(\jj,\eps)-\mu)^{-1}\|_{L^2\to L^2} \frac{M}{1-M} \,.
\end{equation} 
Therefore, using that 
$M \to 0$ as $(\jj',\eps',\mu') \to (\jj,\eps,\mu)$, we deduce the joint continuity of $(\cL(\jj,\eps)-\mu)^{-1}$ and we conclude the proof.
\end{proof}

We now prove Lipschitz continuity of the eigenfunction $H$ with respect to $\eps$, as well as a uniform lower bound on the real part of the eigenvalue with respect to $\eps$ and $\jj$.
\begin{lemma}
\label{lemma:continuity of resolvent}
Let $U \in C^\infty (\TT^3)$ be a smooth divergence free, zero average velocity field and $\eps \in (0,1]$. Assume that there exists $ \jj_\star$, a function $H_{\jj_\star}$ and a simple eigenvalue $p(\jj_\star)$ with strictly positive real part so that 
\begin{equation*}
\cL(\jj_\star,1)H_{ \jj_\star} =p(\jj_\star)H_{\jj_\star}.
\end{equation*} 
Then, there exists a curve $\Gamma \subset\{\Re(z)>0\}$, and $J, \eta>0$, so that for all $|\jj-\jj_\star| \leq J$, $\eps \in [1-\eta,1]$ we have that
\begin{equation*}
H(x;\jj,\eps)=\frac{1}{2\pi i}\int_{\Gamma} (\mu-\cL(\jj,\eps))^{-1}H_{ \jj_\star} \dd\mu
\end{equation*}
is an eigenfunction of $\cL(\jj,\eps)$ with simple eigenvalue $p(\jj,\eps)$ satisfying
\begin{equation*}
\Re(p(\jj,\eps))\geq \frac{1}{2}\Re(p(\jj_\star)) \,.
\end{equation*}
Furthermore, the function 
\begin{equation*}
\eps \mapsto H(x;\jj,\eps)=\frac{1}{2\pi i}\int_{\Gamma} (\mu-\cL(\jj,\eps))^{-1}H_{\jj_{\star}} \dd\mu
\end{equation*}
satisfies
\begin{equation}
\label{eq:continuityOfH}
\|H(x;\jj,\eps')-H(x;\jj,\eps)\|_{L^2(\TT^3)}\leq C |\eps' -\eps| 
\end{equation}
for any $\eps, \eps' \in [1-\eta,1]$ and $| \jj -  \jj_\star | \leq J$, where the constant $C$ is independent of $\jj,\eps,\eps'$.
\end{lemma}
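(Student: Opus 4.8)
The plan is to run the textbook Riesz-projection perturbation scheme of Kato (cf.\ Appendix \ref{appendix}), feeding in the quantitative resolvent estimates of Lemma \ref{resolvent bounds}. Since $\cL(\jj_\star,1)$ has compact resolvent and $p(\jj_\star)$ is a simple eigenvalue, it is isolated in $\sigma(\cL(\jj_\star,1))$; I fix once and for all a circle $\Gamma=\{\,z:|z-p(\jj_\star)|=r\,\}$ with $r>0$ chosen so small that the closed disk it bounds meets $\sigma(\cL(\jj_\star,1))$ only at $p(\jj_\star)$, and in addition $r\le \tfrac12\Re(p(\jj_\star))$. The second requirement ensures $\Gamma\subset\{\Re z>0\}$ and that every point of the open disk bounded by $\Gamma$ has real part $>\tfrac12\Re(p(\jj_\star))$.

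By the joint continuity of $(\jj,\eps,\mu)\mapsto(\mu-\cL(\jj,\eps))^{-1}$ from Lemma \ref{resolvent bounds} — applied, as the Remark there permits, only near $\eps=1$, where the constants stay bounded — together with the compactness of $\Gamma$, there are $J,\eta>0$ such that $\Gamma\subset\rho(\cL(\jj,\eps))$ and $\sup_{\mu\in\Gamma}\|(\mu-\cL(\jj,\eps))^{-1}\|_{L^2\to L^2}\le C_0$ for all $|\jj-\jj_\star|\le J$, $\eps\in[1-\eta,1]$, with $C_0$ independent of $\jj,\eps$. Hence the Riesz projection
\[
P(\jj,\eps)=\frac{1}{2\pi i}\int_\Gamma(\mu-\cL(\jj,\eps))^{-1}\,\dd\mu
\]
is well defined, and by the resolvent identity together with the same continuity it depends norm-continuously on $(\jj,\eps)$. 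Shrinking $J,\eta$ so that $\|P(\jj,\eps)-P(\jj_\star,1)\|_{L^2\to L^2}<1$, the classical fact that two projections at operator-norm distance less than $1$ have equal rank gives $\operatorname{rank}P(\jj,\eps)=\operatorname{rank}P(\jj_\star,1)=1$. Therefore $\cL(\jj,\eps)$ has exactly one eigenvalue $p(\jj,\eps)$ of algebraic multiplicity one inside $\Gamma$, it is simple, and by the choice of $r$ it obeys $\Re(p(\jj,\eps))\ge\tfrac12\Re(p(\jj_\star))$. Moreover $\operatorname{ran}P(\jj,\eps)$ is the one-dimensional eigenspace of $p(\jj,\eps)$, and since $P(\jj_\star,1)H_{\jj_\star}=H_{\jj_\star}$ the same smallness gives $\|P(\jj,\eps)H_{\jj_\star}-H_{\jj_\star}\|_{L^2}<\|H_{\jj_\star}\|_{L^2}$, so $H(\cdot;\jj,\eps)=P(\jj,\eps)H_{\jj_\star}\ne 0$ is a genuine eigenfunction.

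For the Lipschitz bound \eqref{eq:continuityOfH}, fix $\jj$ with $|\jj-\jj_\star|\le J$ and $\eps,\eps'\in[1-\eta,1]$, and expand the difference of Riesz integrals via the resolvent identity:
\[
H(\cdot;\jj,\eps')-H(\cdot;\jj,\eps)=\frac{1}{2\pi i}\int_\Gamma(\mu-\cL(\jj,\eps'))^{-1}\big(\cL(\jj,\eps')-\cL(\jj,\eps)\big)(\mu-\cL(\jj,\eps))^{-1}H_{\jj_\star}\,\dd\mu.
\]
For each $\mu\in\Gamma$ the uniform estimate \eqref{eq:uniform Resolvent bound} (with $\jj'=\jj$) bounds $\|(\cL(\jj,\eps')-\cL(\jj,\eps))(\mu-\cL(\jj,\eps))^{-1}\|_{L^2\to L^2}\le C|\eps-\eps'|$ with $C$ uniform over the relevant compact parameter set, while $\|(\mu-\cL(\jj,\eps'))^{-1}\|_{L^2\to L^2}\le C_0$ by the previous step. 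Since $\Gamma$ has finite length, integrating and applying the operator to $H_{\jj_\star}$ yields $\|H(\cdot;\jj,\eps')-H(\cdot;\jj,\eps)\|_{L^2(\TT^3)}\le C|\eps-\eps'|$ with $C$ independent of $\jj,\eps,\eps'$, which is the claim.

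The only delicate point is bookkeeping of constants: Lemma \ref{resolvent bounds} allows its constants to blow up as $\eps\to0$, so it is essential that we localize around the interior point $\eps=1$ and never let $\eps$ approach $0$ here. Once this is observed, all the compactness and continuity arguments above go through routinely, and the remainder is the standard Riesz-projection perturbation of an isolated simple eigenvalue.
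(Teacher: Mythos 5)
Your proposal is correct and follows essentially the same route as the paper: fix a contour $\Gamma$ around $p(\jj_\star)$ inside $\{\Re z>\tfrac12\Re(p(\jj_\star))\}$, use the uniform resolvent bounds of Lemma \ref{resolvent bounds} on the compact set $\Gamma\times\{|\jj-\jj_\star|\le J\}\times[1-\eta,1]$ to control the Riesz projections, conclude persistence of a simple eigenvalue from equality of ranks of nearby projections, and obtain \eqref{eq:continuityOfH} from an $O(|\eps-\eps'|)$ bound on the difference of resolvents integrated over $\Gamma$ (the paper routes these last two steps through Lemma \ref{Abstract spectral Lemma} and its estimate \eqref{Riesz estimate}, while you inline them via the second resolvent identity, which is only a cosmetic difference). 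Your explicit verification that $P(\jj,\eps)H_{\jj_\star}\neq 0$ is a nice touch that the paper leaves implicit.
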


\begin{proof}[Proof of Lemma \ref{lemma:continuity of resolvent}.]
We aim to apply Lemma \ref{Abstract spectral Lemma} to the operators $T_0=\cL(\jj_\star,1)$, $T=\cL(\jj,\eps)$, using the bounds from Lemma \ref{resolvent bounds}. Indeed, start by fixing some curve $\Gamma \subset \rho(\cL(\jj_\star,1))$ that is entirely contained in the half plane $\{z\in \CC:\Re(z)>\frac{1}{2}\Re(p(\jj_\star)\}$, and whose interior contains no spectral points of $\cL(\jj_\star,1)$ other than $p(\jj_\star)$. 
By property \eqref{eq:resolventDifference} of Lemma \ref{resolvent bounds} we find that for any $\mu \in \Gamma$, there holds that $$\|(\cL(\jj,\eps)-\cL(\jj_\star ,1))(\cL(\jj_\star ,1)-\mu )^{-1}\|_{L^2\to L^2}\leq C (|\jj-\jj_\star |+|\eps-1|)(1+\|(\cL(\jj_\star,1)-\mu)^{-1}\|_{L^2 \to L^2})\,,$$ where the constant $C$ depends continuously on $\jj,\eps, \mu$. 
Since the resolvent is a continuous function in $\mu$, $\|(\cL(\jj_\star,1)-\mu)^{-1}\|_{L^2\to L^2}$ attains a maximum on the compact set $\Gamma$. Hence, there holds uniformly for $\mu \in \Gamma$ that $$\|(\cL(\jj,\eps)-\cL(\jj_\star ,1))(\cL(\jj_\star ,1)-\mu )^{-1}\|_{L^2\to L^2}\leq C (|\jj-\jj_\star |+|\eps-1|),$$ where $C$ is continuous in $\jj,\eps$. 
We can thus pick $J>0, \eta>0$ so that for any $|\jj-\jj_\star|\leq J$, $|\eps-1|\leq \eta$ there holds 
\begin{equation}
\label{eq:Gamma In Resolvent}
M:=\sup_{\mu \in \Gamma} \|(\cL(\jj,\eps)-\cL(\jj_\star ,1))(\cL(\jj_\star ,1)-\mu )^{-1}\|_{L^2\to L^2} <\frac{1}{1+|\Gamma|\sup_{\mu \in \Gamma}\|\cL(\jj_\star,1)-\mu)^{-1}\|_{L^2\to L^2}},
\end{equation}
so that $\Gamma \subset \rho(\cL(\jj,\eps))$ for any $|\jj-\jj_\star|\leq J$, $|\eps-1|\leq \eta$.
Furthermore, by Lemma \ref{Abstract spectral Lemma}, this bound implies that the operator $\cL(\jj,\eps)$ has at least one eigenfunction with eigenvalue $p(\jj,\eps)$ contained in the interior region confined by $\Gamma$. In particular, if the eigenvalue $p(\jj_\star,1)$ is simple, then the eigenspace corresponding to $p(\jj,\eps)$ is also simple.

Next, we shall show the continuity of the map $\eps \mapsto H(x;\jj,\eps)$. Indeed, we once again apply Lemma \ref{Abstract spectral Lemma} with $T_0=\cL(\jj,\eps)$, $T=\cL(\jj,\eps')$. By Lemma \ref{resolvent bounds}, there exists a constant $C(\jj,\eps,\eps',\mu)$ depending continuously on its arguments (as long as $\mu\in \rho(\cL(\jj,\eps))$) so that it holds 
\begin{equation}
\|(\cL(\jj,\eps)-\cL(\jj,\eps'))(\mu-\cL(\jj,\eps))^{-1}\|_{L^2\to L^2}\leq C(\jj,\eps,\eps',\mu)(|\eps-\eps'|).
\end{equation}
But note that by \eqref{eq:Gamma In Resolvent}, for any $|\jj-\jj_\star|\leq 1$, $|\eps-1|\leq \eta$ it holds that $\Gamma \subset \rho(\cL(\jj,\eps))$. Therefore, $C(\jj,\eps,\eps',\mu)$ is a continuous function on a compact set, and thus attains a maximum on it.
Therefore, by the estimate \eqref{Riesz estimate}, it holds true for any $|\jj-\jj_\star|\leq J$, $|\eps-1|\leq \eta$, $|\eps'-1|\leq \eta$ that
\begin{equation}
\|H(\cdot;\jj,\eps)-H(\cdot;\jj,\eps')\|_{L^2(\mathbb{T}^3)}\leq C \sup_{\mu \in \Gamma}\|(\cL(\jj,\eps)-\mu)^{-1}\|_{L^2 \to L^2}|\eps-\eps'|\|H_{\jj_\star}\|_{L^2(\mathbb{T}^3)}.
\end{equation}
By Lemma \ref{resolvent bounds} we know that $\|(\cL(\jj,\eps)-\mu)^{-1}\|_{L^2 \to L^2}$ is also jointly continuous in its arguments, and therefore we may bound it uniformly for $(\jj,\eps,\mu) \in [\jj_\star-J,\jj_\star+J]\times [1-\eta,1+\eta]\times \Gamma$,
which completes the proof.

\end{proof}

\subsection{A Bloch theorem and extension to the whole space} \label{subsec:bloch}
Under the assumptions of Proposition \ref{prop:growing}, the function $B(t,x)=H(x)\e^{i\jj\cdot x+pt}$ is an exponentially growing plane wave modulated by a $2\pi$-periodic function $H$, which solves the passive vector equation \eqref{passive-vector} with $\eps=1$ and $u=U$. In general, $B$ is not itself $2\pi$-periodic, and therefore the rescaling in \eqref{def:ansatz} does not provide an example of fast dynamo on $\TT^3$. However, we can use Proposition \ref{prop:growing} to construct a proper, finite-energy solution to \eqref{passive-vector} on $\RR^3$. This is in the spirit of a Bloch type results in quantum physics \cite{tao2023pde}. The result reads as follows. 

\begin{lemma}
\label{Lemma: Bloch1}
Let $\JJ = \TT^3$, $G \in L^2 (\TT^3 \times \JJ)$ and define 
\begin{equation}
F(x)=\int_{\JJ} G(x;\jj)\e^{i \jj \cdot x}\dd \jj.
\end{equation}
Then $F \in L^2 (\RR^3)$ and 
\begin{equation} 
\int_{\RR^3} |F(x)|^2 \dd x = \int_{\JJ} \int_{\TT^3} |G(x;\jj)|^2 \dd x \dd \jj .
\end{equation}
\end{lemma}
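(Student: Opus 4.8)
The statement is a Plancherel-type identity relating an $L^2$ function on $\RR^3$, built as a "continuous superposition" of Bloch waves $e^{i\jj\cdot x}$ with periodic coefficients $G(\cdot;\jj)$, to an $L^2$ norm on the product $\TT^3\times\JJ$. The natural approach is to disintegrate the integral over $\RR^3$ into a sum over the unit-cube lattice translates and reduce everything to two classical isometries: the Fourier series on $\TT^3$ and the Fourier transform (inverse) relating $\JJ=\TT^3$ to the lattice $\ZZ^3$. I would first treat a dense class of $G$ (say trigonometric polynomials in $x$ with smooth coefficients in $\jj$, or finite sums $G(x;\jj)=\sum_{\bk} g_\bk(\jj)e^{i\bk\cdot x}$ with $g_\bk\in C^\infty(\JJ)$ and finitely many nonzero terms) so that all manipulations with Fubini and interchange of sum/integral are unproblematic, and then pass to the limit by density and the isometry itself.

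\textbf{Key steps.} \emph{Step 1: decompose over lattice cells.} Write $\RR^3=\bigsqcup_{\bn\in\ZZ^3}(\bn+\TT^3)$ (identifying $\TT^3$ with the fundamental cube $[0,2\pi)^3$, up to the appropriate $2\pi$ normalization the paper uses), so that
\begin{equation}
\int_{\RR^3}|F(x)|^2\dd x=\sum_{\bn\in\ZZ^3}\int_{\TT^3}|F(y+2\pi\bn)|^2\dd y .
\end{equation}
Because $G(\cdot;\jj)$ is $2\pi$-periodic in $x$, one has $F(y+2\pi\bn)=\int_{\JJ}G(y;\jj)e^{i\jj\cdot y}e^{2\pi i\jj\cdot\bn}\dd\jj$, i.e. for each fixed $y$ the sequence $\bn\mapsto F(y+2\pi\bn)$ is exactly the sequence of Fourier coefficients (with respect to the variable $\jj\in\JJ=\TT^3$) of the periodic function $\jj\mapsto G(y;\jj)e^{i\jj\cdot y}$.

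\emph{Step 2: apply Plancherel on $\JJ$.} By the Parseval identity for Fourier series on $\JJ=\TT^3$ (applied for each fixed $y$, valid on the dense class where $\jj\mapsto G(y;\jj)$ is smooth),
\begin{equation}
\sum_{\bn\in\ZZ^3}|F(y+2\pi\bn)|^2=\int_{\JJ}|G(y;\jj)e^{i\jj\cdot y}|^2\dd\jj=\int_{\JJ}|G(y;\jj)|^2\dd\jj ,
\end{equation}
the last equality because $|e^{i\jj\cdot y}|=1$ (here $\jj\in\RR^3$ lifts $\JJ$, and $y\in\TT^3$ is fixed, so the exponential is a genuine complex number of modulus one). \emph{Step 3: integrate in $y$ and use Tonelli.} Integrating over $y\in\TT^3$ and swapping sum and integral (legitimate by Tonelli since everything is nonnegative) gives
\begin{equation}
\int_{\RR^3}|F(x)|^2\dd x=\int_{\TT^3}\int_{\JJ}|G(y;\jj)|^2\dd\jj\,\dd y=\|G\|_{L^2(\TT^3\times\JJ)}^2 ,
\end{equation}
which is the claimed identity; in particular this shows $F\in L^2(\RR^3)$ whenever $G\in L^2(\TT^3\times\JJ)$. \emph{Step 4: density.} The map $G\mapsto F$ is linear, and Steps 1--3 show it is an isometry from the dense subspace of "nice" $G$ into $L^2(\RR^3)$; hence it extends uniquely to an isometry on all of $L^2(\TT^3\times\JJ)$, and one checks the extension still agrees with the integral formula for $F$ (e.g. the integral defining $F(x)$ makes sense for a.e.\ $x$ once one knows the Fourier-coefficient sequence is in $\ell^2$, or simply by passing to the limit).

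\textbf{Main obstacle.} There is no deep difficulty here — the only thing to be careful about is the bookkeeping of the two different roles of the "torus": $\JJ=\TT^3$ is the Bloch/quasi-momentum parameter space whose dual lattice indexes the $\RR^3$-translates, while the periodicity variable $x\in\TT^3$ has its own Fourier series in $\bk\in\ZZ^3$ that plays no active role in the computation (it just sits inside the $y$-integral). Getting the normalization constants ($2\pi$'s, or whether $\JJ$ is taken as $[0,1)^3$ versus $[0,2\pi)^3$) consistent with the paper's conventions, and justifying the interchange of $\int_{\JJ}$ with $\int_{\RR^3}$ / with the sum over $\bn$ on a dense class before invoking density, is the entirety of the work.
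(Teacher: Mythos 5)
Your proof is correct, and it reaches the identity by a genuinely more streamlined route than the paper, although both arguments rest on the same core fact: after cutting $\RR^3$ into lattice cells, the values $F(y+2\pi\bn)$ are (up to normalization) the Fourier coefficients in $\jj$ of the fiber function $\jj\mapsto G(y;\jj)\e^{i\jj\cdot y}$, and Parseval on $\JJ$ converts the cell sum into $\int_{\JJ}|G(y;\jj)|^2\dd\jj$. The paper does not invoke fiberwise Parseval directly: it truncates to boxes $Q_R$ with $R=(2n+1)\pi$, expands $|F|^2$ as a double integral in $(\jj,\jj')$, recognizes the resulting Dirichlet-type sum $\sum_{\bk}\e^{i\bk\cdot(\jj-\jj')}$ as the partial Fourier projection $P_{|\bk|\leq R}$ acting on $\overline{G(x;\cdot)}\e^{-ix\cdot\jj'}$, and then passes to the limit $R\to\infty$ using dominated convergence in $x$ together with the weak convergence $P_{|\bk|\leq R}g\rightharpoonup g$ in $L^2(\JJ)$; this handles a general $G\in L^2(\TT^3\times\JJ)$ in one stroke but at the cost of the limit-interchange machinery. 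Your version buys simplicity (pointwise-in-$y$ Parseval plus Tonelli), and in fact your Step 4 density detour is dispensable: by Fubini, for a.e.\ $y$ the fiber $\jj\mapsto G(y;\jj)$ lies in $L^2(\JJ)\subset L^1(\JJ)$, so $F(y+2\pi\bn)$ is defined and Parseval applies to every such fiber with no smoothness needed; Tonelli then gives the identity for all $G\in L^2(\TT^3\times\JJ)$ directly (measurability of $F$ and the a.e.\ identification of the extension being the routine Cauchy--Schwarz/$L^1_{loc}$ check you sketch). One caveat you rightly flag and which you share with the paper: the clean constant-free identity requires the $x$-period and the $\jj$-torus to be dual (e.g.\ $2\pi$-periodic in $x$ with $\JJ$ of unit volume); the paper's later identification of $\JJ$ with $[-\pi,\pi]^3$ leaves this normalization implicit, and your bookkeeping remark is the honest way to handle it.
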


\begin{proof}
For any $\bk \in 2\pi \ZZ^3$ and $R \in \{ k : \exists n \in \NN \text{ such that } k = (2n+1) \pi \}  $  we denote by 
\begin{equation}
|\bk | = \sup_{i =1,2,3} |\bk_i| \,, \qquad Q_R = \{ \bk \in \RR^3 : |\bk| \leq R \} \,,
\end{equation}
and $\ZZ_R = Q_R \cap 2 \pi \ZZ^3$. Then
\begin{equation} 
\int_{Q_R} |F(x)|^2\dd x = \sum_{\bk \in \ZZ_R} \int_{\bk + \TT^3 } \left |\int_{\JJ} G(x;\jj) \e^{ix\cdot \jj} \dd\jj \right |^2\dd x \,.
\end{equation}
We now note that 
\begin{equation}
\int_{\bk +\TT^3} \left |\int_{\JJ} G(x;\jj) \e^{ix\cdot \jj} \dd\jj\right|^2\dd x =\int_{\bk +\TT^3} \int_{\JJ} G(x;\jj) \e^{ix\cdot \jj} \dd\jj \cdot \int_{\JJ} \overline{G(x;\jj')} \e^{-ix\cdot \jj'} \dd\jj' \dd x.
\end{equation}
Furthermore, we can change variables,  $x = y+ \bk$, and note that $G$ is $2\pi$-periodic, to get that this expression is equal to 
\begin{equation}
\int_{\bk +\TT^3} \left |\int_{\JJ} G(x;\jj) \e^{ix\cdot \jj} \dd\jj\right|^2\dd x =\int_{\TT^3} \int_{\JJ} G(x;\jj) \e^{ix\cdot \jj}  \cdot \int_{\JJ} \overline{G(x;\jj')} \e^{-ix\cdot \jj'} \e^{ i \bk \cdot (\jj-\jj')} \dd \jj' \dd \jj  \dd x
\end{equation}
Hence, reintroducing the summation in $\bk$, we get 
\begin{equation}
\int_{Q_R} |F(x)|^2\dd x = \int_{\TT^3} \int_{\JJ} G(x;\jj) \e^{ix\cdot \jj} \cdot  \int_{\JJ} \overline{G(x;\jj')} \e^{-ix\cdot \jj'} \sum_{k \in \ZZ^3_R}\e^{ i\bk \cdot (\jj- \jj')} \dd \jj' \dd \jj   \dd x.
\end{equation}
Now, for fixed $x \in \TT^3$, set $g(x;\jj')=\overline{G(x;\jj')} \e^{-ix\cdot \jj'}$. Then, note that  
\begin{equation}
\int_{\JJ}g(x;\jj') \sum_{\bk \in \ZZ^3_R} \e^{ i \bk \cdot (\jj- \jj')} \dd \jj' =(P_{|\bk| \leq R} g(x;\cdot))(\jj) \,, 
\end{equation}
where $P_{|\bk| \leq R}$ is the projection onto the first $R$ Fourier modes. Hence, we have 
\begin{equation}
\label{bloch inequality}
\int_{B_R} |F(x)|^2\dd x = \int_{\JJ} \int_{\TT^3}  G(x;\jj) \e^{ix\cdot \jj} \cdot (P_{|\bk|\leq R}g(x;\cdot))(\jj) \dd x \dd \jj \,.
\end{equation}
Finally, note that by definition of $g$ we have 
\begin{equation}
\int_{\JJ}|G(x;\jj) \e^{ix\cdot \jj} \cdot (P_{|\bk|\leq R}g(x;\cdot))(\jj)| \dd \jj \leq \int_{\JJ}|G(x;\jj)|^2 \dd \jj \in L^2(\TT^3) 
\end{equation}
so that taking the limit $R \to \infty$ in \eqref{bloch inequality}, by dominated convergence theorem and again the definition of $g$  we have 
\begin{equation}
\lim_{R\to \infty } \int_{Q_R} |F(x)|^2\dd x = \int_{\TT^3} \lim_{R \to \infty} \int_{\JJ}  G(x;\jj) \e^{ix\cdot \jj} \cdot  (P_{|\bk|\leq R}g(x;\cdot))(\jj) \dd x \dd \jj .
\end{equation}
Observing that for every fixed $x \in \TT^3$ we have  
\begin{equation}
(P_{|\bk|\leq R}g(x;\cdot))(\jj) \rightharpoonup \overline{G(x;\jj)} \e^{- ix\cdot \jj}
\end{equation}
weakly in $L^2(\mathbb{J})$ as $R \to \infty$, we conclude the proof.
\end{proof}

\subsection{Proof of Theorem \ref{thm:Section2}} \label{subsec:proof-theorem}
We now have all the tools needed to provide a proof of the main result of this section. To avoid confusion, we once again specify the dependence on $\jj,\eps$ of the operator $\cL$ in \eqref{eq:mainop} and write
\begin{equation}\label{eq:L-j}
\cL(\jj,\eps)H=(\nabla +i\jj) \times (U \times H)+ \eps(\nabla +i \jj)^2 H.
\end{equation}
\begin{proof}[Proof of Theorem \ref{thm:Section2}]
Let us start off the proof with a computation. Indeed, suppose that we have constructed a solution $f \in L^\infty_{loc}  ( 0, \infty;L^2(\RR^3))$ to the equation 
\begin{equation}
\partial_t f=\nabla \times (U\times f)+\eps \Delta f
\end{equation}
for some $\eps \in [\zeta,1]$, and some velocity field $U$. If we then set $g(t,x)=f(t,\zeta^{-\frac{n}{2}}x)$, and define $V=\zeta^\frac{n}{2}U(x\zeta^{-\frac{n}{2}})$, it holds that $g$ solves 
\begin{equation}
\label{eq:rescalingEq}
\partial_t g=\nabla \times (V \times g)+\eps \zeta^{n} \Delta g.
\end{equation}
From this, we deduce that as long as we are able to construct exponentially growing solutions to \eqref{passive-vector} with fixed velocity field $U$, and diffusivity in the \emph{compact} set $[\zeta,1]$, a simple rescaling argument allows us to construct solutions with magnetic diffusivity $\eps \zeta^n$ and velocity field $\zeta^\frac{n}{2}U(x\zeta^{-\frac{n}{2}})$. We thus aim to use this observation to reduce to study the case $\eps  \in [\zeta, 1]$.

\medskip

\emph{Step 1.} We firstly aim at proving the exponential growth of the solution $S_\eps^{U_{n_\eps}} (t) F^\eps$.
To do so, note that the assumptions of Lemma \ref{lemma:continuity of resolvent} are satisfied for any $U = \nabla \times \Psi \in W^{1,\infty}(\TT^3)$ that is alpha-unstable, thanks to Proposition \ref{prop:growing}.  Hence, by Lemma \ref{lemma:continuity of resolvent}, there exist $J, \eta>0$, so that we may define the function
\begin{equation}
H(x;\jj,\eps)=\frac{1}{2\pi i}\int_{\Gamma} (\mu-\cL(\jj,\eps))^{-1}H_{ \jj_\star} \dd\mu
\end{equation}
for $|\jj-\jj_\star|<J$, $|\eps-1|<\eta$, and 
\begin{equation}
H(x;\jj,\eps)=\frac{1}{2\pi i}\int_{\Gamma} (\mu-\cL(\jj,\eps))^{-1}\overline{H}_{ \jj_\star} \dd\mu
\end{equation}
for $|\jj+\jj_\star|<J$, $|\eps-1|<\eta$. We cliam that these are eigenfunctions of $\cL(\jj,\eps)$. Indeed, by Lemma \ref{lemma:continuity of resolvent}, as soon as we know that $\overline{H}_{\jj_\star}$ is an eigenfunction of $\cL(-\jj_{\star},\eps)$, it follows that $H(x;\jj,\eps)$ are eigenfunctions of $\cL(\jj,\eps)$ with simple eigenvalues $p(\jj,\eps)$ of real part at least $\frac{1}{2}\Re(p(\jj_\star))$. Furthermore, it holds that for $|\jj-\jj_\star|<J$, there holds 
\begin{equation}
\overline{H(x,\jj,\eps)}=H(x,-\jj,\eps), \quad \overline{p(\jj,\eps)}=p(-\jj,\eps).
\end{equation}
Indeed, to see this, note that if $\cL(\jj,\eps)H=\lambda H$, then taking complex conjugates, and noting that $\overline{\cL(\jj,\eps)}=\cL(-\jj,\eps)$, there holds $\cL(-\jj,\eps)\overline{H}=\overline{\lambda}\overline{H}$. Hence, $\overline{H}_{\jj_\star}$ is a simple eigenfunction of $\cL(-\jj_\star,\eps)$, and therefore for $|\jj+\jj_{\star}|<\eta$, $H(x,\jj,\eps)$ is still a simple eigenfunction of $\cL(\jj,\eps)$ with eigenvalue $p(\jj,\eps)=\overline{p(-\jj,\eps)}$. Therefore, we may now fix any $\zeta \in [1-\eta, 1)$, and let $\eps \in (\zeta^{n_\eps+1}, \zeta^{n_\eps}]$ for some $n_\eps \in \NN_0$. We define the Ansatz solution $B^\eps$ of \eqref{passive-vector} with velocity field  $\zeta^{n/2} U (\frac{x}{\zeta^{n/2}})$ as follows. Note first that $\frac{\eps}{\zeta^n} \in (\zeta,1]$. Thus, it holds    
$$
\e^{\cL(\jj,\frac{\eps}{\zeta^n})t}H\left(x;\jj,\frac{\eps}{\zeta^n}\right)=\e^{p(\jj,\frac{\eps}{\zeta^n}) t}H\left(x;\jj,\frac{\eps}{\zeta^n}\right)
$$ 
by our construction.
Then we set 
$$ 
B^\eps(t,x)= \frac{\zeta^{- 3n/2}}{\| H(\cdot;\cdot,\frac{\eps}{\zeta^n}) \|_{L^2 (\TT^3 \times Q_J (\jj_\star) \cup Q_J(-\jj_\star)) }} \int_{Q_J (\jj_\star) \cup Q_J(-\jj_\star)}\e^{p(\jj,\frac{\eps}{\zeta^n})t}H\left(\frac{x}{\zeta^{n/2}};\jj,\frac{\eps}{\zeta^n}\right)\e^{i\jj \zeta^{-n/2} \cdot x} \dd\jj \,. 
$$
Note that for all $t \geq 0, x \in \RR^3$, $B^\eps(t,x)$ is \emph{real valued}, since $\overline{p(\jj,\frac{\eps}{\zeta^n})}=p(-\jj, \frac{\eps}{\zeta^n})$, $\overline{H(x,\jj,\eps  \zeta^{-n})}=H(x,-\jj,\eps \zeta^{-n})$.
Since  $\e^{p(\jj,\frac{\eps}{\zeta^n})t}H(x;\jj,\frac{\eps}{\zeta^n})\e^{i\jj \cdot x}$ is a solution of \eqref{passive-vector}  with magnetic diffusivity $\eps \zeta^{-n} $ and velocity field $U$ for every $\jj$, 
it follows that 
$$ 
\int_{Q_J (\jj_\star) \cup Q_J(-\jj_\star)}\e^{p(\jj,\frac{\eps}{\zeta^n})t}H\left(x ;\jj,\frac{\eps}{\zeta^n}\right)\e^{i\jj\cdot x} \dd\jj \,
$$ 
is as well a  solution of \eqref{passive-vector} with diffusivity $\eps \zeta^{-n}$ and velocity field $U$. Hence, the computation \eqref{eq:rescalingEq} at the beginning of the proof implies that $B^\eps$ is a solution to \eqref{passive-vector} with velocity field $\zeta^{n/2} U (\frac{x}{\zeta^{n/2}})$ and diffusivity parameter $\eps$. Furthermore, Lemma \ref{lemma:continuity of resolvent} further implies that $\Re(p(\jj,\frac{\eps}{\zeta^n}) \geq \frac{1}{2}\Re(p(\jj_\star))$. Hence, by this inequality, Lemma \ref{Lemma: Bloch1} and a change of variable, it holds that
\begin{align*}
\|B^\eps(t)\|_{L^2}^2 
& = \frac{\zeta^{- 3n}}{\| H (\cdot; \cdot , \frac{\eps}{\zeta^n}) \|_{L^2  }^2} \int_{\RR^3}\bigg|\int_{Q_J (\jj_\star) \cup Q_J(-\jj_\star)}H\left(x \zeta^{-n/2};\jj,\frac{\eps}{\zeta^n}\right)\e^{i\jj \zeta^{-n/2} \cdot x} \dd\jj\bigg|^2 \dd x \\
& = \frac{1}{\| H (\cdot ; \cdot , \frac{\eps}{\zeta^n}) \|_{L^2  }^2} \int_{\RR^3}\bigg|\int_{Q_J (\jj_\star) \cup Q_J(-\jj_\star)}H\left(x ; \jj,\frac{\eps}{\zeta^n}\right)\e^{i\jj \cdot x} \e^{p(\jj,\frac{\eps}{\zeta^n})t} \dd\jj\bigg|^2 \dd x
\\
& \geq \e^{\frac{1}{2}\Re(p(\jj_\star))t} \frac{1}{\| H (\cdot ; \cdot , \frac{\eps}{\zeta^n}) \|_{L^2  }^2} \int_{Q_J(\jj_\star) \cup Q_J(-\jj_\star)}\int_{\TT^3}\bigg|H\left(x ; \jj,\frac{\eps}{\zeta^n}\right)\bigg|^2 \dd x \dd\jj \\
&=\e^{\frac{1}{2}\Re(p(\jj_\star))t} \,.
\end{align*}
Here we identified the $3$-dimensional torus in $\jj$ with $[-\pi,\pi]^3$. Since $\mathds{1}_{Q_J(\jj_\star)\cup Q_J(-\jj_\star)}H(x;\jj,\frac{\eps}{\zeta^n})$ has compact support in this region, it may be identified with a $2\pi$-periodic function upon extending it to all of $\mathbb{R}^3$, and hence we may indeed apply Lemma \ref{Lemma: Bloch1}. Applying Lemma \ref{Lemma: Bloch1} once again to deduce $\| B^{\eps} (0) \|_{L^2} =1$ we conclude the proof of the first property.

\medskip

\emph{Step 2.} We now move on to proving the second part. Note that it suffices to prove the claim for $\eps \in [\zeta,1]$. Indeed, suppose that it is proven in this case. Then, for any $\eps \in (\zeta^{n+1},\zeta^n]$, $F^\eps $ is nothing but 
\begin{equation}
F^\eps(x)=\zeta^{-\frac{3n}{2}}F^\frac{\eps}{\zeta^n}(x\zeta^{-\frac{n}{2}}).
\end{equation}
Hence, changing variables, for any $R>0$ so that $\|F^\frac{\eps}{\zeta^n}\|_{L^2(Q_R)}\geq 1-\delta$, the same holds for $F^\eps$.
Hence, let $\eps \in [\zeta,1]$. Define now a map  $I:[\zeta,1] \times \RR_+$ to $ L^2(\RR^3)$ given by
\begin{equation}
I:(\eps,R) \mapsto \int_{E_R}|F^\eps(x,0)|^2 \dd x.
\end{equation}
Note that for fixed $\eps$, $I$ is increasing in $R$. We claim that $\inf_{\eps \in [\zeta,1]}I(\eps,R) \to 1$ as $R \to \infty$. Indeed, suppose that there exists some $\eps_m, R_m$ where $R_m \to \infty$, so that $I(\eps_m,R_m)<1-\delta$ for all $m$. Then, there is a non-relabeled converging subsequence $\eps_m \to \bar \eps \in [\zeta,1]$. Pick now $\bar R$ so that $I(\bar \eps, \bar R)>1-\frac{\delta}{2}$. Then, we have 
\begin{equation*}
I(\eps_m,R_m)\geq I(\bar \eps,\bar R )-|I(\bar \eps, \bar R )-I(\eps_m,R_m)|
\end{equation*}
For $m$ large enough, $R_m \geq \bar R$, and $I(\eps_m,R_m)<I(\bar \eps,\bar R)$ by assumption. Thus, 
$$
|I(\bar \eps, \bar R)-I(\eps_m,R_m)|=I(\bar \eps,\bar R)-I(\eps_m,R_m)\leq I(\bar \eps, R_m) -I(\eps_m,R_m).
$$
Finally, we note that 
\begin{align}
|I(\eps_m,R_m)-I(\bar \eps,R_m)|
&\leq \int_{E_{R_m}}|F^{\eps_m}(x)+F^{\bar\eps}(x)||F^{\eps_m}(x)-F^{\bar\eps}(x)|\dd x \notag\\
&\leq (\|F^{\eps_m}\|_{L^2(\RR^3)}+\|F^{\bar\eps}\|_{L^2(\RR^3)})\|F^{\eps_m}-F^{\bar\eps}\|_{L^2(\RR^3)}.\label{eq:differenceIs}
\end{align}
Since the $F^\eps$ are normalized, it remains to compute 
\begin{align*}
\|F^{\eps_m}-F^{\bar\eps}\|_{L^2(\RR^3)}^2=\int_{\RR^3}\bigg|\int_{Q_J(\jj_\star)\cup Q_J(-\jj_\star)}\left[\frac{H(x;\jj,\eps_m)}{\| H (\cdot; \cdot , \eps_m) \|_{L^2}}-\frac{H(x;\jj,\bar \eps)}{\| H (\cdot; \cdot , \bar \eps) \|_{L^2}}\right]\e^{i \jj \cdot x} \dd\jj\bigg| ^2 \dd x.
\end{align*}
Using  Lemma \ref{Lemma: Bloch1}, this is nothing but 
\begin{align*}
\int_{Q_J(\jj_\star)\cup Q_J(-\jj_\star)}\int_{\TT^3}\bigg|\frac{H(x ;\jj,\eps_m)}{\| H (\cdot; \cdot , \eps_m) \|_{L^2}}-\frac{H(x ;\jj,\bar \eps)}{\| H (\cdot; \cdot , \bar \eps) \|_{L^2}}\bigg|^2 \dd x \dd\jj.
\end{align*}
However, Lemma \ref{lemma:continuity of resolvent} implies that there exists some constant $C>0$ so that 
\begin{equation}
\|H(x ;\jj,\eps_m)-H(x ;\jj,\bar \eps)\|_{L^2(\TT^3)} \leq C|\eps_m-\bar \eps|
\end{equation}
for any $\jj \in Q_{J}(\jj_\star) \cup Q_{J}(-\jj_\star)$, $\eps_m,\bar \eps \in [1-\eta,1]$. Therefore, by the reverse triangle inequality, it further holds that
\begin{equation}
|\| H (\cdot; \cdot , \eps_m) \|_{L^2(\TT^3\times Q_J(\jj_\star) \cup Q_J(-\jj_\star))}- \| H (\cdot; \cdot , \bar\eps) \|_{L^2(\TT^3\times Q_J(\jj_\star) \cup Q_J(-\jj_\star) )}|\lesssim |Q_J (\jj_\star) \cup Q_J(-\jj_\star)|^\frac{1}{2}|\eps_m -\bar\eps|.
\end{equation}
Therefore, we estimate
\begin{align*}
\int_{Q_J(\jj_\star)\cup Q_J(-\jj_\star)}\int_{\TT^3}\bigg|\frac{H(x ;\jj,\eps_m)}{\| H (\cdot; \cdot , \eps_m) \|_{L^2  }^2}&  -\frac{H(x ;\jj,\bar \eps)}{\| H (\cdot; \cdot , \bar \eps) \|_{L^2  }^2}\bigg|^2 \dd x \dd\jj 
\\
&\leq \int_{Q_{J}(\jj_\star)\cup Q_{J}(-\jj_\star)}\int_{\TT^3} \bigg|\frac{H(x ;\jj,\eps_m)-H(x ;\jj,\bar\eps)}{\| H (\cdot; \cdot , \bar \eps) \|_{L^2}}\bigg|^2 \dd x \dd\jj
\\
& \quad +\|H(\cdot ;\cdot, \eps_m)\|_{L^2}^2\left(\frac{1}{\| H (\cdot; \cdot , \eps_m) \|_{L^2  }^2}-\frac{1}{\| H (\cdot; \cdot , \bar \eps) \|_{L^2  }^2}\right)\\
& \lesssim\frac{|Q_J(\jj_\star) \cup Q_J(-\jj_\star)|}{\| H (\cdot; \cdot , \bar \eps) \|_{L^2}}|\eps_m -\bar \eps|^2.
\end{align*}
Taking $m \to \infty$, we thus see from \eqref{eq:differenceIs} that eventually 
\begin{equation}
I(\eps_m,R_m) \geq 1-\frac{\delta}{2}
\end{equation}
which yields a contradiction. 
\end{proof}

\section{Properties and energy estimates for the kinematic dynamo equation} \label{sec:energy-estimates}
This section is devoted to the main properties of the kinematic dynamo equation \eqref{passive-vector}, including well-posedness, stability and (weighted) energy estimates. Although some results hold in greater generality, we restrict ourselves to the setting in which we are given a time-independent, divergence-free streamfunction  $\psi\in C^\infty(\TT^3)$ and the associated incompressible velocity field $u=\nabla\times\psi$. When considering \eqref{passive-vector} on $\RR^3$, such functions are periodically extended to the whole space to functions that are bounded, along with their derivatives.

\subsection{Standard energy estimates and uniqueness}
When $\eps>0$, well-posedness for \eqref{passive-vector} holds in the following sense.
\begin{lemma} \label{lemma:wellposed}
There exists a unique weak solution $B\in L^\infty_{loc}(0,\infty;L^2)\cap L^2_{loc}(0,\infty;\dot{H}^1)$ to \eqref{passive-vector} with initial condition $B_{\ini}$. In fact, there holds 
the energy estimate
\begin{equation}\label{eq:energyestim}
  \| S_\eps^u(t)  B_{\ini} \|_{L^2 }^2
  +\frac{\eps}{2}\int_0^t\|S_\eps^u(\tau)  B_{\ini} \|_{\dot{H}^1 }^2\dd \tau\leq \| B_{\ini}  \|^2_{L^2}\exp\left(\frac{1}{\eps}\|u\|^2_{L^\infty} t\right),\qquad \forall t\geq 0, 
\end{equation}
the $\eps$-independent growth bound
\begin{equation}\label{eq:L2growth0}
  \| S_\eps^u(t)  B_{\ini} \|_{L^2 } \leq \| B_{\ini} \|_{L^2} \exp \left(\| \nabla u \|_{L^\infty} t\right),\qquad \forall t\geq 0, 
\end{equation}
and the continuous dependence estimate
\begin{equation}\label{eq:contdep}
  \| S_\eps^u(t)  B_{\ini,1}-S_\eps^u(t)  B_{\ini,2} \|_{L^2 } \leq \| B_{\ini,1}-  B_{\ini,2}  \|_{L^2}\exp\left(\frac{1}{2\eps}\|u\|^2_{L^\infty} t\right),\qquad \forall t\geq 0.
\end{equation}
Finally, if $u$ is smooth then the corresponding solution is smooth as well.
\end{lemma}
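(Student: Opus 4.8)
The plan is to extract everything from a single energy identity and then obtain uniqueness, continuous dependence, and—via a standard approximation scheme—existence. For a sufficiently regular solution, use the vector identity $\nabla\times(u\times B)=(B\cdot\nabla)u-(u\cdot\nabla)B$ (valid since $\nabla\cdot u=\nabla\cdot B=0$), test \eqref{passive-vector} against $B$, and integrate by parts. Since $\langle (u\cdot\nabla)B,B\rangle=\tfrac12\int_{\RR^3} u\cdot\nabla|B|^2=-\tfrac12\int_{\RR^3}(\nabla\cdot u)|B|^2=0$, one gets
\begin{equation*}
\frac12\frac{\dd}{\dd t}\|B\|_{L^2}^2+\eps\|\nabla B\|_{L^2}^2=\langle (B\cdot\nabla)u,B\rangle .
\end{equation*}
Bounding the right-hand side in two complementary ways produces the two estimates. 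First, $|\langle (B\cdot\nabla)u,B\rangle|\le\|\nabla u\|_{L^\infty}\|B\|_{L^2}^2$, so discarding the nonnegative dissipation term and invoking Grönwall yields the $\eps$-independent growth bound \eqref{eq:L2growth0}. Second, integrating by parts once more gives $\langle (B\cdot\nabla)u,B\rangle=-\int_{\RR^3} u_iB_j\partial_jB_i$, hence $|\langle (B\cdot\nabla)u,B\rangle|\le\|u\|_{L^\infty}\|B\|_{L^2}\|\nabla B\|_{L^2}\le\tfrac{\eps}{2}\|\nabla B\|_{L^2}^2+\tfrac{1}{2\eps}\|u\|_{L^\infty}^2\|B\|_{L^2}^2$; absorbing the gradient term on the left and applying Grönwall (then integrating in time) gives \eqref{eq:energyestim}.

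For uniqueness and continuous dependence I would use linearity: the difference $B=S_\eps^u(t)B_{\ini,1}-S_\eps^u(t)B_{\ini,2}$ solves \eqref{passive-vector} with datum $B_{\ini,1}-B_{\ini,2}$, so the same computation gives $\tfrac{\dd}{\dd t}\|B\|_{L^2}^2\le\tfrac{1}{\eps}\|u\|_{L^\infty}^2\|B\|_{L^2}^2$, and Grönwall produces \eqref{eq:contdep}; the special case $B_{\ini,1}=B_{\ini,2}$ gives uniqueness. To make these manipulations legitimate at the level of weak solutions, one first notes from the equation that $\partial_tB\in L^2_{loc}(0,\infty;H^{-1})$, since both $\nabla\times(u\times\cdot)$ and $\eps\Delta$ map $\dot H^1$ boundedly into $H^{-1}$; then the Lions–Magenes lemma gives $B\in C_{loc}(0,\infty;L^2)$ together with $\tfrac{\dd}{\dd t}\|B\|_{L^2}^2=2\langle\partial_tB,B\rangle$, which justifies the energy identity above.

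Existence I would obtain by a Galerkin scheme on the Fourier modes of $\TT^3$ (the coefficient functions being bounded periodic, after the rescaling of Section~\ref{sec:rescaled}): the truncated systems are linear ODEs with global solutions, the a priori bounds above hold uniformly in the truncation parameter, and Aubin–Lions compactness lets one pass to the limit to a weak solution. Equivalently—and this is the cleaner route on $\RR^3$, where the Sobolev embedding is not compact—one treats $\eps\Delta$ as the generator of an analytic semigroup on $L^2(\RR^3)$ and $B\mapsto\nabla\times(u\times B)$ as an $\eps\Delta$-relatively bounded perturbation with relative bound zero (it maps $\dot H^2\cap L^2$ into $L^2$ with norm $\lesssim\|u\|_{W^{1,\infty}}$ and, by interpolation, satisfies $\|\nabla\times(u\times B)\|_{L^2}\le\delta\|\Delta B\|_{L^2}+C_\delta\|B\|_{L^2}$), so the perturbed operator generates a $C_0$-semigroup by the Kato–Rellich/perturbation theorem; bootstrapping the mild solution with parabolic smoothing gives the asserted regularity. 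The divergence-free constraint is automatically preserved: taking the divergence of \eqref{passive-vector} shows $\nabla\cdot B$ solves the heat equation $\partial_t(\nabla\cdot B)=\eps\Delta(\nabla\cdot B)$ with zero initial datum. Finally, if $u$ is smooth, differentiating the equation and repeating the energy estimate for $\partial^\alpha B$ gives, inductively, local-in-time control of all $H^k$ norms, and parabolic regularity for the linear equation with smooth coefficients upgrades this to smoothness of $B$.

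\textbf{Main obstacle.} Everything past the energy identity is routine, so the only genuine work is the existence step: reconciling the approximation scheme with the divergence-free structure and securing enough compactness to pass to the limit. This is immediate on $\TT^3$ via Fourier truncation and Aubin–Lions, but on the whole space $\RR^3$ the lack of a compact Sobolev embedding forces the semigroup-perturbation argument (or a localization/truncation argument) instead; once a solution is produced, the uniqueness and the three estimates follow verbatim from the single identity above, the unboundedness of the domain being harmless because $u$ is bounded.
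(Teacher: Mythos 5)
Your proposal is correct and follows essentially the same route as the paper: the same energy identity $\tfrac12\tfrac{\dd}{\dd t}\|B\|_{L^2}^2+\eps\|\nabla B\|_{L^2}^2=\langle(B\cdot\nabla)u,B\rangle$, the same two estimates of the right-hand side (by $\|\nabla u\|_{L^\infty}$ for \eqref{eq:L2growth0} and, after one more integration by parts, by $\|u\|_{L^\infty}\|B\|_{L^2}\|\nabla B\|_{L^2}$ plus Young/Grönwall for \eqref{eq:energyestim}), and linearity for \eqref{eq:contdep} and uniqueness. The only difference is that you spell out the existence step (Galerkin/Aubin--Lions on the torus, or the relatively bounded semigroup perturbation of $\eps\Delta$ on $\RR^3$) and the Lions--Magenes justification of the identity, which the paper compresses into ``a standard compactness argument'' and ``formal estimates justified in a suitable approximation scheme.''
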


\begin{proof}
The existence is a standard compactness argument. 
We   now  restrict ourselves to providing formal estimates, which can be justified in a suitable approximation scheme. Also, it is clear that \eqref{eq:contdep} follows from \eqref{eq:energyestim} by linearity.

Let $B\in L^\infty_tL^2_x\cap L^2_t\dot{H}^1_x$ be a weak solution to \eqref{passive-vector}, with initial datum $B_{\ini}$. Testing the equation with $B$ in $L^2(\RR^3)$ gives the identity
\begin{equation}\label{eq:step0}
    \frac12\frac{\dd}{\dd t} \|B\|^2_{L^2} +\eps \|\nabla B\|^2_{L^2} = \int_{\RR^3} (B\cdot \nabla)u \cdot B.
\end{equation}
Note that \eqref{eq:L2growth0} can be derived from the above identity, by estimating the right-hand side as 
\begin{equation}
  \int_{\RR^3} (B\cdot \nabla)u \cdot B\leq \|\nabla u\|_{L^\infty}\|B\|^2_{L^2}
\end{equation}
Instead, if we integrate by parts in \eqref{eq:step0} we obtain
\begin{equation}
   \frac12\frac{\dd}{\dd t} \|B\|^2_{L^2} +\eps \|\nabla B\|^2_{L^2} =- \int_{\RR^3} B \otimes u : \nabla B \leq\|u\|_{L^\infty} \|B\|_{L^2} \|\nabla B\|_{L^2}.
\end{equation}
Thus
\begin{equation}
   \frac{\dd}{\dd t} \|B\|^2_{L^2} +\eps \|\nabla B\|^2_{L^2} \leq \frac1\eps\|u\|^2_{L^\infty} \|B\|^2_{L^2},
\end{equation}
and the first part of \eqref{eq:energyestim} follows from an application of the Grönwall lemma. Integrating now the above equation and using \eqref{eq:energyestim} we deduce that
\begin{align}
   \eps \int_0^t\|\nabla B(\tau)\|^2_{L^2} 
   &\leq \| B_{\ini}\|^2_{L^2} + \frac1\eps\|u\|^2_{L^\infty} \int_0^t \|B(\tau)\|^2_{L^2} \dd \tau 
   \leq 2\| B_{\ini}  \|^2_{L^2}\exp\left(\frac{1}{\eps}\|u\|^2_{L^\infty} t\right) ,
\end{align}
which proves \eqref{eq:energyestim}. Finally, if $u$ is smooth, we can differentiate \eqref{passive-vector} and obtain the smoothness of the solution as well.
\end{proof}

\subsection{Weighted energy estimates and stability}
We now establish some more refined estimates which will be useful in the sequel. Roughly speaking, these give quantitative control over the solution and its tails at spatial infinity. The main result reads as follows.

\begin{lemma}\label{lemma:concentrationSummary}
    Fix a positive $\varphi \in C_c^\infty (\RR^3)$ with $\|\varphi\|_{L^\infty}=1$ and $\| \nabla \varphi \|_{W^{1,\infty}} \leq 1$, and define $u^\varphi=\nabla \times(\psi\varphi)$.
    There exists a constant $C_\psi=C(\|\psi\|_{W^{2,\infty}})>0$, which depends continuously on its argument, such that 
    \begin{equation}\label{eq:tail0}
        \| \varphi S^u_\eps(t)B_{\ini}\|^2_{L^2}\leq C_\psi\left[\|\varphi B_{\ini}\|^2_{L^2}+
        \|\nabla \varphi\|_{L^\infty} \| B_{\ini} \|_{L^2}^2 \right]  \exp \left(C_\psi t\right),
    \end{equation}
    and
        \begin{equation}\label{eq:tail1}
        \| (1-\varphi) S^u_\eps(t)B_{\ini}\|^2_{L^2}\leq C_\psi\left[\|(1-\varphi) B_{\ini}\|^2_{L^2}+
        \|\nabla \varphi\|_{L^\infty} \| B_{\ini} \|_{L^2}^2 \right]  \exp \left(C_\psi t\right),
    \end{equation}  
and
\begin{equation}\label{eq:tail2}
    \| S^u_\eps (t)B_{\ini} - S^{u_\varphi}_\eps (t) B_{\ini} \|_{L^2 (\RR^3)}^2 \leq \frac{C_\psi}{\eps}\left[\|(1-\varphi) B_{\ini}\|^2_{L^2}+
        \|\nabla \varphi\|_{W^{1,\infty}} \| B_{\ini} \|_{L^2}^2 \right]  \exp \left(C_\psi t\right) ,
\end{equation}
for every $t\geq 0$.
\end{lemma}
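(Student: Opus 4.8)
The plan is to derive all three bounds from \emph{weighted} $L^2$ energy estimates for \eqref{passive-vector}, exactly in the spirit of Lemma~\ref{lemma:wellposed}; the computations below are formal but justified in the same approximation scheme, so one may also assume $B_{\ini}$ smooth and conclude by density. Throughout I use that for divergence-free $v$ and $B$ one has $\nabla\times(v\times B)=(B\cdot\nabla)v-(v\cdot\nabla)B$.

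The first step is a single \emph{weighted estimate} that yields both \eqref{eq:tail0} and \eqref{eq:tail1}: for any smooth divergence-free $v$ and any Lipschitz weight $w$ with $0\le w\le1$ and $\|\nabla w\|_{L^\infty}\le1$, writing $B(t)=S^v_\eps(t)B_{\ini}$, I test \eqref{passive-vector} (with $u=v$) against $w^2B$. The stretching term $\int w^2B\cdot(B\cdot\nabla)v$ is $\le\|\nabla v\|_{L^\infty}\|wB\|_{L^2}^2$; the transport term, after an integration by parts and using $\nabla\cdot v=0$, equals $\int w(v\cdot\nabla w)|B|^2$, hence is $\le\|v\|_{L^\infty}\|\nabla w\|_{L^\infty}\|B\|_{L^2}^2$ (since $w\le1$); the diffusion term, after integrating by parts, equals $-\eps\|w\nabla B\|_{L^2}^2-2\eps\int w(\nabla w\otimes B):\nabla B$, and Young's inequality bounds the cross term by $\eps\|w\nabla B\|_{L^2}^2+\eps\|\,|\nabla w|\,B\|_{L^2}^2$, so the net diffusion contribution is $\le\eps\|\nabla w\|_{L^\infty}^2\|B\|_{L^2}^2\le\|\nabla w\|_{L^\infty}\|B\|_{L^2}^2$ (using $\eps\le1$ and $\|\nabla w\|_{L^\infty}\le1$). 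Collecting these and inserting the a priori bound $\|B(t)\|_{L^2}^2\le\|B_{\ini}\|_{L^2}^2e^{2\|\nabla v\|_{L^\infty}t}$ from \eqref{eq:L2growth0}, Grönwall's inequality (together with $t\le e^t$) gives
\[
\|wB(t)\|_{L^2}^2\le C\big(\|wB_{\ini}\|_{L^2}^2+\|\nabla w\|_{L^\infty}\|B_{\ini}\|_{L^2}^2\big)e^{Ct},
\]
with $C$ depending continuously on $\|v\|_{W^{1,\infty}}$. Taking $v=u$, $w=\varphi$ proves \eqref{eq:tail0}; taking $v=u$, $w=1-\varphi$ (note $0\le1-\varphi\le1$ and $\nabla(1-\varphi)=-\nabla\varphi$) proves \eqref{eq:tail1}, and in both cases $\|u\|_{W^{1,\infty}}\le C\|\psi\|_{W^{2,\infty}}$, so $C=C_\psi$ as required.

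For \eqref{eq:tail2}, set $B^\varphi(t)=S^{u^\varphi}_\eps(t)B_{\ini}$ and $D(t)=S^u_\eps(t)B_{\ini}-B^\varphi(t)$, which solves $\partial_tD=\nabla\times(u\times D)+\eps\Delta D+\nabla\times\big((u-u^\varphi)\times B^\varphi\big)$ with $D|_{t=0}=0$. Testing against $D$, the stretching term is $\le\|\nabla u\|_{L^\infty}\|D\|_{L^2}^2$, the transport part vanishes, the diffusion contributes $-\eps\|\nabla D\|_{L^2}^2$, and the source term, integrated by parts onto $\nabla\times D$ and estimated with $\|\nabla\times D\|_{L^2}\le\|\nabla D\|_{L^2}$ (valid since $D$ is divergence-free) and an $\eps$-weighted Young inequality, is $\le\frac{\eps}{2}\|\nabla D\|_{L^2}^2+\frac{1}{2\eps}\|(u-u^\varphi)B^\varphi\|_{L^2}^2$, so the first piece is absorbed. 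Using $u-u^\varphi=\nabla\times(\psi(1-\varphi))=(1-\varphi)u-\nabla\varphi\times\psi$ one gets $|u-u^\varphi|\le\|u\|_{L^\infty}(1-\varphi)+\|\psi\|_{L^\infty}|\nabla\varphi|$, whence
\[
\|(u-u^\varphi)B^\varphi\|_{L^2}^2\lesssim\|u\|_{L^\infty}^2\,\|(1-\varphi)B^\varphi\|_{L^2}^2+\|\psi\|_{L^\infty}^2\,\|\nabla\varphi\|_{L^\infty}^2\,\|B^\varphi\|_{L^2}^2 .
\]
The first factor is controlled by the weighted estimate of the previous paragraph applied to the velocity $u^\varphi$ and weight $1-\varphi$, and the second by \eqref{eq:L2growth0} for $B^\varphi$; the point is that $u^\varphi=\nabla\times(\psi\varphi)$ satisfies $\|u^\varphi\|_{W^{1,\infty}}\le C\|\psi\varphi\|_{W^{2,\infty}}\le C\|\psi\|_{W^{2,\infty}}$ \emph{uniformly in $\varphi$}, precisely because $\|\varphi\|_{L^\infty}\le1$ and $\|\nabla\varphi\|_{W^{1,\infty}}\le1$. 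A final Grönwall step, using $\|\nabla\varphi\|_{L^\infty}^2\le\|\nabla\varphi\|_{L^\infty}\le\|\nabla\varphi\|_{W^{1,\infty}}$ and inserting the $1/\eps$ prefactor, yields \eqref{eq:tail2}.

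The argument is structurally routine; the one place requiring care is the bookkeeping that keeps $C_\psi$ depending only, and continuously, on $\|\psi\|_{W^{2,\infty}}$ — that is, checking that every velocity field entering the estimates ($u$ and $u^\varphi$) has $W^{1,\infty}$ norm bounded by a continuous function of $\|\psi\|_{W^{2,\infty}}$ alone, which is exactly why the normalizations $\|\varphi\|_{L^\infty}\le1$ and $\|\nabla\varphi\|_{W^{1,\infty}}\le1$ are built into the hypotheses. The $1/\eps$ loss in \eqref{eq:tail2} is genuine: it enters through the Young inequality on the source term and cannot be avoided without moving a derivative onto $B^\varphi$, which would then force a dependence on $\nabla^2\varphi$ in the final bound.
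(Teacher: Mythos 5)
Your proof is correct and follows essentially the same route as the paper: weighted $L^2$ energy estimates with the cutoff weight give \eqref{eq:tail0}--\eqref{eq:tail1}, and an energy estimate for the difference of the two semigroups, with Young's inequality producing the $1/\eps$ and the commutator source controlled by the weighted estimate together with \eqref{eq:L2growth0} and Gr\"onwall, gives \eqref{eq:tail2}. The only immaterial difference is that you place the source $\nabla\times\big((u-u^\varphi)\times\cdot\big)$ on $B^\varphi$ and therefore invoke the weighted estimate with velocity $u^\varphi$, whereas the paper places it on $S^u_\eps(t)B_{\ini}$ and uses the weighted estimate for $u$; the two choices are symmetric and yield the same constants $C_\psi$.
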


\begin{proof}
The estimates leading to \eqref{eq:tail0} are the same as \eqref{eq:tail1}, and for \eqref{eq:tail2} are similar. For the sake of clarity we prove \eqref{eq:tail0} and \eqref{eq:tail2}. 

\medskip

\emph{Proof of \eqref{eq:tail0}.}
Let $B(t)=S^u_\eps (t)B_{\ini}$.
Thanks to Lemma \ref{lemma:wellposed}, we can multiply the equation \eqref{passive-vector} by $B \varphi^2$ and integrate to get
    \begin{equation}\label{eq:L2test}
        \frac12\frac{\dd}{\dd t}\|\varphi B\|^2_{L^2}+\eps\|\varphi\nabla B\|^2_{L^2} \leq 
        \int_{\RR^3} |u | |\nabla \varphi | |B|^2 \varphi  
        + \int_{\RR^3} |\nabla u| |\varphi B|^2  
           +  \eps  \int_{\RR^3} |\varphi \nabla B| |\nabla \varphi| |B|.
        \end{equation}
Thus, Young's inequality and the bound on $\varphi$ imply
    \begin{equation}
        \frac{\dd}{\dd t}\|\varphi B\|^2_{L^2}\leq  2\|\nabla u\|_{L^\infty}\|\varphi B\|^2_{L^2}+
        2\left(\|u\|_{L^\infty}\|\nabla \varphi\|_{L^\infty}  +\|\nabla \varphi\|_{L^\infty}^2\right)\|B\|^2_{L^2}.
    \end{equation}    
Using \eqref{eq:L2growth0}, we can integrate directly the the above inequality to get
    \begin{equation}\label{eq:L2test2}
        \|\varphi B(t)\|^2_{L^2}\leq \left[\|\varphi B_{\ini}\|^2_{L^2}+
        2t\left(\|u\|_{L^\infty}\|\nabla \varphi\|_{L^\infty} +\|\nabla \varphi\|_{L^\infty}^2\right)\| B_{\ini} \|_{L^2}^2 \right]  \exp \left(2\| \nabla u \|_{L^\infty} t\right),
    \end{equation}     
     and we conclude the proof of \eqref{eq:tail1} by the inequality $t\e^{\alpha t}\leq \alpha^{-1}\e^{2\alpha t}$.

\medskip

\emph{Proof of \eqref{eq:tail2}.}
Let $B(t)=S^u_\eps (t)B_{\ini}$ and $B_\varphi(t)=S^{u_\varphi}_\eps (t) B_{\ini}$. Their difference satisfies
\begin{equation}
\partial_t (B_\varphi - B ) = \nabla \times (u_\varphi \times (B_\varphi - B)) + \nabla \times ((u_\varphi -u) \times B) + \eps \Delta (B_\varphi - B) \,.
\end{equation}
A standard energy estimate similar to \eqref{eq:L2test}  entails
\begin{equation}
\frac12\frac{\dd}{\dd t}\|B_\varphi - B\|^2_{L^2}
+\eps \|B_\varphi - B\|^2_{\dot{H}^1}
\leq \int_{\RR^3}|\nabla u_\varphi||B_\varphi - B|^2
+\int_{\RR^3} |u_\varphi-u| | B| |\nabla (B_{\varphi} - B)|.
\end{equation}
Thus, using the identity $u_\varphi - u = u (\varphi -1 ) + \nabla \varphi \times \psi$ we deduce that
$\|\nabla u_\varphi\|_{L^\infty}\leq 2
\| \psi\|_{W^{2,\infty}}$ and
\begin{align}
\frac{\dd}{\dd t}\|B_\varphi - B\|^2_{L^2}
&\leq 2\int_{\RR^3}|\nabla u_\varphi||B_\varphi - B|^2
+\frac{1}{\eps}\int_{\RR^3} |u|^2 |(1-\varphi) B|^2
+\frac{1}{\eps}\int_{\RR^3} |\nabla\varphi\times \psi|^2 |B|^2\notag\\
&\leq 2\|\nabla u_\varphi\|_{L^\infty}\|B_\varphi - B\|^2_{L^2}
+\frac{4\|\psi\|_{W^{1,\infty}}^2}{\eps}\|((1-\varphi)+|\nabla \varphi|) B\|_{L^2}^2\notag\\
&\leq 4\| \psi\|_{W^{2,\infty}}\|B_\varphi - B\|^2_{L^2}
+\frac{4\|\psi\|_{W^{1,\infty}}^2}{\eps} \left[\|(1-\varphi) B\|_{L^2}^2+\|\nabla \varphi\|_{L^\infty}\| B\|_{L^2}^2\right].
\end{align}
Now, Lemma \ref{lemma:wellposed} and \eqref{eq:L2test2} in particular imply that
\begin{align}
    \|(1-\varphi) B\|_{L^2}^2+\|\nabla \varphi\|_{L^\infty}\| B\|_{L^2}^2
    \leq C_\psi\left[\|(1-\varphi) B_{\ini}\|^2_{L^2}+
        \|\nabla \varphi\|_{L^\infty} \| B_{\ini} \|_{L^2}^2 \right]  \exp \left(4 \| \psi\|_{W^{2,\infty}} t\right).
\end{align}
The Grönwall lemma together with the fact that 
$B_\varphi(0) - B(0)=0$ then gives
\begin{align}
\|B_\varphi(t) - B(t)\|^2_{L^2}
&\leq \frac{C_\psi}{\eps }\left[\|(1-\varphi) B_{\ini}\|^2_{L^2}+
        \|\nabla \varphi\|_{L^\infty} \| B_{\ini} \|_{L^2}^2 \right]  t\exp \left(4\| \psi\|_{ W^{2,\infty}} t\right).
\end{align}
Using that  the inequality $t\e^{\alpha t}\leq \alpha^{-1}\e^{2\alpha t}$, we conclude the proof of \eqref{eq:tail2}, and we are done.
\end{proof}

\section{Construction of the velocity field and proof of Theorem \ref{thm:main}} \label{sec:proof-thm}

To achieve the proof of Theorem \ref{thm:main}, we need to define an \emph{$\eps$-independent} velocity field $u \in W^{1, \infty} (\RR^3)$ such that there are \emph{$\eps$-independent} constants $c_0, \gamma >0$ 
 with the following property:  for any $\eps >0 $, there exists $B_{\ini}^\eps \in L^2 (\RR^3)$ satisfying
\begin{equation} \label{claim:main}
    \| S^u_{\eps} (t) B_{\ini}^\eps \|_{L^2} \geq c_0 \e^{\gamma t} \|B_{\ini }^\eps \|_{L^2}, \qquad \forall t \geq 0 .
\end{equation} 
This requires a construction involving a (periodic extension of any)  alpha-unstable velocity field  $U\in C^{\infty}(\TT^3)$ with a simple eigenvalue as given by Theorem \ref{thm:Section2}, suitably rescaled, localized and copied in the whole space. The main ideas are explained in the next section, while the proof of the main technical cornerstone (Proposition \ref{prop:main} below) is postponed to the later sections.

\subsection{Main proposition and proof of Theorem \ref{thm:main}}
Our starting point is Theorem \ref{thm:Section2}, so that we are given an  alpha-unstable  velocity field $U\in C^\infty(\TT^3)$  with a simple eigenvalue, given by a streamfunction $\Psi\in C^\infty(\TT^3)$, i.e. $U=\nabla\times\Psi$. Let $\gamma>0$ and $\zeta\in (0,1)$ be the constants devised in Theorem \ref{thm:Section2}, and set 
\begin{equation}\label{eq:UiPsii}
U_n (x) = \zeta^{n/2} U(\zeta^{-n/2}x), \qquad U_n =\nabla \times \Psi_n.
\end{equation}
In light of Theorem \ref{thm:Section2}, for any $\eps \in (\zeta^{n_\eps+1}, \zeta^{n_\eps}]$ there exists an initial condition $F^\eps\in L^2(\RR^3)$ such that

\begin{itemize}
    \item $\|F^\eps\|_{L^2(\RR^3)}=1$   and 
\begin{equation}
    \|S_\eps^{U_n} (t) F^\eps   \|_{L^2(\RR^3)} \geq  \e^{2\gamma t}, \qquad \forall t\geq0;
\end{equation}
    \item for any $\delta>0$, there exists $R>0$ such that 
\begin{equation}\label{eq:tails}
   \| F^\eps  \|_{L^2(Q_R)}^2 \geq 1-\delta  
\end{equation}
    for any $\eps \in (0,1]$.
\end{itemize}
The proof of \eqref{claim:main} not only requires a choice of $u$, but also of the initial datum $B^\eps_{\ini}$. 
The construction takes $\Psi_n$ and $F^\eps$ as the building blocks for the velocity and initial datum, respectively. We give a more precise idea of the construction of both of them below.

\medskip

\noindent \emph{\textbf{Construction of $u$.}}
The autonomous, incompressible velocity  field $u$ is defined through \eqref{eq:UiPsii} as a sum
\begin{equation}
\label{eq:AnsatzU}
u  =\sum_{n,\ell =1}^\infty\nabla \times (\Psi_n  \phi_{n,\ell}) =\sum_{n,\ell =1}^\infty u_{n,\ell}\,,
\end{equation}
where $\phi_{n, \ell}\in C^\infty_c (\RR^3)$, defined in Subsection \ref{subsection:def-cutoffs}, are suitable compactly supported cutoffs with disjoint supports with the property that there exists a ball $Q_{n,\ell} \subset \{\phi_{n,\ell}\equiv 1\}$ of sufficiently large radius.

\medskip

\noindent \emph{\textbf{Construction of $B^\eps_{\ini}$.}}
The initial datum is constructed as the sum 
\begin{equation}\label{eq:initialBeps}
B^\eps_{\ini}=\sum_{\ell=1}^\infty \ell^{-2} F^\eps_\ell 
\end{equation}
with $F^\eps_\ell$ ``concentrated'' on $Q_{n_\eps,\ell}$ and is made rigorous in Subsection \ref{subsection:def-cutoffs}.

The main task now is to properly choose $\{\phi_{n,\ell}\}_{n,\ell}, \{Q_{n,\ell}\}_{n,\ell}$. Roughly speaking, to check \eqref{claim:main} for some $\eps>0$ and some $t\geq 0$, we restrict our focus on a specific ball $Q_{n_\eps,\ell_t}$, and observe that the growth happens there. It is crucial that the constants $c_0,\gamma$ in \eqref{claim:main} are independent of the choices above. The important features of this construction are in the result below.

\begin{proposition} \label{prop:main}
There exist constants $c_0, \gamma >0$, collections of balls $\{Q_{n, \ell}\}\subset \RR^3$ and cut-offs $\{\phi_{n, \ell}\} \subset C^\infty_c (\RR^3)$ such that the velocity $u \in W^{1, \infty} (\RR^3)$ given in \eqref{eq:AnsatzU} satisfies the following properties. 

For every $\eps>0$  there exists $B^\eps_{\ini}\in L^2 (\RR^3)$ as in \eqref{eq:initialBeps}  such that if $\eps \in (\zeta^{n_\eps+1}, \zeta^{n_\eps}]$ for some $n_\eps\in\NN$ and $t \in [\ell_t, \ell_t +1)$ for some $\ell_t\in\NN$, then
    \begin{enumerate} [label=(P\arabic*), ref=(P\arabic*)]
    \item \label{1} $\displaystyle\ell_t^{-2} \|S^{U_{n_\eps}}_\eps(t)F^\eps_{\ell_t}\|_{L^2(Q_{n_\eps,\ell_t})} \geq 2 c_0 \e^{\gamma t}\|B^\eps_{\ini} \|_{L^2(\RR^3)}\,,$
    \item \label{2} $\displaystyle\ell_t^{-2} \|(S^{U_{n_\eps}}_\eps(t)-S^{u_{n_\eps,\ell_t}}_\eps(t))F^\eps_{\ell_t}\|_{L^2(Q_{n_\eps,\ell_t})} \leq \frac{c_0}{3}\|B^\eps_{\ini} \|_{L^2(\RR^3)}\,,$
    \item \label{3} $\displaystyle \ell_t^{-2} \|(S^{u_{n_\eps,\ell_t}}_\eps (t)-S^u_\eps (t))F^\eps_{\ell_t}\|_{L^2(Q_{n_\eps,\ell_t})} \leq \frac{c_0}{3}\|B^\eps_{\ini}\|_{L^2(\RR^3)}$\,,
    \item \label{4} $\displaystyle\bigg\|\sum_{\ell \neq \ell_t}\ell^{-2} S_\eps^u (t) F^\eps_{\ell}\bigg\|_{L^2(Q_{n_\eps,\ell_t})} \leq \frac{c_0}{3}\|B^\eps_{\ini} \|_{L^2(\RR^3)}$\,.
\end{enumerate}
\end{proposition}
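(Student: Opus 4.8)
\emph{Plan.} We carry out the construction sketched above, fixing the geometry once and for all and then verifying \ref{1}--\ref{4} in turn, combining the exponential growth and the uniform-in-$\eps$ mass concentration of $F^\eps$ from Theorem~\ref{thm:Section2} with the elementary and weighted energy estimates of Lemmas~\ref{lemma:wellposed}--\ref{lemma:concentrationSummary}. We first record a master constant $C_\psi>0$ dominating every constant $C(\|\cdot\|_{W^{2,\infty}})$ produced by Lemma~\ref{lemma:concentrationSummary} for the streamfunctions used below: this is legitimate because $\|\Psi_n\|_{W^{2,\infty}}\le\|\Psi\|_{W^{2,\infty}}$, the cutoffs $\phi_{n,\ell}$ will be normalized with $\|\phi_{n,\ell}\|_{W^{2,\infty}}\le C$, and the building blocks of $u$ have pairwise disjoint supports, so all the relevant streamfunctions ($\Psi_n$, $\Psi_n\phi_{n,\ell}$, and $\sum_{n,\ell}\Psi_n\phi_{n,\ell}$) have $W^{2,\infty}$ norm $\lesssim\|\Psi\|_{W^{2,\infty}}$; in particular also $\|\nabla u\|_{L^\infty}\le\Lambda$ for a fixed $\Lambda$. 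We then fix, \emph{in this order}: a constant $c_0>0$ depending only on $\gamma$ with $6c_0\,\e^{\gamma t}\le 2^{-1/2}\ell^{-2}\e^{2\gamma t}$ for all integers $\ell\ge1$ and all $t\ge\ell$ (possible since $\inf_{\ell\ge1,\,t\ge\ell}\e^{\gamma t}/\ell^2>0$); tolerances $\delta_{n,\ell}\in(0,1)$ decaying fast enough in $\ell$ and carrying a factor $\zeta^n$ that every explicit smallness threshold in the verification of \ref{1}--\ref{3} below is met, together with the radii $R_{n,\ell}:=R(\delta_{n,\ell})$ of Theorem~\ref{thm:Section2} (independent of $\eps$); transition widths $w_{n,\ell}$ and ball radii $\rho_{n,\ell}$, both $\gtrsim \zeta^{-n}\e^{3C_\psi\ell}+R_{n,\ell}+\ell$ and with $\rho_{n,\ell}\ge R_{n,\ell}+w_{n,\ell}$; cutoffs $\phi_{n,\ell}\in C^\infty_c(\RR^3)$, $0\le\phi_{n,\ell}\le1$, $\phi_{n,\ell}\equiv1$ on $Q_{n,\ell}=Q_{\rho_{n,\ell}}(c_{n,\ell})$, $\supp\phi_{n,\ell}$ in the concentric ball of radius $\rho_{n,\ell}+w_{n,\ell}$, $\|\nabla\phi_{n,\ell}\|_{W^{1,\infty}}\lesssim w_{n,\ell}^{-1}$; and finally pairwise disjoint positions with each center $c_{n,\ell}$ in the period lattice $2\pi\zeta^{n/2}\ZZ^3$ of $U_n$, whose same-level distances $d_{\ell,\ell'}:=\dist(Q_{n,\ell},Q_{n,\ell'})$ are so large that, with $Q_R$ centered at the origin (where $F^\eps$ is concentrated),
\[
\big(C_\psi\,C\,\e^{C_\psi(\max(\ell,\ell')+1)}d_{\ell,\ell'}^{-1}\big)^{1/2}\le\eta_0\,2^{-\ell-\ell'},\qquad \e^{\Lambda(\max(\ell,\ell')+1)}\,\|F^\eps\|_{L^2(\RR^3\setminus Q_{d_{\ell,\ell'}/4})}\le\eta_0\,2^{-\ell-\ell'},
\]
for a small $\eta_0>0$ to be fixed last (possible precisely because $\|F^\eps\|_{L^2(\RR^3\setminus Q_R)}\to0$ as $R\to\infty$, uniformly in $\eps$). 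Such a countable packing of $\RR^3$ plainly exists. We set $u:=\sum_{n,\ell}\nabla\times(\Psi_n\phi_{n,\ell})\in W^{1,\infty}(\RR^3)$ and, for $\eps$ with $n_\eps$ as in the statement, $F^\eps_\ell:=F^\eps(\cdot-c_{n_\eps,\ell})$, $B^\eps_\ini:=\sum_\ell\ell^{-2}F^\eps_\ell$; the triangle inequality gives $\|B^\eps_\ini\|_{L^2}\in[\,2-\pi^2/6,\ \pi^2/6\,]$, an $\eps$-independent interval.

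\emph{Properties \ref{1}, \ref{2}, \ref{3}.} For \ref{1}, covariance of \eqref{passive-vector} under lattice translations gives $S^{U_{n_\eps}}_\eps(t)F^\eps_{\ell_t}=\big(S^{U_{n_\eps}}_\eps(t)F^\eps\big)(\cdot-c_{n_\eps,\ell_t})$, of $L^2(\RR^3)$ norm $\ge\e^{2\gamma t}$. Picking $\varphi$ with $\varphi\equiv1$ on $Q_{R_{n_\eps,\ell_t}}(c_{n_\eps,\ell_t})$ and $\supp\varphi\subset Q_{n_\eps,\ell_t}$, we have $\mathbbm{1}_{\RR^3\setminus Q_{n_\eps,\ell_t}}\le(1-\varphi)^2$ pointwise, so \eqref{eq:tail1} with $u=U_{n_\eps}$ bounds the mass outside $Q_{n_\eps,\ell_t}$ by $C_\psi(2\delta_{n_\eps,\ell_t}+C\rho_{n_\eps,\ell_t}^{-1})\e^{C_\psi(\ell_t+1)}\le\tfrac12\e^{4\gamma t}$; hence $\|S^{U_{n_\eps}}_\eps(t)F^\eps_{\ell_t}\|_{L^2(Q_{n_\eps,\ell_t})}\ge2^{-1/2}\e^{2\gamma t}$, and dividing by $\ell_t^2$ and using the definition of $c_0$ and $\|B^\eps_\ini\|\le\pi^2/6$ yields \ref{1}. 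Properties \ref{2} and \ref{3} are global $L^2$ bounds. For \ref{2}, $u_{n_\eps,\ell_t}$ is exactly the field $u^\varphi$ of Lemma~\ref{lemma:concentrationSummary} with $\psi=\Psi_{n_\eps}$, $\varphi=\phi_{n_\eps,\ell_t}$, so \eqref{eq:tail2} and $\eps>\zeta^{n_\eps+1}$ bound the difference by $C_\psi\,\zeta^{-n_\eps-1}(2\delta_{n_\eps,\ell_t}+C\,w_{n_\eps,\ell_t}^{-1})\e^{C_\psi(\ell_t+1)}$; for \ref{3}, an energy estimate for $S^u_\eps(t)F^\eps_{\ell_t}-S^{u_{n_\eps,\ell_t}}_\eps(t)F^\eps_{\ell_t}$, whose forcing $\nabla\times\big((u-u_{n_\eps,\ell_t})\times S^{u_{n_\eps,\ell_t}}_\eps(t)F^\eps_{\ell_t}\big)$ lives in $\RR^3\setminus\supp\phi_{n_\eps,\ell_t}$ where the mass of $S^{u_{n_\eps,\ell_t}}_\eps(\cdot)F^\eps_{\ell_t}$ is controlled by \eqref{eq:tail1}, together with Grönwall gives a bound of the same shape and again the factor $\zeta^{-n_\eps-1}$. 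By the choice of $\delta_{n,\ell},w_{n,\ell},\rho_{n,\ell}$, all these are $\le\big(\tfrac{c_0}{3}\ell_t^2(2-\pi^2/6)\big)^2\le\big(\tfrac{c_0}{3}\ell_t^2\|B^\eps_\ini\|\big)^2$, which is \ref{2} and \ref{3}.

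\emph{Property \ref{4} — the crux.} Fix $\ell\neq\ell_t$ and split $F^\eps_\ell=\chi_\ell F^\eps_\ell+(1-\chi_\ell)F^\eps_\ell=:F'_\ell+F''_\ell$, where $\chi_\ell\in C^\infty_c$, $0\le\chi_\ell\le1$, $\chi_\ell\equiv1$ on $Q_{d_{\ell,\ell_t}/4}(c_{n_\eps,\ell})$ and $\supp\chi_\ell\subset Q_{d_{\ell,\ell_t}/2}(c_{n_\eps,\ell})$. Since $\|F''_\ell\|_{L^2}\le\|F^\eps\|_{L^2(\RR^3\setminus Q_{d_{\ell,\ell_t}/4})}$, the bound \eqref{eq:L2growth0}, $t<\ell_t+1$, and the second constraint on $d_{\ell,\ell_t}$ give $\|S^u_\eps(t)F''_\ell\|_{L^2(Q_{n_\eps,\ell_t})}\le\eta_0\,2^{-\ell-\ell_t}$. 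The support of $F'_\ell$ lies at distance $\ge d_{\ell,\ell_t}/2$ from $Q_{n_\eps,\ell_t}$; choosing $\varphi$ with $\varphi\equiv1$ on $Q_{d_{\ell,\ell_t}/2}(c_{n_\eps,\ell})\supset\supp F'_\ell$ and $\supp\varphi\subset Q_{3d_{\ell,\ell_t}/4}(c_{n_\eps,\ell})$, which is disjoint from $Q_{n_\eps,\ell_t}$, we get $\mathbbm{1}_{Q_{n_\eps,\ell_t}}\le(1-\varphi)^2$ pointwise and $\|\nabla\varphi\|_{L^\infty}\lesssim d_{\ell,\ell_t}^{-1}$, so \eqref{eq:tail1} for the glued $u$ and the first constraint on $d_{\ell,\ell_t}$ give $\|S^u_\eps(t)F'_\ell\|_{L^2(Q_{n_\eps,\ell_t})}\le\big(C_\psi\,C\,d_{\ell,\ell_t}^{-1}\e^{C_\psi(\ell_t+1)}\big)^{1/2}\le\eta_0\,2^{-\ell-\ell_t}$. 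Summing,
\[
\Big\|\sum_{\ell\neq\ell_t}\ell^{-2}S^u_\eps(t)F^\eps_\ell\Big\|_{L^2(Q_{n_\eps,\ell_t})}\le 2\eta_0\,2^{-\ell_t}\sum_{\ell\ge1}\ell^{-2}2^{-\ell}\le\frac{c_0}{3}\Big(2-\frac{\pi^2}{6}\Big)\le\frac{c_0}{3}\|B^\eps_\ini\|_{L^2},
\]
once $\eta_0$ is fixed small enough — legitimately, as $c_0$ is already fixed. This proves \ref{4}.

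\emph{Main obstacle.} The hard step is \ref{4}: each ``wrong'' piece $F^\eps_\ell$ ($\ell\neq\ell_t$) evolves into a solution growing like $\e^{2\gamma t}$ near its own ball, so its footprint on the ``right'' ball $Q_{n_\eps,\ell_t}$ is a vanishing fraction of an exponentially large quantity that must nonetheless stay bounded uniformly in $t$; since the only available localization, \eqref{eq:tail1}, degrades like $\e^{C_\psi t}$ with $C_\psi$ a priori much larger than $\gamma$, one is forced to take the inter-ball distances $d_{\ell,\ell'}$ growing at least like $\e^{C_\psi\max(\ell,\ell')}$, so that the support of $u$ is extremely sparse. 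A secondary difficulty is the factor $1/\eps$ (with $\eps$ possibly as small as $\sim\zeta^{n_\eps}$) in \eqref{eq:tail2} and in the energy estimate behind \ref{3}, absorbed by giving the concentration radii $R_{n,\ell}$, the transition widths $w_{n,\ell}$ and the ball radii $\rho_{n,\ell}$ a compensating factor $\zeta^{-n}$; the remainder is the bookkeeping of ordering the constants ($\gamma,C_\psi$, then $c_0$, then $\delta_{n,\ell}$ hence $R_{n,\ell}$, then $w_{n,\ell},\rho_{n,\ell}$, then the positions and $d_{\ell,\ell'}$, then $\eta_0$) so that none depends on a later one.
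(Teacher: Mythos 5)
Your proposal is correct and follows essentially the same strategy as the paper's proof: the exponential growth and uniform-in-$\eps$ concentration of $F^\eps$ from Theorem~\ref{thm:Section2} combined with the weighted estimates of Lemma~\ref{lemma:concentrationSummary}, with tail radii, cutoff gradients and separations made exponentially small in $\ell$ to beat the $\e^{C_\psi t}$ loss on $t\in[\ell_t,\ell_t+1)$, and a geometric factor in $n$ absorbing $\eps^{-1}\le\zeta^{-n_\eps-1}$ in \ref{2}--\ref{3} (the paper instead uses the $\e^{-(n+1)}$ weight together with $\zeta e>1$). Your variants are cosmetic — re-deriving \ref{3} by a direct energy/Gr\"onwall argument rather than applying \eqref{eq:tail2} with a fattened cutoff, proving \ref{4} by splitting each far block into a compact part plus a tail instead of weighting by $\phi_{n_\eps,\ell_t}$ as in \eqref{eq:tail0}, and (a point the paper leaves implicit, correctly made explicit by you) placing the centers on the period lattice of $U_{n_\eps}$ so the translated data inherit \eqref{eq:growthF} — the only nit being that $\eta_0$ should be fixed immediately after $c_0$, before the distances, though since its constraint involves only $c_0$ no genuine circularity arises.
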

We now prove Theorem \ref{thm:main} assuming Proposition \ref{prop:main}.

\begin{proof}[Proof of Theorem \ref{thm:main}]
Let $u \in W^{1, \infty} (\RR^3)$ be the velocity field given by Proposition \ref{prop:main} through \eqref{eq:AnsatzU}. We need to prove that there exist $c_0, \gamma >0$    such that  for any $\eps >0$ there exists $B_{\ini }^\eps \in L^2 (\RR^3)$ satisfying the growth estimate \eqref{claim:main}.

Fix $\eps>0$, choose $n_\eps \in \NN$ such that $\eps \in (\zeta^{n_\eps+1}, \zeta^{n_\eps} ]$, and let $B_{\ini}^\eps$ be the initial datum given by Proposition \ref{prop:main} via \eqref{eq:initialBeps}.  For any $t>0$, let $\ell_t \in \NN$ such that $t \in [\ell_t, \ell_t+1)$.   Then,
by the triangle inequality and linearity of $S^u_\eps (t)$ we have 
\begin{align*} 
\|S^u_\eps (t)B^\eps_{\ini}\|_{L^2(\RR^3)} & \geq \|S^u_\eps  (t)B^\eps_{\ini}\|_{L^2(Q_{n_\eps,\ell_t})} 
\\
& \geq \ell_t^{-2}\|S^u_\eps (t)F^\eps_{\ell_t}\|_{L^2(Q_{n_\eps,\ell_t})} -\bigg\|\sum_{\ell \neq \ell_t}\ell^{-2} S_\eps^u (t) F^\eps_{\ell}\bigg\|_{L^2(Q_{n_\eps,\ell_t})}
\\
& \geq \ell_t^{-2} \|S^{U_{n_\eps}}_\eps(t)F^\eps_{\ell_t}\|_{L^2(Q_{n_\eps,\ell_t})}- \ell_t^{-2} \|(S^{U_{n_\eps}}_\eps(t)-S^{u_{n_\eps,\ell_t}}_\eps(t))F^\eps_{\ell_t}\|_{L^2(Q_{n_\eps,\ell_t})}
\\
& \quad - \ell_t^{-2} \|(S^{u_{n_\eps,\ell_t}}_\eps(t)-S^u_\eps (t))F^\eps_{\ell_t}\|_{L^2(Q_{n_\eps,\ell_t})}-\bigg\|\sum_{\ell \neq \ell_t}\ell^{-2} S_\eps^u (t) F^\eps_{\ell}\bigg\|_{L^2(Q_{n_\eps,\ell_t})}.
\end{align*}
Applying Proposition  \ref{prop:main} we deduce that for any $\eps \in (\zeta^{n_\eps+1}, \zeta^{n_\eps}]$ and $t \in [\ell_t, \ell_t +1)$ it holds that
\begin{equation}
  \|S^u_\eps (t)B^\eps_{\ini}\|_{L^2(\RR^3)} 
  \geq c_0 \e^{\gamma t}\|B^\eps_{\ini} \|_{L^2(\RR^3)} \,.   
\end{equation}
Since $ t,\eps$   are arbitrary and $c_0, \gamma >0$ are independent of $\ell_t, n_\eps$ we conclude the proof.
\end{proof}

\subsection{Definition of the velocity field and the initial data} \label{subsection:def-cutoffs}
In this section, we define the velocity field and the initial data. In what follows, $\fU=\fU(\|\Psi\|_{W^{2,\infty}})\geq 10$ is a sufficiently large constant solely depending on the norms of $\Psi$, whose value will be fixed later. 

We first dwell on the construction of the sequence of cut-off functions $\phi_{n,\ell}$. For a fixed $\eps>0$, consider the initial condition $F^\eps$ given in Theorem \ref{thm:Section2}. 
For each $\ell\in \NN$,  we appeal to \eqref{eq:tails} and find $R_{n,\ell}> 1$  such that  
\begin{equation} \label{eq:condition-initial}
    R_{n,\ell}^{-1}+\| F^\eps \mathds{1}_{Q_{R_{n,\ell}}} - F^\eps \|_{L^2 (\RR^3)}^{ 2 } \leq  \frac{1}{\fU}\exp( -(n+1)- \fU(\ell+1) )  \,,
\end{equation}
uniformly in $\eps\in (0,1]$.
We then take a countable collection of compactly supported smooth functions satisfying the following properties:
\begin{subequations}\label{varphi-estimates}
    \begin{align}
        &0 \leq \phi_{n,\ell} \leq 1\,,  \qquad  
        \phi_{n,\ell} \equiv 1\quad  \text{ on } Q_{n,\ell} := Q_{2R_{n,\ell}} (x_{n,\ell})\,,\label{eq:varphi-estimatesA}
        \\
        &\dist(\supp(\phi_{n,\ell}),\supp(\phi_{n',\ell'})) \geq 2R_{n,\ell},  \qquad \text{ for all } (n,\ell) \neq (n',\ell') \,,\label{eq:varphi-estimatesB}
        \\
        &\| \nabla \phi_{n,\ell} \|_{L^\infty} + \| D^2 \phi_{n,\ell} \|_{L^\infty} \leq \frac{1}{\fU} \exp(- (n+1) - \fU (\ell+1)) \,.\label{eq:varphi-estimatesC}
    \end{align}
\end{subequations}
The existence of such collection of functions and balls is fairly easy to prove: once the radii are fixed by \eqref{eq:condition-initial}, the condition \eqref{eq:varphi-estimatesC} is imposing that the balls are well-separated, say by ten times the right-hand side of \eqref{eq:varphi-estimatesC}. It is then sufficient to fix a suitable well-separated sequence of centers $\{x_{n,\ell}\}$. Properties \eqref{eq:varphi-estimatesA}-\eqref{eq:varphi-estimatesB} follow immediately.

While we now have $u$ as in \eqref{eq:AnsatzU} at our disposal, it remains to construct the building blocks $F^\eps_\ell$ in \eqref{eq:initialBeps}.
Given $\eps>0$  and picking $n_\eps\in \NN$ such that $\eps\in (\zeta^{n_\eps+1},\zeta^{n_\eps}]$,  we define 
\begin{equation}\label{eq:Bepsindef}
F^\eps_\ell (\cdot) = F^\eps (\cdot - x_{n_\eps, \ell} ) \,.  
\end{equation}
The initial datum $B^{\eps}_{\ini}$ is then defined according to \eqref{eq:initialBeps}.

\subsection{Proof of Proposition \ref{prop:main}} 
The proof of the main Proposition \ref{prop:main} is split across several steps in the section, each one of them establishing \ref{1}-\ref{4}. We preliminarily notice that if $\fU \geq 10$, it holds that 
\begin{equation}\label{eq:Psicomparison}
    \frac{1}{C}\|\Psi\|_{W^{2,\infty}}\leq  \|\Psi_{n}\phi_{n,\ell}\|_{W^{2,\infty}},\bigg\|\sum_{n,\ell}\Psi_n \phi_{n,\ell}\bigg\|_{W^{2,\infty}}, \|\Psi_n\|_{W^{2,\infty}}  \leq C \|\Psi\|_{W^{2,\infty}},
\end{equation}
for some constant $C$ independent of  $n,\ell,\fU$.  This follows from the periodicity of $\Psi$, the fact that the rescaling \eqref{eq:UiPsii} do not increase the Lipschitz norm of $\nabla \Psi$, and the properties \eqref{varphi-estimates} of the cut-offs.

Furthermore, using that $\sum_{j=1}^\infty \frac{1}{j^2} = \frac{\pi^2}{6}$,  for any $\ell \geq 1$ and $\eps \in (0,1)$, we deduce from \eqref{eq:condition-initial} that  \begin{equation}
   \frac12 \leq \|B_{\ini}^\eps \|_{L^2(\RR^3)} \leq  2. \label{eq:ABcomparison}
\end{equation}
Indeed, by the summability of $\sum_{j \geq 1}j^{-2}$ the upper bound is elementary. To show the lower bound, note that 
\begin{equation}
\|B^\eps_{\ini}\|_{L^2(\RR^3)} \geq \|F^\eps_1\|_{L^2(Q_{n,1})}-\|\sum_{\ell \geq 2}\frac{1}{\ell^2}F^\eps_\ell\|_{L^2(Q_{n,1})}.
\end{equation}
By the condition \eqref{eq:condition-initial}, as long as $\fU$ is sufficiently large we have that $\|F^\eps_1\|_{L^2(Q_{n,1})} \geq \frac{9}{10}$. Furthermore, since $\dist(Q_{n,\ell},Q_{n,1}) \geq 2R_{n,1}$ for all $\ell \neq 1$, it holds that $\|F^\eps_\ell\|_{L^2(Q_{n,1})} \leq \|F^\eps\|_{L^2(\RR^3 \setminus Q_{R_{n,1}})} \leq \frac{1}{10}$. Hence, $\|\sum_{\ell\geq 2}\ell^{-2}F^\eps_\ell\|_{L^2(Q_{n,1})}\leq \frac{1}{10}\sum_{\ell \geq 2}\ell^{-2}<\frac{2}{5}$. Therefore, it holds that 
\begin{equation}
\|B_{\ini}^\eps\|_{L^2(\RR^3)} \geq \frac{1}{2}.
\end{equation} 
In other words, the Lipschitz (resp. $L^2$) norms of the building blocks of the velocity field (resp. initial data) are comparable. These facts will frequently be used in the following proofs below.

\begin{proof}[Proof of Proposition \ref{prop:main}]
To ease notation, we will omit some of the subscripts and write $n$ for $n_\eps$ and $\ell$ for $\ell_t$ in the proof. Also, the symbol $C_\Psi$ denotes a \emph{generic} constant depending continuously on $\|\Psi\|_{W^{2,\infty}}$, that may change within the proof.

\medskip

\emph{Proof of \ref{1}.}
Let $\tilde\varphi$ be a standard smooth cut-off such that $\tilde\varphi \equiv 1$ outside $Q_{n,\ell}$, and $\tilde\varphi \equiv 0$ on $Q_{\frac{3}{2}R_{n,\ell}}(x_{n,\ell})$, and $\|\nabla \tilde\varphi\|_{L^\infty} \leq 4 R_{n,\ell}^{-1}$. Then, we have  that
\begin{equation}\label{eq:416}
\|S^{U_n}_\eps (t)F^\eps_\ell\|_{L^2(Q_{n,\ell})} \geq \| S^{U_n}_\eps (t)F^\eps_\ell \|_{L^2 (\RR^3)} - \| \tilde\varphi S^{U_n}_\eps (t)F^\eps_\ell \|_{L^2 (\RR^3)}.
\end{equation}
In view of \eqref{eq:tail1} in Lemma \ref{lemma:concentrationSummary}, there holds 
\begin{equation}
\| \tilde\varphi S^{U_n}_\eps (t)F^\eps_\ell \|_{L^2 (\RR^3)}^2 \leq C_{\Psi_n}[\|\tilde\varphi F^\eps_\ell\|_{L^2(\RR^3)}^2+\|\nabla \tilde\varphi\|_{L^\infty}]\exp(C_{\Psi_n}t)].
\end{equation}
for some universal constants $C_{\Psi_n}$ depending only on $\|\Psi_n\|_{W^{2,\infty}}$.
However, in view of \eqref{eq:Psicomparison} and the continuous dependence of the constants on the norms of $\Psi$ as in Lemma \ref{lemma:concentrationSummary}, we may replace all the constants $C_{\Psi_n}$ by new universal constants $C_{\Psi}$ depending only on $\|\Psi\|_{W^{2,\infty}}$. Furthermore, by the choice of $R_{n,\ell}$ in \eqref{eq:condition-initial}, we have that 
\begin{equation}
   R_{n,\ell}^{-1}+\|F^\eps_\ell\|_{L^2(\RR^3 \setminus Q_{R_{n,\ell}}(x_{n,\ell}))} \leq \frac{1}{\fU}\exp(-(n+1)-\fU(\ell+1)),
\end{equation}
and since the support of $\tilde\varphi$ is contained in $\RR^3 \setminus Q_{R_{n,\ell}}(x_{n,\ell})$, it holds for any $t \in [\ell, \ell+1)$
\begin{align}
\| \tilde\varphi S^{U_n}_\eps (t)F^\eps_\ell \|_{L^2 (\RR^3)}^2 
&\leq C_{\Psi}\left[\|F^\eps_\ell\|^2_{L^2(\RR^3 \setminus Q_{R_{n,\ell}}(x_{n,\ell}))}+\frac{4}{R_{n,\ell}}\right]\exp({C_{\Psi}(\ell+1)})\notag\\
&\leq C_{\Psi}\fU^{-1}\exp((C_{\Psi}-\fU)(\ell+1)).
\end{align}
Hence, picking $\fU$ large depending only on $\|\Psi\|_{W^{2,\infty}}$, it holds by \eqref{eq:ABcomparison} that 
\begin{equation}\label{eq:421}
    \| \tilde\varphi S^{U_n}_\eps(t)F^\eps_\ell \|_{L^2(\RR^3)}  \leq \frac{1}{4} \|B_{\ini}^\eps\|_{L^2(\RR^3)} , \qquad \forall t \in [\ell,\ell+1).
\end{equation}
Furthermore, thanks to the growth estimate \eqref{eq:growthF} in Theorem \ref{thm:Section2} and \eqref{eq:growthF}
it holds that
\begin{equation}\label{eq:422}
\|S^{U_n}_\eps (t)F^\eps_\ell \|_{L^2(\RR^3)} \geq \e^{2\gamma t} \geq \frac12\e^{2\gamma t}\|B_{\ini}^\eps\|_{L^2(\RR^3)} \,.
\end{equation}
Therefore, by combining \eqref{eq:421} and \eqref{eq:422} with \eqref{eq:416}  we deduce that
\begin{equation}
\|S^{U_n}_\eps(t)F^\eps_\ell\|_{L^2(Q_{n,\ell})} 
\geq \frac12 \left(  \e^{2\gamma t} - \frac{1}{2}\right) \|B^\eps_{\ini}\|_{L^2(\RR^3)} \geq  \frac{1}{4} \e^{2\gamma t}  \|B^\eps_{\ini}\|_{L^2(\RR^3)}
\geq  \frac{ \gamma^2}{4} \ell^2  \e^{\gamma t} \|B^\eps_{\ini}\|_{L^2(\RR^3)},
\end{equation}
for all $t \in [\ell,\ell+1)$,
as we wanted.

\medskip

\emph{Proof of \ref{2}.}
The proof of \ref{2} is an application of Lemma \ref{lemma:concentrationSummary}. Indeed, note that up to translating $x_{n,\ell}$ to the origin, we can just apply \eqref{eq:tail2} in Lemma \ref{lemma:concentrationSummary} with $u=U_n$  to deduce 
\begin{equation}
\|(S^{U_n}_\eps(t) -S^{u_{n,\ell}}_\eps(t))F^\eps_\ell\|_{L^2(Q_{n,\ell})}^2\leq \frac{C_{\Psi_n}}{\eps}[\|(1-\phi_{n,\ell})F^\eps_\ell\|^2_{L^2(\RR^3)}+\|\nabla \phi_{n,\ell}\|_{W^{1,\infty}}] \exp(C_{\Psi_n}t).
\end{equation}
Once again, we may appeal to \eqref{eq:Psicomparison} to replace the constants $C_{\Psi_n}$ by constants $C_{\Psi}$ depending only on $\|\Psi\|_{W^{2,\infty}}$. Next, recall that $Q_{n,\ell} \subset \{\phi_{n,\ell} \equiv 1\}$. In particular, \eqref{eq:condition-initial} implies
\begin{equation}\label{eq:427}
\|(1-\phi_{n,\ell})F^\eps_\ell\|^2_{L^2(\RR^3)}\leq \|F^\eps_\ell\|_{L^2(\RR^3 \setminus Q_{n,\ell})}^2 
\leq \fU^{-1} \exp(-(n+1)-\fU(\ell+1)).
\end{equation}
Furthermore, by the assumption \eqref{eq:varphi-estimatesC}, we may bound 
\begin{equation}
\|\nabla \phi_{n,\ell}\|_{W^{1,\infty}} \leq \fU^{-1}\exp(-(n+1)-\fU(\ell+1)).
\end{equation}
Hence, since we assumed that $\eps \in (\zeta^{n+1},\zeta^n]$, there holds for $t \in [\ell,\ell+1]$
\begin{align*}
\|(S^{U_n}_\eps(t) -S^{u_{n,\ell}}_\eps(t))F^\eps_\ell\|_{L^2(Q_{n,\ell})}^2
\leq C_{\Psi}\fU^{-1}(\zeta e)^{-(n+1)}\exp(-(\fU-C_{\Psi})(\ell+1)).
\end{align*}
 Since  $\zeta > \frac{1}{e}$ by Theorem \ref{thm:Section2}, we obtain
\begin{equation}
\|(S^{U_n}_\eps(t) -S^{u_{n,\ell}}_\eps(t))F^\eps_\ell\|_{L^2(Q_{n,\ell})}^2 \leq C_{\Psi} \fU^{-1} \exp((C_{\Psi}-\fU)(\ell+1)).
\end{equation}
Hence, upon taking $\fU$ large enough depending only on $\|\Psi\|_{W^{2,\infty}}$, we can bound the above quantity with the help of \eqref{eq:ABcomparison} by 
\begin{equation}
\|(S^{U_n}_\eps(t)-S^{u_{n,\ell}}_\eps(t))F^\eps_\ell\|_{L^2(Q_{n,\ell})} \leq \frac{c_0}{16}  \leq \frac{c_0}{8} \| B^\eps_{\ini} \|_{L^2 (\RR^3)} ,
\end{equation}
for $t \in [\ell,\ell+1]$.

\medskip

\emph{Proof of \ref{3}.}
Recall that $u$ is given in \eqref{eq:AnsatzU}, so that we can write 
$\psi=\sum_{n',\ell'}\Psi_{n'} \phi_{n',\ell'}$ and $u=\nabla\times\psi$. 
Moreover, 
$u_{n,\ell}=\nabla \times ( \phi_{n,\ell}\Psi_{n})=\nabla \times (\tilde\phi\psi)$
where $\tilde\phi \in C_c^\infty(\RR)$ is any function so that $\supp(\tilde\phi) \cap \supp(\phi_{n,\ell'})=\emptyset$ for all $\ell'\neq \ell$, and $\tilde\phi \equiv 1$ on $\supp(\phi_{n,\ell})$. In particular, by the assumption \eqref{eq:varphi-estimatesB}, we can find such a $\tilde\phi$ that furthermore satisfies $\|\nabla \tilde\phi\|_{W^{1,
\infty}}\leq 10 R_{n,\ell}^{-1}$.
Thus, we may once again apply Lemma \ref{lemma:concentrationSummary} to obtain (replacing all constants $C_{\psi}$ by constants $C_{\Psi}$ depending only on $\|\Psi\|_{W^{2,\infty}}$ via \eqref{eq:Psicomparison}):
\begin{align}
&\|(S^{u}_\eps(t)- S^{u_{n,\ell}}_\eps(t))F^\eps_{\ell}\|_{L^2(Q_{n,\ell})}^2  
\leq \frac{C_{\Psi}}{\eps}\left[\|(1-\tilde\phi)F^\eps_{\ell}\|^2_{L^2(\RR^3)}+\|\nabla \tilde\phi\|_{W^{1,\infty}}\right]\exp(C_\Psi t).
\end{align}
As we did in \eqref{eq:427}, since $\tilde\phi \equiv 1$ on $\supp(\phi_{n,\ell})\supset Q_{n,\ell}$ as in \eqref{eq:varphi-estimatesA}, in view of \eqref{eq:condition-initial} it holds that 
\begin{equation}
\|(1-\tilde\phi)F^\eps_{\ell}\|^2_{L^2(\RR^3)}+\|\nabla \tilde\phi\|_{W^{1,\infty}}
\leq \frac{1}{\fU} \exp(-(n+1)-\fU(\ell+1)).
\end{equation}
Therefore, we see that 
\begin{equation}
\|(S^{u}_\eps(t)- S^{u_{n,\ell}}_\eps(t))F^\eps_{\ell}\|_{L^2(Q_{n,\ell})}^2  \leq C_{\Psi}\fU^{-1}(\zeta e)^{-(n+1)}\exp((C_\Psi-\fU)(\ell+1))
\end{equation}
for any $t \in [\ell,\ell+1)$. 
Hence, arguing exactly as for the previous term, we see that for all $\fU$ large enough depending only on $\|\Psi\|_{W^{2,\infty}}$, and for any time $t \in [\ell,\ell+1)$ this is bounded by 
\begin{equation}
\|(S^{u}_\eps(t)-S^{u_{n,\ell}}_\eps(t))F^\eps_{\ell}\|_{L^2(Q_{n,\ell})} \leq \frac{c_0}{8} \| B^\eps_{\ini} \|_{L^2 (\RR^3)} \,.
\end{equation}
which completes the proof.

\medskip

\emph{Proof of \ref{4}.}
Once again, this is an application of Lemma \ref{lemma:concentrationSummary}, equation \eqref{eq:tail0}. Indeed, note that 
\begin{equation}
\bigg\|\sum_{j \neq \ell} j^{-2}S^u_\eps(t)F^\eps_j\bigg\|_{L^2(Q_{n,\ell})}\leq\sum_{j \neq \ell} j^{-2}  \| \phi_{n,\ell} S^u_\eps(t)F^\eps_j\|_{L^2(\RR^3)}.
\end{equation}
From \eqref{eq:tail0} deduce that
\begin{equation}
 \| \phi_{n,\ell} S^u_\eps(t)F^\eps_j\|_{L^2(\RR^3)}^2 
 \leq C_\Psi\left[\|\phi_{n,\ell}  F^\eps_j\|^2_{L^2(\RR^3)}+
        \|\nabla \phi_{n,\ell} \|_{L^\infty}\right]  \exp \left(C_\Psi t\right).
\end{equation}
By definition of $F^\eps_j$ we now observe that $(F^\eps_j \phi_{n, \ell} )(\cdot + x_{n, j}) =F^\eps (\cdot) \phi_{n, \ell} (\cdot + x_{n,j} ) $. We also observe that $\phi_{n,\ell} (\cdot + x_{n, j}) \equiv 0$  on $Q_{R_{n, \ell}}$ for any $j \neq \ell$ by condition \eqref{eq:varphi-estimatesB}. Therefore, using \eqref{eq:condition-initial} we obtain that for $j \neq \ell$ it holds 
\begin{equation}
\|\phi_{n,\ell} F^\eps_j\|_{L^2(\RR^3)}^2 \leq \|F^\eps\|_{L^2(\RR^3\setminus Q_{R_{n} , \ell })}^2 \leq  \frac{1}{\fU}\exp( - \fU(\ell+1) ).
\end{equation}
Hence, for any time $t \in [\ell,\ell+1)$,  by the properties of the $\phi_{n,\ell}$ in \eqref{varphi-estimates} we deduce
\begin{equation}
\bigg\|\sum_{j \neq \ell} j^{-2}S^u_\eps(t)F^\eps_j\bigg\|_{L^2(Q_{n,\ell})} \leq (C_\Psi \fU^{-1})^{1/2}\sum_{j}j^{-2} \exp((C_\Psi-\fU)(\ell+1)/2)
\end{equation}
Thanks to \eqref{eq:ABcomparison}, we have that $\| B_{\ini}^\eps \|_{L^2 (\RR^3)} \geq \frac{1}{2}$. By possibly increasing that value of $\fU$, we therefore conclude that
\begin{equation}
\bigg\|\sum_{j \neq \ell} j^{-2}S^u_\eps(t)F^\eps_j\bigg\|_{L^2(Q_{n,\ell})}  \leq 
\frac{c_0}{8}\|B_{\ini}^\eps\|_{L^2(\RR^3)}
\end{equation}
for $t \in [\ell,\ell+1)$, completing the proof.
\end{proof}

\appendix

\section{Perturbation theory and a theorem of Kato} \label{appendix}

This appendix is a self-contained exposition on the elements of spectral and perturbation theory needed throughout the manuscript, following the influential monograph of T. Kato \cite{K76}, where these results are presented in full detail. We include proofs of key results that are central to our analysis in the main text, allowing us to simplify their statements for more direct applicability to the situations of interest.

Throughout this section, we will be working with a closed operator $(T,D(T))$ on a separable, complex Banach space $(X,\|\cdot\|)$, with domain $D(T)\subset X$. The resolvent set $\rho(T)$ of $T$ is defined as 
\begin{equation}
\rho(T):=\{\lambda \in \CC \vert (\lambda -T) :D(T) \to X \ \text{has a bounded inverse}\}
\end{equation}
and the spectrum $\sigma (T)$ of $T$ is the set
\begin{equation}
\sigma(T)=\CC\setminus \rho(T).
\end{equation}
It is a general fact that $\sigma(T)$ is a closed subset of $\CC$. If $\lambda \in \sigma(T)$, and there exists $x \in X\setminus \{0\}$ so that $\lambda x-Tx=0$, we say that $\lambda $ is an eigenvalue of $T$, and write $\lambda \in \sigma_p(T)$, the point spectrum of $T$.
We will be particular interested in the case of an \emph{isolated point of the spectrum} of $T$, i.e. the case where $\lambda \in \sigma(T)$, and there exists an open set $U \ni \lambda $ so that $U \cap \sigma(T)=\{\lambda \}$. In this case, we may associate to $\lambda$ an eigenprojection, known as the \emph{Riesz projector}. Let $\Gamma$ be a simple, closed curve in the complex plane surrounding $\lambda$, and so that $\Gamma \subset U$, then, we set
\begin{equation}
P_{\lambda} =\frac{1}{2\pi i} \int_{\Gamma} (\mu-T)^{-1} \dd\mu \,.
\end{equation}
We outline a number of the properties of this operator in the following lemma.
\begin{lemma}
\label{Riesz Projector}
The bounded operator $P_\lambda$ satisfies the following:
\begin{enumerate}
    \item $P_\lambda ^2 =P_\lambda$, i.e. it is a projection operator.
    \item T commutes with $P_\lambda$, and $T|_{\Ran(P_\lambda)}$ is a bounded operator that maps the range $\Ran(P_\lambda)$ into itself.
    \item $\sigma (T|_{\Ran(P_\lambda)})=\{\lambda\}$.
    \item If $\Ran(P_\lambda)$ is finite dimensional, then $\lambda$ is an eigenvalue of $T$, and $\Ran(P_\lambda)$ coincides precisely with the generalized eigenspace of $T$ at $\lambda$, i.e. $\Ran(P_\lambda)=\bigcup_{n \in \NN}\Ker(T-\lambda)^n$.
    \item If there exists $\mu \in \rho(T)$ so that $(T-\mu)^{-1}$ is a compact operator, then $\Ran(P_\lambda)$ is finite dimensional.
\end{enumerate}
\end{lemma}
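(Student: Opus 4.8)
The plan is to establish the five assertions in turn, the only real tools being the first resolvent identity $(\mu-T)^{-1}(\nu-T)^{-1}=(\nu-\mu)^{-1}[(\mu-T)^{-1}-(\nu-T)^{-1}]$ for $\mu,\nu\in\rho(T)$ with $\mu\neq\nu$, Cauchy's theorem, and winding-number computations for contours in $U$. For Property (1), I would take two simple closed curves $\Gamma_1,\Gamma_2\subset U$ encircling $\lambda$, with $\Gamma_1$ in the interior of $\Gamma_2$, and write $P_\lambda^2=\frac{1}{(2\pi i)^2}\int_{\Gamma_1}\!\int_{\Gamma_2}(\mu-T)^{-1}(\nu-T)^{-1}\,\dd\nu\,\dd\mu$. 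Substituting the resolvent identity splits this into two terms; the inner integral $\frac{1}{2\pi i}\int_{\Gamma_2}\frac{\dd\nu}{\nu-\mu}$ equals $1$ for $\mu$ inside $\Gamma_2$, while $\frac{1}{2\pi i}\int_{\Gamma_1}\frac{\dd\mu}{\nu-\mu}$ equals $0$ for $\nu$ outside $\Gamma_1$, so the whole expression collapses to $P_\lambda$.

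For Property (2), $T$ commutes with every $(\mu-T)^{-1}$ on $D(T)$, hence with $P_\lambda$. To see $\Ran(P_\lambda)\subset D(T)$ and that $T|_{\Ran(P_\lambda)}$ is bounded, I would use $T(\mu-T)^{-1}=\mu(\mu-T)^{-1}-\Id$, which is uniformly bounded for $\mu\in\Gamma$: the Riemann sums approximating $P_\lambda$ then lie in $D(T)$, their images under $T$ converge, and closedness of $T$ gives $TP_\lambda=\frac{1}{2\pi i}\int_\Gamma\mu(\mu-T)^{-1}\,\dd\mu$ (the $\Id$-term integrates to zero over the closed curve). This operator is bounded, and iterating yields $\Ran(P_\lambda)\subset D(T^n)$ for all $n$; in particular $T$ maps $\Ran(P_\lambda)$ into itself.

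For Property (3), write $M'=\Ran(P_\lambda)$ and $M''=\Ker(P_\lambda)=\Ran(\Id-P_\lambda)$, a pair of closed, $T$-invariant, complementary subspaces. For $\zeta$ in the exterior of $\Gamma$ I would set $S(\zeta)=\frac{1}{2\pi i}\int_\Gamma\frac{(\mu-T)^{-1}}{\zeta-\mu}\,\dd\mu$ and, using $(\zeta-T)(\mu-T)^{-1}=(\zeta-\mu)(\mu-T)^{-1}+\Id$ together with $\frac{1}{2\pi i}\int_\Gamma\frac{\dd\mu}{\zeta-\mu}=0$, check that $(\zeta-T)S(\zeta)=S(\zeta)(\zeta-T)=P_\lambda$; hence $S(\zeta)|_{M'}$ inverts $(\zeta-T)|_{M'}$, so $\sigma(T|_{M'})$ lies in the interior of $\Gamma$. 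Letting $\Gamma$ shrink around the isolated point $\lambda$ forces $\sigma(T|_{M'})\subset\{\lambda\}$. For the reverse inclusion when $P_\lambda\neq 0$, I would record that $\sigma(T)=\sigma(T|_{M'})\cup\sigma(T|_{M''})$ and that the symmetric argument places $\sigma(T|_{M''})$ outside a small $\Gamma$, so $\lambda\notin\sigma(T|_{M''})$; were also $\lambda\notin\sigma(T|_{M'})$, then $\lambda\in\rho(T)$, contradicting $\lambda\in\sigma(T)$.

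Finally, for Property (4): if $\dim M'<\infty$, then by (3) the finite matrix $T|_{M'}-\lambda$ has spectrum $\{0\}$, hence is nilpotent, so $\lambda$ is an eigenvalue and $M'=\Ker(T|_{M'}-\lambda)^k$ with $k=\dim M'$, giving $M'\subset\bigcup_n\Ker(T-\lambda)^n$; conversely, if $(T-\lambda)^n x=0$, decomposing $x=P_\lambda x+(\Id-P_\lambda)x$ and using that $(T-\lambda)|_{M''}$ is invertible (as $\lambda\notin\sigma(T|_{M''})$) kills the $M''$-component, so $x\in M'$. For Property (5), $P_\lambda$ commutes with $(T-\mu)^{-1}$, which therefore restricts to a compact operator on the closed subspace $M'$; since $\mu\in\rho(T)$ and $\sigma(T|_{M'})=\{\lambda\}$ does not contain $\mu$, this restriction is also boundedly invertible on $M'$, whence $\Id_{M'}$ is compact and $\dim M'<\infty$. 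The main obstacle is the bookkeeping in Property (3) --- rigorously justifying the spectral splitting $\sigma(T)=\sigma(T|_{M'})\sqcup\sigma(T|_{M''})$ for the possibly unbounded closed operator $T$ and confirming that $\sigma(T|_{M''})$ avoids $\lambda$; the remaining steps are routine manipulations with the resolvent identity and contour integrals, together with the standard fact that a boundedly invertible compact operator can only act on a finite-dimensional space.
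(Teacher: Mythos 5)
Your proposal is correct, and it is essentially the classical Riesz--Dunford contour-calculus argument: the paper itself gives no proof of this lemma, deferring entirely to Kato \cite{K76}, Chapter III, Sections 5--7, and what you write is the proof found there (idempotence via the first resolvent identity and two nested contours, commutation and boundedness of $TP_\lambda$ via Riemann sums and closedness of $T$, the auxiliary operator $S(\zeta)$ for localizing the spectrum, nilpotence of $T|_{\Ran(P_\lambda)}-\lambda$ in the finite-dimensional case, and Riesz's theorem for the compact-resolvent statement). The one step you flag as the main obstacle --- showing that $\sigma(T|_{M''})$ avoids the interior of $\Gamma$ so that the splitting $\sigma(T)=\sigma(T|_{M'})\cup\sigma(T|_{M''})$ forces $\lambda\in\sigma(T|_{M'})$ --- closes with the very same operator $S(\zeta)$: for $\zeta$ in the interior of $\Gamma$ one has $\frac{1}{2\pi i}\int_\Gamma\frac{\dd\mu}{\zeta-\mu}=-1$, hence $(\zeta-T)\bigl(-S(\zeta)\bigr)=\bigl(-S(\zeta)\bigr)(\zeta-T)=\Id-P_\lambda$ on the appropriate domains, so $(\zeta-T)|_{M''}$ is boundedly invertible for every $\zeta$ inside $\Gamma$, including $\zeta=\lambda$; the spectral splitting itself follows because $P_\lambda$ maps $D(T)$ into itself, so $D(T)=(D(T)\cap M')\oplus(D(T)\cap M'')$ and the direct sum of the two restricted inverses inverts $\zeta-T$.
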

The proof of the above Lemma \ref{Riesz Projector} may be found throughout \cite{K76}*{Chapter III, Section 5,6,7}. Using the Riesz projector, we can now say that $\lambda \in \sigma(T)$ is a \emph{simple eigenvalue} if ${\rm dim}(P_\lambda)=1$, and a \emph{semisimple} eigenvalue, if the range of $P_\lambda$ consists entirely of eigenfunctions of $T$, or equivalently  $(T-\lambda)P_\lambda =0 $.

\subsection{Riesz projectors and perturbation theory}
We now discuss how the Riesz projector can be used in our perturbative arguments. In particular, we will primarily focus on the relatively non-degenerate case, where the perturbation is, in some sense, ``at least as regular'' as the unperturbed operator. 

Given two unbounded operators $(T_0,D(T_0))$, $(T_1,D(T_1))$, we  say that $T_1$ is \emph{relatively bounded} with respect to $T_0$ if $D(T_0) \subset D(T_1)$, and there exist constants $a,b>0$ so that for all $x \in D(T_0)$, it holds 
\begin{equation}
\|T_1x\|\leq a\|x\|+b\|T_0x\|.
\end{equation}
The greatest lower bound $b_0$ of all possible values of $b$ in this definition is called the \emph{$T_0$-bound of $T_1$}.
This relative boundedness assumption significantly simplifies many technical difficulties that typically arise when dealing with perturbations of unbounded operators. In particular, it allows us to avoid complications related to the domains of the operators involved.

The following result, found in \cite{K76}*{Chapter III, Section 4},  is a fundamental consequence of this framework.
\begin{theorem}
Let $(T_0,D(T_0))$, $(T_1,D(T_1))$ be as above. Assume that 
\begin{enumerate}
    \item $(T_0,D(T_0))$ is a closed operator.
    \item $T_1$ is relatively bounded with respect to $T_0$, with $T_0$ bound strictly less than $1$. 
\end{enumerate}
Then, the operator $(T_0+T_1, D(T_0))$ is closed.
\end{theorem}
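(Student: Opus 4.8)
The plan is to establish closedness of $(T_0+T_1,D(T_0))$ directly from the sequential criterion, exploiting the relative bound to transfer convergence properties from the graph of $T_0+T_1$ back to the graph of $T_0$, and then invoking the assumed closedness of $T_0$. First I would fix constants $a,b>0$ realizing the relative bound, i.e. $\|T_1x\|\le a\|x\|+b\|T_0x\|$ for all $x\in D(T_0)$, with $b<1$; such a pair exists precisely because the $T_0$-bound of $T_1$ is strictly less than $1$. Then I would take an arbitrary sequence $\{x_n\}\subset D(T_0)$ with $x_n\to x$ in $X$ and $(T_0+T_1)x_n\to y$ in $X$, and aim to show that $x\in D(T_0)$ and $(T_0+T_1)x=y$.

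The key step is the following ``reverse'' estimate: for every $z\in D(T_0)$,
\begin{equation}
\|T_0 z\|\le \|(T_0+T_1)z\|+\|T_1 z\|\le \|(T_0+T_1)z\|+a\|z\|+b\|T_0 z\|,
\end{equation}
and therefore, since $b<1$,
\begin{equation}
\|T_0 z\|\le \frac{1}{1-b}\big(\|(T_0+T_1)z\|+a\|z\|\big),\qquad z\in D(T_0).
\end{equation}
Applying this to $z=x_n-x_m$ shows that $\{T_0 x_n\}$ is Cauchy in $X$, because $\{x_n\}$ is Cauchy (it converges) and $\{(T_0+T_1)x_n\}$ is Cauchy (it converges to $y$). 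Hence $T_0 x_n\to w$ for some $w\in X$. Since $(T_0,D(T_0))$ is closed and $x_n\to x$, we conclude $x\in D(T_0)$ and $T_0 x=w$. Next, applying the relative bound to $x_n-x$ gives $\|T_1 x_n-T_1 x\|\le a\|x_n-x\|+b\|T_0 x_n-T_0 x\|\to 0$, so $T_1 x_n\to T_1 x$, and consequently $(T_0+T_1)x_n\to T_0 x+T_1 x$. By uniqueness of limits in $X$, $(T_0+T_1)x=y$, which is exactly the closedness of $(T_0+T_1,D(T_0))$.

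I do not expect a genuine obstacle: the argument is elementary once the reverse estimate is in place, and the only point where the hypotheses are used in an essential way is the strict inequality $b<1$, which is what keeps the factor $1/(1-b)$ finite; with $b=1$ the estimate degenerates and the conclusion can fail. It is worth recording, as a byproduct of the same computation, that the graph norms of $T_0$ and of $T_0+T_1$ on $D(T_0)$ are equivalent, so that the statement can equivalently be phrased as the completeness of $D(T_0)$ under the graph norm of $T_0+T_1$.
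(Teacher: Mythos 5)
Your proof is correct and complete: the reverse estimate $\|T_0z\|\le\frac{1}{1-b}\bigl(\|(T_0+T_1)z\|+a\|z\|\bigr)$ is exactly the point where the strict bound $b<1$ enters, and the rest (Cauchyness of $\{T_0x_n\}$, closedness of $T_0$, continuity of $T_1$ in the graph norm) follows as you say. The paper does not prove this statement itself but cites Kato for it, and your argument is precisely the standard one given there — equivalence of the graph norms of $T_0$ and $T_0+T_1$ on $D(T_0)$, which you correctly record as a byproduct — so there is nothing to add.
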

Furthermore, under the relative boundedness assumption, it can be shown that for any $\mu \in \rho(T_0)$, the operator $T_1(\mu-T_0)^{-1}:X \to X$ is bounded.
With this result in place, we are now ready to state and prove a key lemma that will serve as the main tool for our perturbative arguments.

\begin{lemma}
\label{Abstract spectral Lemma}
Let $(T_0,D(T_0))$ be a closed operator, with an isolated, discrete eigenvalue $\lambda$, and let $\Gamma$ be a simple closed contour in the complex plane such that $\lambda$ is the only spectral point of $T_0$ in its interior.
Assume that $(T_1,D(T_1))$ is relatively bounded with respect to $T_0$, with $T_0$-bound strictly less than $1$, and that 
\begin{equation}\label{eq:Mbound}
\sup_{\mu \in \Gamma}\|T_1(T_0-\mu)^{-1}\| =M<1. 
\end{equation}
Define the Riesz projections 
\begin{equation}
P_0=\frac{1}{2\pi i}\int_{\Gamma}(\mu-T_0)^{-1}\dd\mu, \quad P_1=\frac{1}{2\pi i}\int_{\Gamma}(\mu-T)^{-1}\dd\mu
\end{equation}
where $T=T_0+T_1$. Then  
\begin{equation}\label{Riesz estimate}
\|P_0-P_1\|\leq \frac{1}{2\pi}|\Gamma| \frac{M}{1-M} \sup_{\mu \in \Gamma}\|(T_0-\mu)^{-1}\|
\end{equation}
In particular, if 
\begin{equation}
M<\frac{1}{1+|\Gamma|\sup_{\mu \in \Gamma}\|(T_0-\mu)^{-1}\|}, 
\end{equation}
then $T$ has an isolated eigenvalue within $\Gamma$, and ${\rm dim}(\Ran(P_0))={\rm dim}(\Ran(P_1))$. Moreover, if $\lambda$ is a simple eigenvalue of $T_0$, then $T$ also only has a simple eigenvalue in the interior of $\Gamma$.
\end{lemma}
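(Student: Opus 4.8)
The plan is to prove Lemma~\ref{Abstract spectral Lemma} via the standard Riesz-projector perturbation argument, expressing $(\mu-T)^{-1}$ as a Neumann series in $T_1(\mu-T_0)^{-1}$ along $\Gamma$. First I would observe that since $T_1$ is relatively bounded with respect to $T_0$ with $T_0$-bound strictly less than $1$, the operator $T=T_0+T_1$ is closed (by the theorem quoted just above), and for every $\mu\in\Gamma\subset\rho(T_0)$ the operator $T_1(\mu-T_0)^{-1}$ is bounded with norm at most $M<1$. Then I would write the resolvent identity
\begin{equation}
\mu-T=(\mu-T_0)-T_1=\bigl(1-T_1(\mu-T_0)^{-1}\bigr)(\mu-T_0),
\end{equation}
and since $\|T_1(\mu-T_0)^{-1}\|\le M<1$, the factor on the left is invertible by Neumann series, so $\mu\in\rho(T)$ and
\begin{equation}
(\mu-T)^{-1}=(\mu-T_0)^{-1}\sum_{n\ge 0}\bigl(T_1(\mu-T_0)^{-1}\bigr)^n.
\end{equation}
In particular $\Gamma\subset\rho(T)$, so $P_1$ is a well-defined bounded projection.

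Next I would subtract the $n=0$ term to get
\begin{equation}
(\mu-T)^{-1}-(\mu-T_0)^{-1}=(\mu-T_0)^{-1}\sum_{n\ge 1}\bigl(T_1(\mu-T_0)^{-1}\bigr)^n,
\end{equation}
take norms, sum the geometric series $\sum_{n\ge1}M^n=M/(1-M)$, and integrate over $\Gamma$ using the crude bound $\|\int_\Gamma\cdot\,\dd\mu\|\le|\Gamma|\sup_{\mu\in\Gamma}\|\cdot\|$. This directly yields the estimate \eqref{Riesz estimate}:
\begin{equation}
\|P_1-P_0\|\le\frac{1}{2\pi}|\Gamma|\,\frac{M}{1-M}\sup_{\mu\in\Gamma}\|(\mu-T_0)^{-1}\|.
\end{equation}

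For the structural conclusion, I would invoke the classical fact (Kato~\cite{K76}, and recorded implicitly via Lemma~\ref{Riesz Projector}) that two projections $P_0,P_1$ with $\|P_0-P_1\|<1$ are similar, hence have the same rank: $\dim\Ran(P_0)=\dim\Ran(P_1)$. Under the hypothesis $M<(1+|\Gamma|\sup_{\mu\in\Gamma}\|(\mu-T_0)^{-1}\|)^{-1}$, the right-hand side of \eqref{Riesz estimate} is $<\tfrac{1}{2\pi}$, and in particular $<1$, so this applies. Since $\lambda$ is an isolated discrete eigenvalue of $T_0$, $\Ran(P_0)$ is finite-dimensional, and therefore so is $\Ran(P_1)$; by Lemma~\ref{Riesz Projector}(4) applied to $T$ this forces $T$ to have an eigenvalue in the interior of $\Gamma$, with $\Ran(P_1)$ its generalized eigenspace. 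If $\dim\Ran(P_0)=1$, then $\dim\Ran(P_1)=1$, which is precisely the statement that $T$ has a single simple eigenvalue inside $\Gamma$.

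The only subtle point — and the one I would be most careful about — is justifying the rank equality of $P_0$ and $P_1$ from $\|P_0-P_1\|<1$ in the Banach-space setting (no self-adjointness available); this is the standard argument that $U=P_1P_0+(1-P_1)(1-P_0)$ is invertible when $\|P_0-P_1\|<1$ and intertwines $P_0$ and $P_1$, but I would state it explicitly rather than leave it to the reader. Everything else is routine: the Neumann-series manipulations, the geometric sum, and the contour-length bound are all mechanical once the relative-boundedness hypothesis guarantees $T_1(\mu-T_0)^{-1}$ is bounded along $\Gamma$.
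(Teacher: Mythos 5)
Your proposal is correct and follows essentially the same route as the paper's proof: the factorization $\mu-T=\bigl(1-T_1(\mu-T_0)^{-1}\bigr)(\mu-T_0)$, the Neumann series for the resolvent, and the contour-length bound yield \eqref{Riesz estimate} exactly as in the paper, and the finite-dimensionality of $\Ran(P_1)$ then gives the eigenvalue and simplicity claims. The only (inessential) difference is the rank-equality step: the paper argues directly that $P_0$ is injective on $\Ran(P_1)$ (and vice versa) from $\|P_0-P_1\|<1$, whereas you invoke the similarity operator $U=P_1P_0+(1-P_1)(1-P_0)$; both are standard and correct.
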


\begin{proof}
 We begin by noting that 
\begin{equation}
T-\mu=\left(\Id +T_1(T_0-\mu)^{-1}\right)(T_0-\mu).
\end{equation}
In view of \eqref{eq:Mbound}, $\Id +T_1(T_0-\mu)^{-1}$ is invertible and therefore 
\begin{equation}
(T-\mu)^{-1}=(T_0-\mu)^{-1}\left(\Id +T_1(T_0-\mu)^{-1}\right)^{-1}.
\end{equation}
Writing the corresponding Neumann series, we find
\begin{equation}
(T-\mu)^{-1}=(T_0-\mu)^{-1}\sum_{n \geq 0}(-1)^n\left(T_1(T_0-\mu)^{-1}\right)^n.
\end{equation}
From here, we note that for $\mu \in \Gamma$, 
\begin{align}
\left\|(T-\mu)^{-1}-(T_0-\mu)^{-1}\right\|
&\leq \left\|(T_0-\mu)^{-1}\right\| \bigg\| \sum_{n \geq 1}(T_1(T_0-\mu)^{-1})^n  \bigg\| \notag\\
&\leq \frac{M}{1-M} \sup_{\mu \in \Gamma}\left\|(T_0-\mu)^{-1}\right\|.
\end{align}
In particular, \eqref{Riesz estimate} follows.  Next, assume that $M$ is small enough so that $\|P_1-P_0\|<1$. Since $\lambda$ is a discrete spectral point of $T_0$, it follows that ${\rm dim}(\Ran(P_0))<\infty$. We claim this implies that ${\rm dim}(\Ran(P_1))<\infty$ as well. Consider the map 
\begin{equation}
P_{0}:\Ran(P_1)\to \Ran(P_{0}).
\end{equation}
Since $P_{0}$ is a projection, its range $\Ran(P_0)$ is a closed, finite-dimensional subspace. If $P_{0}$ is injective on $\Ran(P_1)$,  then it is invertible on its range by the open mapping theorem, ensuring that ${\rm dim}(\Ran(P_1))\leq {\rm dim}(\Ran(P_0))$.

To prove injectivity, assume there exists $x \in \Ran(P_1)$ such that $P_{0}x=0$. Since $P_1$ is a projection, we have $x=P_1x$, and thus
\begin{equation}
\|x\|=\|P_{0}x-P_1x\|\leq \|P_{0}-P_1\|\|x\|<\|x\|
\end{equation}
which is a contradiction unless $x=0$. Therefore, $P_0$ is injective on $\Ran(P_1)$, establishing that ${\rm dim}(\Ran(P_1))\leq {\rm dim}(\Ran(P_0))$.
Repeating the argument with $P_0$ and $P_1$ interchanged shows the reverse inequality  ${\rm dim}(\Ran(P_0))\leq {\rm dim}(\Ran(P_1))$, and thus ${\rm dim}(\Ran(P_1))= {\rm dim}(\Ran(P_0))$.

Since $P_1$ is a projection onto a finite-dimensional space, $T$ restricted to $\Ran(P_1)$ is a finite-dimensional operator (i.e.  a matrix), meaning all its spectral points are eigenvalues. This completes the proof.
\end{proof}

Finally, we state and prove a simplified version of a result in perturbation theory, tailored to obtaining unstable eigenvalues for the dynamo problem. In \cite{K76}*{Chapter VIII, Theorem 2.6}, the following result is stated under more general assumptions, requiring several additional definitions that we prefer to avoid for clarity. Instead, we present a streamlined version sufficient for our purposes.

\begin{theorem}
\label{Kato}
Let $(T_0, D(T_0))$ be a closed operator on a Banach space $X$, and let $(T_1,D(T_1))$ be another closed operator that is bounded relative to $T_0$. For sufficiently small $\kappa$, define the operator $T(\kappa)=(T_0+\kappa T_1, D(T_0))$, which is a closed operator on $X$. Let $\lambda$ be an isolated, semisimple eigenvalue of $T_0$ with finite multiplicity $m$, and let 
$P_\lambda$ be the Riesz projector onto $\lambda$. Then, the eigenvalues $\mu_j(\kappa)$ of $T(\kappa)$ near $\lambda$ satisfy the asymptotic bound 
\begin{equation}
\mu_j(\kappa)=\lambda +\kappa \mu_j+o(\kappa), \qquad j\in\{1,\dots m\}
\end{equation}
where the $\mu_j$'s are the (repeated) eigenvalues of the operator $P_\lambda T_1 P_\lambda$ restricted to $\Ran(P_\lambda)$.
\end{theorem}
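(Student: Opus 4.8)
The plan is to use the abstract perturbation lemmas of the appendix to reduce the statement to a finite-dimensional eigenvalue perturbation problem on the \emph{fixed} space $\Ran(P_\lambda)$, and then to read off the first-order term by a direct computation. First I would fix a simple closed contour $\Gamma$ enclosing $\lambda$ and no other point of $\sigma(T_0)$. Because $T_1$ is $T_0$-bounded, $\|T_1(\mu-T_0)^{-1}\|$ is bounded uniformly for $\mu\in\Gamma$, so for $|\kappa|$ small the Neumann series
\[
(\mu-T(\kappa))^{-1}=(\mu-T_0)^{-1}\sum_{n\ge 0}\kappa^{\,n}\big(T_1(\mu-T_0)^{-1}\big)^n
\]
converges in operator norm, uniformly on $\Gamma$; in particular $\kappa\mapsto(\mu-T(\kappa))^{-1}$ is holomorphic near $\kappa=0$. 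Applying Lemma~\ref{Abstract spectral Lemma} to the pair $(T_0,\kappa T_1)$ shows that for $|\kappa|$ small one has $\Gamma\subset\rho(T(\kappa))$, that the Riesz projection $P(\kappa)=\tfrac{1}{2\pi i}\int_\Gamma(\mu-T(\kappa))^{-1}\,\dd\mu$ is holomorphic in $\kappa$ with $P(0)=P_\lambda$ and $\operatorname{rank}P(\kappa)=\operatorname{rank}P_\lambda=m$, and (by Lemma~\ref{Riesz Projector}) that the eigenvalues of $T(\kappa)$ inside $\Gamma$ are exactly the $m$ eigenvalues of the matrix $T(\kappa)\big|_{\Ran P(\kappa)}$.

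Next I would pass to the fixed subspace $\Ran(P_\lambda)$ via the transformation function $U(\kappa)$ attached to the holomorphic family $P(\kappa)$, see \cite{K76}*{Chapter II, Section 4}: a holomorphic family of bounded invertible operators with $U(0)=\Id$ and $U(\kappa)P_\lambda U(\kappa)^{-1}=P(\kappa)$. Then $\widehat T(\kappa):=U(\kappa)^{-1}T(\kappa)U(\kappa)$ leaves the fixed $m$-dimensional space $\Ran(P_\lambda)$ invariant, and $\widehat T(\kappa)\big|_{\Ran P_\lambda}$ has the same spectrum near $\lambda$ as $T(\kappa)$. Writing $\widehat T(\kappa)P_\lambda=P_\lambda U(\kappa)^{-1}T(\kappa)U(\kappa)P_\lambda$ (using $U(\kappa)P_\lambda=P(\kappa)U(\kappa)$ on both sides), expanding $U(\kappa)=\Id+\kappa U'(0)+O(\kappa^2)$ and $T(\kappa)=T_0+\kappa T_1$, and using semisimplicity in the form $T_0P_\lambda=\lambda P_\lambda$, one finds that the $U'(0)$-contributions cancel at first order, leaving
\[
P_\lambda\,\widehat T(\kappa)\,P_\lambda=\lambda P_\lambda+\kappa\,P_\lambda T_1P_\lambda+O(\kappa^2)
\]
as operators on $\Ran(P_\lambda)$; here one uses $\Ran P_\lambda\subset D(T_0)\subset D(T_1)$, so that $P_\lambda T_1P_\lambda$ is a genuine $m\times m$ matrix and the relevant derivatives make sense.

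To conclude I would write $\mu_j(\kappa)=\lambda+\kappa\,\nu_j(\kappa)$; by the previous display the numbers $\nu_j(\kappa)$ are precisely the eigenvalues of the matrix $\kappa^{-1}\big(P_\lambda\widehat T(\kappa)P_\lambda-\lambda P_\lambda\big)$ on $\Ran(P_\lambda)$, and this matrix converges in norm to $P_\lambda T_1P_\lambda$ as $\kappa\to0$. Since the spectrum of an $m\times m$ matrix, viewed as an unordered list with multiplicities, depends continuously on the matrix, the multiset $\{\nu_j(\kappa)\}$ converges to $\{\mu_j\}$, the eigenvalues of $P_\lambda T_1P_\lambda$; after matching labels this is exactly $\mu_j(\kappa)=\lambda+\kappa\mu_j+o(\kappa)$. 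The only genuinely nontrivial ingredient — and hence the main obstacle — is the existence and holomorphy of the transformation function $U(\kappa)$ intertwining $P_\lambda$ with $P(\kappa)$, which I would import from \cite{K76} rather than reprove; alternatively one can dispense with $U(\kappa)$ by transporting $T(\kappa)\big|_{\Ran P(\kappa)}$ back to $\Ran P_\lambda$ through the isomorphism $P_\lambda|_{\Ran P(\kappa)}$ (invertible once $\|P(\kappa)-P_\lambda\|<1$), at the cost of tracking the resulting similarity transformation by hand. Finally, since we only claim an $o(\kappa)$ bound, any eigenvalue crossings within the cluster near $\lambda$ are harmless.
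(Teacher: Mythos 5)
Your proposal is correct, and it reaches the same key identity as the paper -- the reduced operator on $\Ran(P_\lambda)$ equals $\lambda P_\lambda+\kappa P_\lambda T_1P_\lambda$ up to higher order -- but by a genuinely different route in three places. First, the paper does not import Kato's transformation function $U(\kappa)$: it builds the intertwining operator by hand, $V(\kappa)=I-P_\lambda+P_\lambda(\kappa)P_\lambda$, which is exactly your fallback option of transporting through the near-identity map between $\Ran(P_\lambda)$ and $\Ran(P_\lambda(\kappa))$ (invertibility coming from $\|P_\lambda(\kappa)-P_\lambda\|\lesssim\kappa$ via Lemma \ref{Abstract spectral Lemma}); this keeps the proof self-contained, whereas your main route leans on \cite{K76}*{Chapter II, Section 4} as a black box. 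Second, the paper never expands the conjugated operator $U(\kappa)^{-1}T(\kappa)U(\kappa)$ directly: it expands the conjugated \emph{resolvent} $R_1(\mu;\kappa)=V(\kappa)^{-1}R(\mu;T(\kappa))V(\kappa)P_\lambda$ and recovers $V(\kappa)^{-1}T(\kappa)P_\lambda(\kappa)V(\kappa)P_\lambda$ by multiplying by $-\mu/(2\pi i)$ and integrating over $\Gamma$. This sidesteps the only delicate point in your write-up: expanding $U(\kappa)^{-1}T(\kappa)U(\kappa)$ involves an unbounded operator, and to make "the relevant derivatives make sense" you should really work with the bounded holomorphic family $T(\kappa)P(\kappa)$ (which your identity $U(\kappa)P_\lambda=P(\kappa)U(\kappa)$ lets you do), and the cancellation of the $U'(0)$-terms ultimately rests on $T_0P_\lambda=\lambda P_\lambda=P_\lambda T_0$ on $D(T_0)$ together with the fact that $U'(0)P_\lambda$ (equivalently $P'(0)P_\lambda$) maps into $D(T_0)$, plus the standard identity $P_\lambda P'(0)P_\lambda=0$; these are all true but deserve a line each, and the resolvent-plus-contour route gets them for free. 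Third, your endgame is more elementary than the paper's: having the expansion in hand, you conclude by continuity of the eigenvalues of the rescaled matrix $\kappa^{-1}(P_\lambda\widehat T(\kappa)P_\lambda-\lambda P_\lambda)$, which indeed suffices for an $o(\kappa)$ statement, whereas the paper invokes the finite-dimensional semisimple perturbation result (Theorem \ref{Thm:Kato_finite_dim}, i.e. \cite{K76}*{Chapter II, Theorem 5.4}). Your shortcut is a nice simplification that avoids that citation altogether; the paper's version gives, in exchange, the differentiability of the $\lambda$-group built into Kato's theorem.
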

The proof is based on the following result from finite-dimensional perturbation theory, which can be found in \cite{K76}*{Chapter II, Theorem 5.4}. For further details on the $\lambda$-group eigenvalues, which correspond to the eigenvalues of $T(\kappa)$ generated by the splitting of the eigenvalue $\lambda$ at $\kappa =0$, we refer to \cite{K76}*{Chapter II, Section 2}.

\begin{theorem}
\label{Thm:Kato_finite_dim}
Let $T(\kappa)$ be a family of linear operators on a finite dimensional space $M$, differentiable at $\kappa=0$. If $\lambda$ is a semisimple eigenvalue of $T(0)$, then the $\lambda$-group eigenvalues $\mu_j$ of $T(\kappa)$ are differentiable at $\kappa=0$ and satisfy the asymptotic expansion
\begin{equation}
\mu_j(\kappa)=\lambda +\kappa \mu_j^1+o(\kappa)
\end{equation}
where the $\mu_j^1$ are the repeated eigenvalues of the operator $P_\lambda T'(0)P_\lambda$ restricted to $\Ran(P_\lambda)$, and $P_\lambda$ is the Riesz projector associated with $\lambda$.
\end{theorem}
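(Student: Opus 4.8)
The plan is to follow the classical two-stage reduction used in Kato's treatment: first conjugate $T(\kappa)$ so that the $\lambda$-group lives on a \emph{fixed} finite-dimensional invariant subspace, then run an elementary root-continuity argument for the resulting matrix family. I will write $T_0 = T(0)$ and $P_0 = P_\lambda$ throughout.

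\textbf{Step 1: the Riesz projector of $T(\kappa)$.} I would fix a simple closed contour $\Gamma$ around $\lambda$ enclosing no other point of $\sigma(T_0)$. Since $\kappa \mapsto T(\kappa)$ is differentiable at $0$ and inversion is smooth on the open set of invertible operators, for $|\kappa|$ small one has $\Gamma \subset \rho(T(\kappa))$ and
\begin{equation}
P(\kappa) = \frac{1}{2\pi i}\int_\Gamma (\mu - T(\kappa))^{-1}\,\dd\mu
\end{equation}
is well defined and differentiable at $\kappa = 0$, with $P(0) = P_0$. In particular $\|P(\kappa) - P_0\| \to 0$, so $P(\kappa)$ is a projection of the same rank $m$ for small $\kappa$, its range is $T(\kappa)$-invariant, and (by the very definition of the $\lambda$-group recalled before the statement) the $\lambda$-group eigenvalues $\mu_j(\kappa)$ are exactly the eigenvalues of $T(\kappa)|_{\Ran(P(\kappa))}$.

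\textbf{Step 2: conjugation to $\Ran(P_0)$.} Next I would introduce the transformation function $V(\kappa) = P(\kappa)P_0 + (\Id - P(\kappa))(\Id - P_0)$, which is differentiable at $0$, satisfies $V(0) = \Id$ (hence is invertible for small $\kappa$), and obeys $V(\kappa)P_0 = P(\kappa)V(\kappa)$, i.e.\ $P(\kappa) = V(\kappa)P_0 V(\kappa)^{-1}$. Setting $S(\kappa) = V(\kappa)^{-1}T(\kappa)V(\kappa)$, this operator is differentiable at $0$ with $S(0) = T_0$, commutes with $P_0$ (because $T(\kappa)$ commutes with $P(\kappa)$), is similar to $T(\kappa)$, and therefore the restriction $A(\kappa) := S(\kappa)|_{\Ran(P_0)}$ is a differentiable family of endomorphisms of the \emph{fixed} $m$-dimensional space $\Ran(P_0)$ whose eigenvalues are the $\mu_j(\kappa)$. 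Semisimplicity enters twice here: it gives $A(0) = T_0|_{\Ran(P_0)} = \lambda\,\Id_{\Ran(P_0)}$, and, after differentiating $S = V^{-1}TV$ at $0$ to get $S'(0) = T'(0) + [T_0, V'(0)]$ and sandwiching with $P_0$, the identities $T_0 P_0 = P_0 T_0 = \lambda P_0$ annihilate the commutator, so $A'(0) = P_0 T'(0) P_0$ restricted to $\Ran(P_0)$, i.e.\ the operator whose eigenvalues are the claimed $\mu_j^1$.

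\textbf{Step 3: eigenvalue asymptotics of the reduced matrix.} Finally I am reduced to the purely finite-dimensional statement: if $A(\kappa) = \lambda\,\Id + \kappa B + o(\kappa)$ is a differentiable $m\times m$ family with $B = A'(0)$, its eigenvalues are $\lambda + \kappa\beta_j + o(\kappa)$ where $\beta_1,\dots,\beta_m$ are the repeated eigenvalues of $B$. For this I would look at the characteristic polynomial $p_\kappa(z) = \det(z\,\Id - (A(\kappa) - \lambda\,\Id))$ and substitute $z = \kappa w$: writing $A(\kappa) - \lambda\,\Id = \kappa(B + R(\kappa))$ with $R(\kappa) = o(1)$, one gets $p_\kappa(\kappa w) = \kappa^m \det(w\,\Id - B - R(\kappa))$. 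Since the roots of a monic polynomial depend continuously on its coefficients, the roots $w_j(\kappa)$ of $w \mapsto \det(w\,\Id - B - R(\kappa))$ converge as a multiset to the eigenvalues $\beta_j$ of $B$; hence $\mu_j(\kappa) = \lambda + \kappa w_j(\kappa) = \lambda + \kappa\beta_j + o(\kappa)$ and $\tfrac1\kappa(\mu_j(\kappa) - \lambda) \to \beta_j$, which is the asserted differentiability.

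\textbf{Expected main obstacle.} The genuinely substantive point is the bookkeeping in Step 2 — in particular that the commutator term $P_0[T_0, V'(0)]P_0$ vanishes precisely because $\lambda$ is semisimple; if $T_0|_{\Ran(P_0)}$ were a nontrivial Jordan block, $A(0)$ would not be scalar, the rescaling $z = \kappa w$ in Step 3 would fail, and one would only obtain a Puiseux (fractional-power) expansion. Everything else — differentiability of $P(\kappa)$ and $V(\kappa)$, constancy of the rank, the transformation-function identity, and the matrix root-continuity — is routine, though it should be checked carefully that only differentiability of $T(\kappa)$ \emph{at} $\kappa=0$ is used at each stage.
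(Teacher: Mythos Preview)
The paper does not actually prove this theorem; it is stated and attributed to Kato's monograph (Chapter II, Theorem 5.4), then invoked as a black box in the proof of the infinite-dimensional Theorem \ref{Kato}. Your three-step argument is a correct and self-contained proof of the result: the transformation-function conjugation in Step 2 is standard, the key cancellation $P_0[T_0,V'(0)]P_0=0$ from semisimplicity is exactly right, and the characteristic-polynomial rescaling in Step 3 is the clean way to finish.

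It is worth noting that your Step 2 closely mirrors what the paper does one level up, in its proof of Theorem \ref{Kato}: there a slightly different transformation function $V(\kappa)=I-P_\lambda+P_\lambda(\kappa)P_\lambda$ is used to conjugate the infinite-dimensional operator $T(\kappa)P_\lambda(\kappa)$ onto the fixed space $\Ran(P_\lambda)$, after which the paper appeals to the very theorem you have proved. So in effect your proposal supplies the ingredient the paper outsources to Kato, and by the same mechanism the paper itself employs for the reduction from infinite to finite dimensions.
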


We may now provide a proof of Theorem \ref{Kato}, relying on Theorem \ref{Thm:Kato_finite_dim}.

\begin{proof}[Proof of Theorem \ref{Kato}]
Let $\Gamma$ be a curve around the eigenvalue $\lambda$, and let $P_\lambda, P_\lambda(\kappa)$ denote the Riesz projectors associated with the operators $T_0, T(\kappa)=T_0+\kappa T_1$, respectively, around $\Gamma$. Denote the resolvents of $T_0$ and $T(\kappa)$ by $R(\mu;T_0) = (T_0 - \mu)^{-1}$ and $R(\mu;T(\kappa)) = (T(\kappa) - \mu)^{-1}$, respectively. We begin by considering the operator 
\begin{equation}
V(\kappa)=I-P_\lambda+P_\lambda(\kappa)P_\lambda.
\end{equation}
Note that $V(\kappa)P_\lambda=P_\lambda(\kappa)P_\lambda$ and $V(\kappa)(I-P_\lambda)=I-P_\lambda$. By Lemma \ref{Abstract spectral Lemma}, there exists some constant $c>0$ such that $\|P_\lambda(\kappa)-P_\lambda\| \leq c \kappa$, for $\kappa$ sufficiently. Consequently, for small $\kappa,$ $V(\kappa)$ is invertible, and its inverse can be written as 
\begin{equation}
V(\kappa)^{-1}=I+(P_\lambda-P_\lambda(\kappa))P_\lambda+o(\kappa)
\end{equation}
where  $o(\kappa)$ denotes any operator with norm of order $o(\kappa)$ as $\kappa \to 0$. In particular, $V(\kappa)$ is an isomorphism from $\Ran(P_\lambda) \to \Ran(P_\lambda(\kappa))$. 

Next, consider the operator 
\begin{equation}
R_1(\mu;\kappa):=V(\kappa)^{-1}R(\mu;T(\kappa))V(\kappa)P_\lambda, \qquad \text{for } \mu\in \Gamma.
\end{equation}
This operator annihilates $\Ran(I-P_\lambda)$. Since $V(\kappa)$ maps $\Ran(P_\lambda)$ into $\Ran(P_\lambda(\kappa))$  and  $R(\mu;T(\kappa))$ commutes with $P_\lambda(\kappa)$ by functional calculus, we conclude that 
$R_1(\mu;\kappa)$ maps $\Ran(P_\lambda(\kappa))$ into itself. Furthermore, by the invertibility of $V(\kappa)$, we have $R_1(\mu;\kappa)=P_\lambda R_1(\mu;\kappa)$.
We  now expand $R_1(\mu;\kappa)$ as
\begin{equation}
R_1(\mu;\kappa)=(I+(P_\lambda-P_\lambda(\kappa))P_\lambda+o(\kappa))(R(\mu;T_0)-\kappa R(\mu;T_0) T_1 R(\mu;T_0)+o(\kappa))(I-(P_\lambda-P_\lambda(\kappa))P_\lambda). 
\end{equation}
This simplifies to 
\begin{equation} \label{eq:R-1-mu-kappa}
R_1(\mu;\kappa)=R(\mu;T_0)-\kappa R(\mu;T_0)T_1R(\mu;T_0)-R(\mu;T_0)(P_\lambda-P_\lambda(\kappa))P_\lambda+(P_\lambda-P_\lambda(\kappa))P_\lambda R(\mu;T_0)+o(\kappa). 
\end{equation}
Using the fact that $R(\mu;T_0) P_\lambda = (\lambda - \mu)^{-1} P_\lambda$ 
due to the semisimplicity of  $\lambda$ for $T_0$, we get 
\begin{equation}
P_\lambda R(\mu;T_0)(P_\lambda-P_\lambda(\kappa))P_\lambda=  P_\lambda (P_\lambda-P_\lambda(\kappa))P_\lambda R(\mu;T_0) P_\lambda.
\end{equation}
Thus, using that  $R_1(\mu;\kappa)=P_\lambda R_1(\mu;\kappa)P_\lambda$ and again $R(\mu;T_0) P_\lambda = (\lambda - \mu)^{-1} P_\lambda$, we deduce from  \eqref{eq:R-1-mu-kappa} that
\begin{align*}
R_1(\mu;\kappa)=(\lambda -\mu)^{-1}P_\lambda-\kappa (\lambda-\mu)^{-2}P_\lambda T_1P_\lambda+o(\kappa) \,.
\end{align*}
Multiplying by $\frac{-\mu}{2\pi i}$ and integrating around the contour $\Gamma$, we obtain the expression
\begin{equation}
\label{eq:perturbative expression}
V(\kappa)^{-1}T(\kappa)P_\lambda(\kappa) V(\kappa)P_\lambda=\lambda P_\lambda+\kappa P_\lambda T_1P_\lambda +o(\kappa)
\end{equation}
where we have used the fact that 
\begin{equation}
T(\kappa)P_\lambda (\kappa)=-\frac{1}{2\pi i}\int_{\Gamma}R(\mu;T_0)\mu \dd\mu \,.
\end{equation}
Observe that the $\mu_j(\kappa)'s$ are the eigenvalues of $T(\kappa)P_\lambda (\kappa)$ in the $m$-dimensional space $\Ran(P_\lambda (\kappa))$. By similarity, they are also the eigenvalues of $V(\kappa)^{-1}T(\kappa)P_\lambda (\kappa)V(\kappa)$ in the $m$-dimensional space $\Ran(P_\lambda)$. Since $P_\lambda $ acts as the identity on $\Ran(P_\lambda)$, they are also equal to the eigenvalues of $V(\kappa)^{-1}T(\kappa)P_\lambda (\kappa)V(\kappa)P_\lambda $ on $\Ran(P_\lambda)$. 
Finally, applying Theorem \ref{Thm:Kato_finite_dim} and using the asymptotic expression \eqref{eq:perturbative expression}, the proof follows immediately.
\end{proof}

\addtocontents{toc}{\protect\setcounter{tocdepth}{0}}
\section*{Acknowledgments}
The research of MCZ was partially supported by the Royal Society URF\textbackslash R1\textbackslash 191492 and and the ERC/EPSRC Horizon Europe Guarantee EP/X020886/1. MS acknowledges support from the Chapman Fellowship at Imperial College London. The research of DV was funded by the Imperial College President’s PhD Scholarships.

\addtocontents{toc}{\protect\setcounter{tocdepth}{1}}

 \bibliographystyle{abbrv}
 \bibliography{DynamoBiblio.bib}

\end{document}